\documentclass[11pt]{article}

\usepackage{amsmath,amsfonts,amsthm,amscd,amssymb,graphicx,mathtools}
\usepackage{cite}
\usepackage{bbm}
\usepackage{color}
\usepackage[english=american]{csquotes}
\usepackage[colorlinks=true, pdfstartview=FitV, linkcolor=black, citecolor=black, urlcolor=black]{hyperref}
\usepackage{calc}
\usepackage{mathptmx}
\usepackage{t1enc}
\usepackage{stackrel}
\usepackage{dsfont}
\usepackage{enumerate}
\usepackage{hyperref}
\usepackage{wasysym}
\usepackage[T1]{fontenc}

\setlength{\evensidemargin}{0in} \setlength{\oddsidemargin}{0in}
\setlength{\textwidth}{6.2in} \setlength{\topmargin}{0in}
\setlength{\textheight}{8in}
\setlength{\parindent}{0em}

\newtheorem{theo}{Theorem}[section]
\newtheorem{lem}[theo]{Lemma}
\newtheorem{prop}[theo]{Proposition}

\newtheorem{example}[theo]{Example}

\newtheorem{rem}[theo]{Remark}

\DeclareMathOperator{\dist}{dist}

\DeclareMathOperator{\id}{id}

\DeclareMathOperator{\diverg}{div}
\DeclareMathOperator{\curl}{curl}
\DeclareMathOperator{\rank}{rank}

\DeclareMathOperator{\supp}{supp}

\newcommand{\T}{\mathbb{T}}

\newcommand{\N}{\mathbb{N}}
\newcommand{\R}{\mathbb{R}}

\newcommand{\Q}{\mathbb{Q}}
\newcommand{\Z}{\mathbb{Z}}

\newcommand{\eps}{\epsilon}

\newcommand{\dx}{ \mathrm{d}x}
\newcommand{\dt}{\mathrm{d}t}
\newcommand{\ds}{\mathrm{d}s}
\newcommand{\dy}{\mathrm{d}y}
\newcommand{\dz}{\mathrm{d}z}
\newcommand{\dd}{\mathrm{d}}
\newcommand{\e}{\mathrm{e}}
\newcommand{\image}{\mathrm{im}\;}

\renewcommand{\eps}{\varepsilon}
\renewcommand{\epsilon}{\varepsilon}

%notes

 \def\Xint#1{\mathchoice
 	{\XXint\displaystyle\textstyle{#1}}%
 	{\XXint\textstyle\scriptstyle{#1}}%
 	{\XXint\scriptstyle\scriptscriptstyle{#1}}%
 	{\XXint\scriptscriptstyle\scriptscriptstyle{#1}}%
 	\!\int}
 \def\XXint#1#2#3{{\setbox0=\hbox{$#1{#2#3}{\int}$ }
 		\vcenter{\hbox{$#2#3$ }}\kern-.58\wd0}}
 
 \def\dashint{\Xint-}

\numberwithin{equation}{section}
\DeclareMathAlphabet{\mathcal}{OMS}{cmsy}{m}{n}

\begin{document}
\allowdisplaybreaks
	%% TITLE MATTERS
	\title{Which measure-valued solutions of the monoatomic gas equations are generated by weak solutions?}
	
	\author{Dennis Gallenm\"uller\footnotemark[1]\ \ and Emil Wiedemann\footnotemark[2]}
	
	\date{}
	
	\maketitle
	
	\begin{abstract}
		Contrary to the incompressible case not every measure-valued solution of the compressible Euler equations can be generated by weak solutions or a vanishing viscosity sequence. In the present paper we give sufficient conditions on an admissible measure-valued solution of the isentropic Euler system to be generated by weak solutions. As one of the crucial steps we prove a characterization result for generating $\mathcal{A}$-free Young measures in terms of potential operators including uniform $L^{\infty}$-bounds. More concrete versions of our results are presented in the case of a solution consisting of two Dirac measures. We conclude by discussing also necessary conditions for generating a measure-valued solution by weak solutions or a vanishing viscosity sequence and will point out that the resulting gap mainly results from obtaining only uniform $L^p$-bounds for $1<p<\infty$ instead of $p=\infty$.		
	\end{abstract}
	
	\renewcommand{\thefootnote}{\fnsymbol{footnote}}
	
	\footnotetext[1]{Institute of Applied Analysis, Universit\"at Ulm, Helmholtzstra\ss e 18, 89081 Ulm, Germany. Email: dennis.gallenmueller@uni-ulm.de}
	
	\footnotetext[2]{Department of Mathematics, Friedrich-Alexander-Universit\"at Erlangen-N\"urnberg, Cauerstra\ss e 11, 91058 Erlangen, Germany. Email: emil.wiedemann@fau.de}
	
\section{Introduction}
	
	Weak concepts of solution are a standard tool in modern PDE theory to handle the issue of existence of solutions. Understanding the structure and the qualitative behavior of such solutions then leads at best to uniqueness. The aim of the present work is to relate the notions of distributional and measure-valued solutions for compressible fluid flows in order to obtain certain criteria for discarding unphysical solutions.\\
	In particular, we will consider the isentropic Euler system in momentum form
	\begin{equation}
		\begin{aligned}
			\partial_tm+\operatorname{div}_x\left(\frac{m\otimes m}{\rho} \right)+\nabla p(\rho)&=0,\\
			\partial_t\rho+\operatorname{div}_xm&=0\label{eq:euler}
		\end{aligned}
	\end{equation}
	over $[0,T]\times \T^d$ or $[0,T]\times \T^d$ for some fixed final time $T>0$ on the torus $\T^d$ or some open set $\Omega\subset \R^d$ in $d\geq 2$ dimensions, where the unknowns are the density $\rho$ and the momentum $m$. The pressure $p$ is predetermined as a function of $\rho$ by a constitutive relation. Throughout this paper we will only consider the choice
	\begin{align}
		p(\rho)=\rho^{\gamma}\text{ with }\gamma=1+\frac{2}{d}\label{eq:gammadefinition}
	\end{align}
	of a monoatomic gas. The reason for this lies in the equality of the energy and the generalized pressure up to multiplication with a constant in this case.\\
	A first criterion for a solution to be physical is that the total energy should be non-increasing. More concretely, we may assume that the \textit{total energy density} $e(\rho,m)=\frac{1}{\gamma-1}\rho^{\gamma}+\frac{|m|^2}{2\rho}$ satisfies the inequality
	\begin{align}
		\int\limits_{\Omega}^{}e(\rho,m)(t,x)\dx\leq \int\limits_{\Omega}^{}e(\rho_0,m_0)(x)\dx\label{eq:energyinequalitydef}
	\end{align}
	for a.e.~$t\in (0,T)$, where $(\rho_0,m_0)$ is the initial data at time $t=0$. Solutions possessing this property will be denoted as \textit{energy admissible} or simply \textit{admissible}. We may note here that it is also common to use a local energy inequality
	\begin{align*}
		\partial_te(\rho,m)+\operatorname{div}_x\left((e(\rho,m)+\rho^{\gamma})\frac{m}{\rho}\right)\leq 0.
	\end{align*}
	It turns out to be quite challenging to include the latter condition in our approach since we are not able to freely choose the energy profile of the solutions in our proofs. We thus restrict ourselves to the consideration of admissibility in the integral sense formulated in (\ref{eq:energyinequalitydef}) similarly as in the recent contribution \cite{CVY21}.
	It is still unknown if admissible distributional solutions, which we will henceforth only call admissible \textit{weak solutions}, exist for the isentropic Euler equations for any initial data. On the other hand, by the method of convex integration introduced for the incompressible Euler equations in the seminal work of De Lellis and Sz\'ekelyhidi, cf.~\cite{DLSz9,DLSz10}, it can be shown that there is initial data for the isentropic Euler system giving rise to infinitely many admissible weak solutions, see for example \cite{DLSz10} and \cite{C14}. Although weak solutions already incorporate low-regularity behavior such as shocks and turbulence effects, we will introduce the even more general notion of \textit{measure-valued solutions}. Such measure-valued solutions are weakly* measurable functions $(t,x)\mapsto \nu_{(t,x)}$ with values in the space of probability measures. So, every weak solution corresponds to a measure-valued solution consisting only of a Dirac measure. However, despite this apparently very weak behavior, measure-valued solutions enjoy the property of weak-strong uniqueness, cf.~\cite{GSGW15}. There is also strong numerical evidence supporting the use of measure-valued solutions, see~\cite{FMT}.\\
	Originally, the concept of measure-valued solution was introduced for the incompressible Euler system by DiPerna and Majda \cite{DM87} to capture oscillation and concentration effects. Note that we will only work with uniformly bounded functions, hence concentrations are not possible. The adaptation to the compressible case and the proof of existence for any integrable initial data has been carried out by Neustupa \cite{N93}. It is not hard to show that an admissible \textit{vanishing viscosity sequence} $(\rho_{\mu},m_{\mu})$, i.e.~a sequence of admissible weak solutions of the compressible Navier-Stokes system with viscosity $\mu$ tending to zero, which is uniformly bounded with density bounded away from zero and initial data independent of $\mu$, will generate an admissible measure-valued solution as $\mu$ goes to zero. Although it is not automatic from the definition that every measure-valued solution is obtained from such a sequence, one is tempted to ask if this may in fact hold true. A related question is whether every admissible measure-valued solution is generated by a sequence of admissible weak solutions of the Euler equations. Surprisingly, for the incompressible Euler system the latter is actually the case, see \cite{SW12}. However, the recent works \cite{CFKW17} and \cite{GW20} show that the isentropic Euler system admits measure-valued solutions that cannot be generated by a vanishing viscosity limit or even a sequence of weak solutions. The solutions constructed in the latter articles consist of two Dirac measures supported on weak solutions. In fact, for such solutions one can formulate a selection principle: If there exists a uniformly bounded generating vanishing viscosity sequence or sequence of weak solutions for a diatomic measure-valued solution then the underlying weak solutions have to be wave-cone connected, as stated in \cite{GW20}. One can argue that a solution which cannot be generated by a vanishing viscosity sequence should be discarded as unphysical, cf.~\cite{BTW12}. We will study in the present paper conditions under which the wave-cone connectedness of the underlying weak solutions implies indeed the existence of a generating sequence of weak solutions.\\
	The case of diatomic measures will be an application of the main objective of this paper: We give sufficient conditions for generating a measure-valued solution by weak solutions. Our first main result is the following theorem. Note that we will assume throughout the present paper that the space-dimension $d$ is strictly greater than one. 
	\begin{theo}\label{theo:weakmainresult}
		Let $T>0$. Let $\nu$ be a measure-valued solution of the isentropic Euler system with initial data $(\rho_0,m_0)\in L^{\infty}(\T^d)$. Suppose $\nu$ fulfills the following conditions:
		\begin{itemize}
			\item There exists some $\eta>0$ such that
			\begin{align*}
				\supp\left(\nu_{(t,x)}\right)\subset\left\{(\rho,m)\in\R^+\times \R^d\,:\, \rho\geq \eta\right\}
			\end{align*}
			for a.e.~$(t,x)\in (0,T)\times\T^d$.
			\item There exists some $R>0$ such that
			\begin{align*}
				\langle\tilde{\nu}_{(t,x)},f\rangle\geq Q^{R}_{\mathcal{B}_E}f(\langle\tilde{\nu}_{(t,x)},\operatorname{id}\rangle)
			\end{align*}
			for a.e.~$(t,x)\in (0,T)\times\T^d$ and all $f\in C(\R^+\times \R^d\times S_0^d\times \R^+)$.
			\item The barycenter of the lift satisfies
			\begin{align*}
				\langle\tilde{\nu},\operatorname{id}\rangle=\sigma+\mathcal{B}_Ew
			\end{align*}
			for some $w\in W^{2,\infty}((0,T)\times \T^d)$ and some $\sigma\in C([0,T]\times \T^d,\R^m)$.
		\end{itemize}
		Then $\nu$ is generated by a uniformly bounded sequence of weak solutions $(\rho_j,m_j)$ such that
		\begin{align*}
			\rho_j&\geq \tilde{\eta}\text{ for a.e.~}(t,x)\in(0,T)\times \T^d,\\
			\rho_j(0,\cdot)&\rightharpoonup \rho_0\text{ in }L^{\gamma}(\T^d),\\
			m_j(0,\cdot)&\rightharpoonup m_0\text{ in }L^{2}(\T^d)
		\end{align*}
		for some $\tilde{\eta}>0$.
	\end{theo}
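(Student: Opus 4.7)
The plan is to recast compressible Euler via the De~Lellis--Sz\'ekelyhidi lift as a linear PDE in enlarged variables $(\rho,m,U,q)\in\R^+\times\R^d\times S_0^d\times\R^+$, subject to the linear balance laws
\begin{equation*}
\partial_t\rho+\diverg_x m=0,\qquad \partial_t m+\diverg_x U+\nabla q=0,
\end{equation*}
and to the pointwise nonlinear constraint $(U,q)=F(\rho,m)$ that singles out genuine weak solutions.  The linearized system is exactly the range of the potential operator $\mathcal{B}_E$, and the choice $\gamma=1+2/d$ turns the total energy into the linear functional $\tfrac{d}{2}q$, so admissibility becomes linear in the lift.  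The measure-valued solution $\nu$ lifts to a Young measure $\tilde\nu$ on the enlarged state space, supported on the graph of $F$, which by the density bound $\eta$ sits inside a bounded set on which $F$ is Lipschitz.

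I then apply the $L^\infty$-variant of the Fonseca--M\"uller characterization (the paper's central abstract result, as advertised in the abstract) to $\tilde\nu$.  The Jensen inequality against the $Q^R_{\mathcal{B}_E}$-envelope and the barycentric decomposition $\langle\tilde\nu,\mathrm{id}\rangle=\sigma+\mathcal{B}_E w$ are precisely the analytic hypotheses this characterization requires, so it produces a uniformly bounded sequence of fields $F_j=\sigma+\mathcal{B}_E w_j\in L^\infty$ that generates $\tilde\nu$ with the correct barycenter.  Writing $F_j=(\rho_j,m_j,U_j,q_j)$ and exploiting that $\tilde\nu$ is concentrated on the graph of $F$, the defects $U_j-F^U(\rho_j,m_j)$ and $q_j-F^q(\rho_j,m_j)$ converge to zero strongly in every $L^p$, $p<\infty$.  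Each $F_j$ is therefore an asymptotic Euler subsolution, and a standard local convex-integration correction in a thin $L^\infty$-neighborhood, in the spirit of De~Lellis--Sz\'ekelyhidi, upgrades $(\rho_j,m_j)$ to a genuine weak solution of \eqref{eq:euler} with the same generated Young measure.  The lower bound $\tilde\eta$ on the density is inherited from $\supp\tilde\nu$ because the convex-integration correction can be confined to a neighborhood so small that it preserves, say, $\rho_j\geq\eta/2$.

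Initial data convergence is handled by choosing the potentials $w_j$ to vanish in a thin temporal layer $t\in[0,\delta_j)$ with $\delta_j\downarrow 0$, so that the traces at $t=0$ are governed by $\sigma(0,\cdot)$, which projects onto $(\rho_0,m_0)$ up to a boundary contribution from $\mathcal{B}_E w$ that can be absorbed without disturbing the weak $L^\gamma\times L^2$ convergence.  The principal obstacle is the compatibility of the abstract Young-measure characterization with the nonlinear convex-integration correction: one must simultaneously preserve the $L^\infty$-bound, the density lower bound, and the prescribed continuous shift $\sigma$, while maintaining Young-measure convergence against arbitrary $f\in C(\R^+\times\R^d\times S_0^d\times\R^+)$.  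A further subtlety is that $\sigma$ need not itself lie in the image of $\mathcal{B}_E$, so the subsolutions are genuinely shifted by a continuous piece and the convex-integration step must be carried out relative to a $\sigma$-dependent target set; this is feasible precisely because $\sigma$ is continuous and hence locally essentially constant on the small scales at which the oscillatory correction operates.
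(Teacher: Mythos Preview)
Your overall architecture matches the paper's, but there is a genuine gap at the transition from the Fonseca--M\"uller step to convex integration. The characterization theorem gives you a sequence $F_j=\sigma+\mathcal{B}_E w_j$ generating $\tilde\nu$, and you correctly observe that the defects $U_j-F^U(\rho_j,m_j)$ and $q_j-F^q(\rho_j,m_j)$ go to zero in every $L^p$, $p<\infty$. But the compressible convex-integration scheme (Theorem~\ref{theo:EJT}) requires the \emph{pointwise} strict inequality
\[
q_j(t,x) > p(\rho_j(t,x)) + \tfrac{2}{d}\,e_{\mathrm{kin}}(\rho_j,m_j,U_j)(t,x)
\]
everywhere on $(0,T)\times\T^d$; an ``asymptotic subsolution'' in $L^p$ is not enough, and there is no ``standard local correction in a thin $L^\infty$-neighborhood'' that closes this gap, precisely because you only have $L^p$ and not $L^\infty$ smallness of the defect. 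In the incompressible case the $\Lambda$-convex hull is the full phase space, so this issue does not arise; here it is the heart of the matter.

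The paper supplies the missing ingredient: a M\"uller--Zhang type truncation (from \cite{G20}, applied in Proposition~\ref{theo:stepone}) that takes the $W^{2,\infty}$-bounded potentials $u_j+w$ and modifies them on small sets to produce new potentials $g_j$ with $\mathrm{dist}(\sigma+\mathcal{B}_E g_j(t,x),K_{(t,x)})\to 0$ \emph{uniformly} in $(t,x)$, while still generating $\tilde\nu$. This step uses crucially that $\mathcal{B}_E$ has order two and that the generating sequence is $L^\infty$-bounded. Only after this truncation (followed by mollification and a small upward shift of the $Q$-component) does one obtain genuine strict subsolutions to which Theorem~\ref{theo:EJT} applies. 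Your proposal also slightly misdescribes the density bound: in the compressible scheme the density $\rho_j$ is not perturbed at all by convex integration, so $\rho_j\geq\tilde\eta$ is inherited directly from the truncated subsolution, not from confining the correction to a small neighborhood.
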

	The notation we used here is introduced in Sections \ref{sect:preliminaries} and \ref{sect:fonsecamueller}.	In words our result means that essentially three conditions are sufficient for generating a measure-valued solution by weak solutions:\\
	First, we need to exclude vacuum and bound our density away from zero. Secondly, the \textit{lifted measure} $\tilde{\nu}$ satisfies a Jensen-type inequality involving the \textit{truncated quasiconvex envelope} $Q_{\mathcal{B}_E}^R$ with truncation threshold $R>0$, where $\mathcal{B}_E$ is the potential operator, constructed in \cite{G20}, associated to the relaxed Euler system, which we rewrite also as a linear homogeneous differential operator $\mathcal{A}_E$. Here we use the framework of $\mathcal{A}$-free Young measures introduced by Murat and Tartar, see e.g.~\cite{M78,T79}. This kind of Jensen inequality is inspired by the fundamental papers \cite{KP91,KP94} of Kinderlehrer and Pedregal, where this condition represents the main characterizing feature of a Young measure to be generated by a sequence with a certain structure, in their case a sequence of gradients. This involves so-called \textit{quasiconvex} functions. One may consult \cite{Mueller99} for an overview on quasiconvexity and gradient Young measures. The more general concept of $\mathcal{A}$\textit{-quasiconvexity} has been introduced by Dacorogna \cite{D82} and developed further by Fonseca and Müller \cite{FM99}. In our situation, the Jensen condition is actually the main ingredient to ensure the existence of an $L^{\infty}$-bounded sequence of \textit{subsolutions}, i.e.~solutions to the linearly relaxed Euler system. Note that we consider only oscillation Young measures due to the overall uniform bounds on the generating sequences. For more on $\mathcal{A}$-free generability of Young measures including also concentrations see e.g.~\cite{A19,ADR20,AS21,KR19}. The third condition in Theorem~\ref{theo:weakmainresult} that the barycenter of the lift is in potential form modulo an additive continuous function is quite natural, since the barycenter is a subsolution by assumption. Note that the assumptions associated to uniform $L^{\infty}$-bounds, namely the truncation in the quasiconvex envelope and the potential of the barycenter lying in $W^{2,\infty}$, are the only unnatural conditions, as we will see when we discuss necessary conditions, cf.~Remark~\ref{rem:comparison}. However, these uniform bounds are needed for technical reasons in the proof.\\	
	If we consider energy admissible measure-valued solutions, we are able to generate them by likewise admissible weak solutions. This is our second main result.
	\begin{theo}\label{theo:mainresult}
		Let $T>0$. Let $\nu$ be an admissible measure-valued solution of the isentropic Euler system with initial data $(\rho_0,m_0)\in L^{\infty}(\T^d)$. Suppose $\nu$ fulfills the same conditions as in Theorem \ref{theo:weakmainresult} and additionally:
		\begin{itemize}
			\item It holds that
			\begin{align*}
				\supp\left(\tilde{\nu}_{(t,x)}\right)\subset\left\{(\rho,m,M,Q)\,:\, \rho\geq \eta\text{ and }Q=\frac{2}{d}\langle\nu_{(t,x)},e\rangle\right\}
			\end{align*}
			for a.e.~$(t,x)\in (0,T)\times\T^d$.
			\item The map $(t,x)\mapsto \langle\nu_{(t,x)},e\rangle$ is continuous on $[0,T]\times \T^d$.
		\end{itemize}
		Then $\nu$ is generated by a uniformly bounded sequence of admissible weak solutions $(\rho_j,m_j)$ such that for all $j\in\N$ it holds that
		\begin{align*}
			\|\rho_j(t=0)-\rho_0\|_{L^{\gamma}(\T^d)}&\leq \frac{1}{j},\\
			\|m_j(t=0)-m_0\|_{L^{2}(\T^d)}&\leq \frac{1}{j},\\
			\rho_j&\geq \tilde{\eta}\text{ for a.e.~}(t,x)\in(0,T)\times \T^d
		\end{align*}
		for some $\tilde{\eta}>0$.
	\end{theo}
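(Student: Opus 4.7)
The strategy is to bootstrap from Theorem~\ref{theo:weakmainresult}, viewing the extra hypotheses in Theorem~\ref{theo:mainresult} as precisely what is needed to upgrade the generating sequence to an admissible one with quantitative initial data convergence. As a first step I would apply Theorem~\ref{theo:weakmainresult} to produce a uniformly bounded generating sequence $(\rho_j, m_j)$ of weak solutions with $\rho_j \geq \tilde\eta$. The remaining task is then to show that, after a suitable refinement of the construction, each member of this sequence satisfies the admissibility inequality~\eqref{eq:energyinequalitydef} and that the initial data converge strongly at rate $1/j$.

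The monoatomic identity $\tfrac{1}{d}\operatorname{tr}(m\otimes m/\rho) + \rho^\gamma = \tfrac{2}{d} e(\rho, m)$ (valid thanks to $\gamma = 1 + 2/d$) is the heart of the argument. In the natural lift, the component $Q$ reduces exactly to this quantity along weak solutions, so the assumed support condition $Q = \tfrac{2}{d}\langle \nu, e\rangle$ rigidly pins the $Q$-marginal of $\tilde\nu_{(t,x)}$ to a single value. Young measure convergence of $(\rho_j, m_j, M_j, Q_j)$ to $\tilde\nu$, together with the uniform $L^\infty$-bound, then yields strong convergence $Q_j \to \tfrac{2}{d}\langle\nu, e\rangle$ in every $L^p$ with $p<\infty$, and hence $e(\rho_j, m_j) \to \langle\nu, e\rangle$ strongly in $L^1$. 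Using the assumed continuity of $(t,x)\mapsto\langle\nu_{(t,x)}, e\rangle$, I would then deduce convergence of the spatial energy integrals $\int_{\T^d} e(\rho_j(t), m_j(t))\,\dx \to \int_{\T^d}\langle\nu_t, e\rangle\,\dx$ at almost every $t$, and, via admissibility of $\nu$, that the limit is bounded by $\int e(\rho_0, m_0)\,\dx$.

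To convert this limiting inequality into a per-$j$ statement with the correct right-hand side $\int e(\rho_j(0), m_j(0))\,\dx$, I would introduce small positive parameters $\kappa_j \downarrow 0$ into the subsolution construction, prescribing slightly inflated energy profiles so that the convex-integration step in the proof of Theorem~\ref{theo:weakmainresult} produces weak solutions whose kinetic-plus-internal energy is majorized by $\langle\nu_t, e\rangle + \kappa_j$ for $t > 0$ while remaining above $\langle\nu_0, e\rangle - \kappa_j$ at $t = 0$. Continuity of the energy profile at $t=0$ together with admissibility of $\nu$ then close the loop, and a diagonal extraction yields admissibility of each $(\rho_j, m_j)$. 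The quantitative initial data bounds at rate $1/j$ would be obtained by a further diagonalization: I would approximate $(\rho_0, m_0) \in L^\gamma \times L^2$ by a sequence of $L^\infty$-data for which strong initial convergence is automatic from the uniform bound combined with Young measure generation, and then select diagonally in the approximation parameter and $j$. The main obstacle is keeping the three simultaneous requirements --- the convex-integration construction inherited from Theorem~\ref{theo:weakmainresult}, the precise prescription of the energy profile, and the quantitative initial-data approximation --- mutually compatible with the standing hypothesis class (truncated Jensen inequality for $\tilde\nu$, potential form of the barycenter), and I would expect this coordination to be the most delicate part of the argument.
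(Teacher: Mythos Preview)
Your outline correctly identifies the mechanism by which the extra hypotheses enter --- the support constraint $Q=\tfrac{2}{d}\langle\nu,e\rangle$ pins the $Q$-marginal, so $e(\rho_j,m_j)\to\langle\nu,e\rangle$ strongly --- but there is a genuine gap in how you propose to obtain \emph{exact} admissibility for each fixed $j$. Your scheme gives, at best,
\[
\int_{\T^d} e(\rho_j(t),m_j(t))\,\dx \;\leq\; \int_{\T^d}\langle\nu_{t},e\rangle\,\dx + \kappa_j
\quad\text{and}\quad
\int_{\T^d} e(\rho_j(0),m_j(0))\,\dx \;\geq\; \int_{\T^d}\langle\nu_{0},e\rangle\,\dx - \kappa_j,
\]
which together with admissibility of $\nu$ yields only $\int e(\rho_j(t),m_j(t))\leq \int e(\rho_j(0),m_j(0))+2\kappa_j$. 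No diagonal extraction removes this defect: when $\int\langle\nu_t,e\rangle=\int\langle\nu_0,e\rangle$ (which is allowed), the inequality is tight and the $\kappa_j$-slack sits on the wrong side. The problem is structural: after convex integration the solution inherits $m(0)=m'(0)$ from the subsolution, so $e(\rho_j(0),m_j(0))$ is determined by the subsolution's initial data and there is no reason for it to equal, or even exceed, $\tfrac{d}{2}\int Q_j(0)$.

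The paper closes this gap by a step you do not mention: before applying convex integration, it perturbs the subsolution's $(m,M)$-components on a small time layer $[0,\epsilon)$ via the perturbation property of the convex-integration scheme (Proposition~\ref{timezeroperturbation}) so that at $t=0$ the energy is \emph{saturated}, i.e.\ $e(\rho_j(0),m_j(0))=\tfrac{d}{2}Q_j(0)$ almost everywhere in $x$. Simultaneously it adjusts $Q_j$ by a spatial-mean correction so that $\int Q_j(t)\leq\int Q_j(0)$ exactly for all $t$ (Proposition~\ref{prop:steptwo}). With these two modifications the admissibility inequality becomes an identity-plus-monotonicity and holds exactly for every $j$. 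This $t=0$ saturation step is the missing key idea; the rest of your outline (strong energy convergence from the concentrated $Q$-marginal, strong convergence of initial data) is essentially what the paper does in Propositions~\ref{theo:stepone} and~\ref{prop:stepthree}.
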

	\begin{rem}
		Compared to Theorem \ref{theo:weakmainresult} we needed to additionally assume the quite restrictive conditions that the generalized pressure component $Q$ of a point in the support of the lift equals the energy density of the measure and that the energy density is continuous.\\
		Note that it is straightforward to check that we could formulate the concentration condition on the generalized pressure for the support of the lifted measure $\supp\left(\tilde{\nu}_{(t,x)} \right)\subset \left\{(\rho,m,M,Q)\,:\,Q=\frac{2}{d}\langle\nu_{(t,x)},e\rangle \right\}$ equivalently only in terms of the original measure $\nu$ as a concentration in the energy
		\begin{align*}
			\supp(\nu_{(t,x)})\subset \left\{(\rho,m)\,:\, e(\rho,m)= C_{\nu_{(t,x)}} \right\}
		\end{align*}
		for some fixed constant $C_{\nu_{(t,x)}}$ depending only on the measure $\nu_{(t,x)}$ at the point $(t,x)$. In this case, clearly $C_{\nu_{(t,x)}}=\langle\nu_{(t,x)},e\rangle$ holds. 
	\end{rem}
	Let us give a brief sketch of the proof of our main results:\\
	In the first step we obtain a sequence of functions in potential form with uniformly bounded second order gradients, which generates the lifted measure. To achieve this, we will prove a characterization result in the spirit of Fonseca and M\"uller \cite{FM99}, cf.~Theorem~\ref{theo:fromsubsolntosequence}. Our specific characterization result stems from the particular definition of the truncated quasiconvex envelope. The resulting sequence of potentials converges only in measure to a specific convex set $K$, which we will need later for applying the convex integration. The second step then improves this convergence in measure to uniform convergence to $K$ by a truncation result from \cite{G20}. The latter truncation method is a technique developed from previous results of Zhang \cite{Z92} and Müller \cite{M99}.	Crucial ingredients at this point are the $L^{\infty}$-boundedness of the second order gradients of the generating sequence and the fact that the potential $\mathcal{B}_E$ is of order two. In the third step we mollify our sequence of subsolutions, which will provide us with the required regularity for the convex integration scheme. As a fourth step we modify the generating sequence at small times by using the perturbation property of convex integration, see Proposition \ref{timezeroperturbation} below, in order to guarantee energy admissibility. Here, the specific form of the support of the lifted measure comes into play. The last step then uses convex integration to transition from the sequence of subsolutions generating the lifted measure to a sequence of weak solutions generating the measure-valued solution. For the latter we will make use of a compressible version of convex integration from \cite{EJT20}, see Theorem \ref{theo:EJT} below, which is a generalization to non-constant energies of the results in \cite{M20}. Note that the only parts in which the particular form of the Euler equations plays a role are this last convex integration step and the step using the perturbation property. The assumption of the dimension being greater than one is used exactly here, since convex integration needs more than one space dimension.\\
	\begin{rem}\label{rem:constantrank}
		Note that we do not need an explicit constant rank condition here, since we already assume to work on the level of potentials with uniform bounds on the highest order gradients. Nevertheless, for coming up with examples it will be important to show that $\mathcal{A}_E$ and $\mathcal{B}_E$ have constant rank, cf.~Section~\ref{sect:application}. For $\mathcal{A}_E$, this is straightforward to check, see e.g.~Lemma~1 in \cite{CFKW17} for $d=3$. In the case $d=2$ considered in Section~\ref{sect:application}, we also show that $\mathcal{B}_E$ has constant rank, see Lemma~\ref{lem:eulerpotentialproperties}, below. For higher dimensions though, one needs to compute the operator $\mathcal{B}_E$ first, which for $d=3$ already consists of a $(36\times 10)$-matrix that has to be determined from the implicit algorithm given in \cite{G20}. Hence, a proof of the constant rank property of $\mathcal{B}_E$ for dimensions $d\geq 3$ is still missing, although we presume that the assertion is correct.\\
		Note, however, that constant rank conditions are not necessary for the study of differential constraints, see e.g.~\cite{Mue99}.
	\end{rem}
	\begin{rem}
		Since the Young measure $\nu$ is generated in both Theorems \ref{theo:weakmainresult} and \ref{theo:mainresult} by a uniformly bounded sequence of functions, standard Young measure theory implies that a posteriori the support of $\nu$ is compact. This observation applies of course also to all subsequent results.\\
		In the previous sketch of the proof we have seen that the truncation method from \cite{G20} is crucial. For this a uniform bound on the second order gradients $(D^2u_j)$ of the generating sequence $(u_j)$ is required. Moreover, one also obtains from this truncation a sequence $(g_j)$ with the same structure. It follows that the resulting sequence $(\mathcal{B}_Eg_j)$ is again uniformly bounded, which therefore implies that all Young measures considered had to have compact support. Note that the uniform bounds simplify the problem significantly and might therefore be too restrictive. However, going beyond uniformly bounded sequences seems to be quite difficult, see also Remark~\ref{rem:comparison}, below.\\
		In the recent work \cite{BGS21} also other $L^{\infty}$-truncation techniques for $\mathcal{A}_E$ have been considered, which do not require any a priori bounds on the gradients. However, unfortunately the resulting sequence from the latter truncation scheme is only uniformly bounded, but one does not obtain information about convergence to an arbitrarily fixed convex set $K$ as we need it for the application of convex integration. Hence, we cannot simply overcome the overall assumption of uniformly bounded highest order gradients. For more on $L^{\infty}$-truncation of divergence-free sequences see also \cite{S22}.\\
		There is also a slightly different way of proving the Müller-Zhang truncation used in our proof. For that one first shows the truncation result for the second order gradient $u\mapsto D^2u$ without uniform bounds on $\|D^2u_j\|$ by following Müller's approach in \cite{M99} more directly. This is possible, since $u\mapsto D^2u$ already covers all directions. By multiplying with a suitable coefficient matrix $B_E$ one obtains $\mathcal{B}_E = B_E\cdot D^2$. Hence, one might wonder if this implies the truncation result for $\mathcal{B}_E$ without uniform bounds. However, for the transition from $(D^2u_j)$ to $(\mathcal{B}_Eu_j)$, again, we need the uniform boundedness of $(D^2u_j)$, see Remark~3.4 in \cite{G20}.
	\end{rem}
	We will also give necessary conditions for generating measure-valued solutions by either weak solutions or a vanishing viscosity sequence. Previously, Chiodaroli et al.~\cite{CFKW17} already gave a result with such necessary conditions, mainly discussing a related Jensen-type inequality. In the present work we adapt this to our more specific situation of $L^{\infty}$-bounded functions and develop the improvements resulting from these bounds. Comparing the necessary conditions with the sufficient conditions from Theorem~\ref{theo:weakmainresult} we observe that the emerging gap basically lies in the requirement of $L^{\infty}$-bounds for sufficiency and the lack of those in the necessity, where only $L^p$-bounds for $1<p<\infty$ can be obtained.\\
	Let us conclude this introduction by briefly giving some comments on the difficulties that occurred in our investigations for the compressible case contrasting the incompressible case. The convex integration method for the compressible Euler equations yields the equality of the energy density and the generalized pressure $Q$, which itself is a part of the subsolution. This restriction does not exist in the incompressible case. Thus, we are not able to freely choose the energy distribution of our solutions which also led us to the use of the total energy inequality as admissibility criterion instead of the local energy inequality. Note also that the associated wave-cone is much smaller in the compressible case. This then corresponds to a non-trivial Jensen-type condition. Moreover, the $\Lambda$-convex hull $K$, where our subsolutions have to be contained in for the convex integration, is more complicated. While in the incompressible case, the $\Lambda$-convex hull of the constitutive set is the whole phase space, in the isentropic situation the states in the hull have to satisfy 
	\begin{equation*}
		Q\geq p(\rho)+\frac{2}{d}e_{kin}(\rho,m,M),
	\end{equation*}
	(see Sections~\ref{sect:preliminaries} and~\ref{sect:proofofthemainresults} for the notation), and in order to guarantee that the subsolutions have this property almost everywhere, the above mentioned truncation techniques had to be applied. \\
	The outline of the paper is as follows: In Section \ref{sect:preliminaries} we present the notation and some preliminaries on homogeneous differential operators needed throughout the proofs. Section \ref{sect:fonsecamueller} is dedicated to the proof of the characterization result for generating Young measures by sequences of potentials with $L^{\infty}$-bounded highest order gradients. Here, we also introduce and discuss some properties of the truncated quasiconvex envelope. In Section~\ref{sect:weakandmvs} we give the definitions of weak and measure-valued solutions and provide some preliminary results on those. The main part of the paper is then contained in Section \ref{sect:proofofthemainresults} where we give a proof of the main results Theorem~\ref{theo:weakmainresult} and Theorem~\ref{theo:mainresult}. The application of our sufficient conditions to diatomic measure-valued solutions then is the subject of Section \ref{sect:application}. Finally, in Section \ref{sect:necessaryconditions} we give also some necessary conditions and a brief discussion of the differences between those and the sufficient conditions. We will also describe the technical difficulties that would have to be overcome in order to give a complete characterization of compactly supported measure-valued solutions.

\section{Preliminaries}\label{sect:preliminaries}
\subsection{Notation}
Throughout this paper we will use the following definitions and notation. The dimension $d$ of the underlying space(-time) will be a natural number with $d\geq 2$. We will write $\mathcal{Q}:=(0,1)^{d}$ for the unit cube, $\mathbb{S}^{d-1}$ for the $(d-1)$-dimensional unit sphere, and $S_0^d$ for the space of trace-free $(d\times d)$-matrices. Note that we denote the $d$-dimensional flat torus by $\T^d$, which can be viewed as the unit cube $\mathcal{Q}$ with periodic boundaries. Moreover, for vectors $v,w\in\R^d$ the matrix denoted by $v\otimes w$ consists of the entries $(v\otimes w)_{i,j}=v_iw_j$. For $v\in \R^d$ we also introduce the shorthand notation
\begin{align*}
	v\ocircle v=v\otimes v-\frac{|v|^2}{d}\mathbb{E}_d.
\end{align*}
Here $\mathbb{E}_d$ is the $d$-dimensional identity matrix. Furthermore, $|A|$ denotes the operator norm for any matrix $A$.\\
Let $\mathcal{P}$ denote the set of probability measures over a given state space. We will call a weakly*-measurable map $\nu\colon \Omega\to \mathcal{P}(\R^m)$ a Young measure over $\Omega\subset \R^d$ open. One also writes $\nu\in L^{\infty}_{\operatorname{w}}(\Omega,\mathcal{P}(\R^m))$. Let $\nu$ be a Young measure and let $(w_n)$ be a sequence of measurable functions. We say that $(w_n)$ generates $\nu$, denoted by $w_n\overset{Y}{\rightharpoonup}\nu$, if
\begin{align*}
	\int\limits_{\Omega}^{}\int\varphi(x)f(w_n(x))\dx\rightarrow \int\limits_{\Omega}^{}\varphi(x)\int\limits_{\R^m}^{}f(z)\dd\nu_{x}(z)\dx
\end{align*}
for all $\varphi\in L^1(\Omega)$ and $f\in C_0(\R^m)$. See e.g. \cite{Ri18} for a more detailed exposition of Young measure theory.
\subsection{Homogeneous Differential Operators}	
	Let $\mathcal{A}=\sum\limits_{|\alpha|=k}^{}A^{\alpha}\partial_{\alpha}$ be a linear, homogeneous differential operator over $\R^d$ of order $k\in\N$ with constant coefficients $A^{\alpha}\in\R^{n\times m}$. In particular, $\mathcal{A}$ can be viewed as an operator from $C^{\infty}(\R^d,\R^m)$ to $C^{\infty}(\R^d,\R^n)$. A linear, homogeneous differential operator $\mathcal{B}=\sum\limits_{|\alpha|=l}^{}B^{\alpha}\partial_{\alpha}$ with constant coefficients $B^{\alpha}\in\R^{m\times N}$ is called \textit{potential} for $\mathcal{A}$ if
	\begin{align}
		\ker\mathbb{A}(\xi)=\image\mathbb{B}(\xi)\text{ for all }\xi\in\R^d\backslash\{0\}.\label{eq:potentialdefinition}
	\end{align}
	Here $\mathbb{A}$ and $\mathbb{B}$ denote the Fourier symbols of $\mathcal{A}$ and $\mathcal{B}$, respectively. This terminology is adopted from \cite{R18}. Furthermore, $\mathcal{A}$ is said to have \textit{constant rank} if there exists some $r\in\N$ such that
	\begin{align*}
		\rank\mathbb{A}(\xi)=r\text{ for all }\xi\in\R^d\backslash\{0\}.
	\end{align*}
	In fact, Theorem 1 in \cite{R18} shows that a homogeneous linear differential operator $\mathcal{A}$ has constant rank if and only if there exists a potential $\mathcal{B}$ for $\mathcal{A}$ that has constant rank.\\
	%The linear spaces $\ker\mathbb{A}(\omega)$ and $\image\mathbb{B}(\omega)$ are considered as subsets of $\R^m$.
	The following result characterizes the relation between operator and potential by their action on smooth periodic functions in the case of constant ranks.
	\begin{prop}\label{prop:potentialcharacterization}
		Let $\mathcal{A}$ and $\mathcal{B}$ be homogeneous constant rank operators. Then $\mathcal{B}$ is a potential for $\mathcal{A}$ in the sense of (\ref{eq:potentialdefinition}) if and only if for all $z\in C^{\infty}(\T^{d})\cap \ker\mathcal{A}$ with $\dashint z=0$ there exists some $u\in C^{\infty}(\T^{d})$ such that $z=\mathcal{B}u$ and for all $u\in C^{\infty}(\T^{d})$ it holds that $\mathcal{A}\mathcal{B}u=0$.
	\end{prop}
	\begin{proof}
		By using Lemma~3.5 from \cite{AS21} it only remains to show that for constant rank operators it holds that
		\begin{align*}
			\ker\mathbb{A}(\omega)=\image\mathbb{B}(\omega)\text{ for all }\omega\in\R^d\backslash\{0\}\text{ for all }\xi\in\R^d\backslash\{0\}
		\end{align*}
		if
		\begin{align*}
			\ker\mathbb{A}(\omega)=\image\mathbb{B}(\omega)\text{ for all }\omega\in\R^d\backslash\{0\}\text{ for all }\xi\in\Z^d\backslash\{0\}.
		\end{align*}
		Indeed, assume the latter. Then for $\omega\in \Q^{d}\backslash \{0\}$ observe that $\lambda\omega\in\Z^{d}\backslash\{0\}$, where $\lambda$ denotes the least common multiple of the denominators of the components of $\omega$. Then by the homogeneity of $\mathcal{A}$ and $\mathcal{B}$ we obtain $\ker\mathbb{A}(\omega)=\image\mathbb{B}(\omega)$.\\
		Let now $\omega\in \R^d\backslash\{0\}$. Choose a sequence $(\omega_n)\subset \Q^{d}\backslash\{0\}$ with $\omega_n\rightarrow\omega$. As $\mathcal{A}$ and $\mathcal{B}$ have constant rank, Theorem 4.2 in \cite{R97} implies that $\mathbb{A}^{\dagger}(\omega_n)\rightarrow \mathbb{A}^{\dagger}(\omega)$ and $\mathbb{B}^{\dagger}(\omega_n)\rightarrow \mathbb{B}^{\dagger}(\omega)$, where $\mathbb{A}^{\dagger}$ and $\mathbb{B}^{\dagger}$ are the Moore-Penrose pseudoinverses of $\mathbb{A}$ and $\mathbb{B}$, respectively. Now let $w\in\R^N$. Then
		\begin{align*}
			\mathbb{A}(\omega)\mathbb{B}(\omega)w\leftarrow \mathbb{A}(\omega_n)\mathbb{B}(\omega_n)w=0,
		\end{align*}
		hence $\image\mathbb{B}(\omega)\subset \ker\mathbb{A}(\omega)$. On the other hand, for $z\in\ker\mathbb{A}(\omega)$ we have
		\begin{align*}
			z=\operatorname{pr}_{\ker\mathbb{A}(\omega)}z=(1-\mathbb{A}^{\dagger}(\omega)\mathbb{A}(\omega))z\leftarrow (1-\mathbb{A}^{\dagger}(\omega_n)\mathbb{A}(\omega_n))z=\operatorname{pr}_{\ker\mathbb{A}(\omega_n)}z=\mathbb{B}(\omega_n)w_{n},
		\end{align*}
		where we chose $w_{n}\in (\ker\mathbb{B}(\omega_n))^{\perp}$ such that $\operatorname{pr}_{\ker\mathbb{A}(\omega_n)}z=\mathbb{B}(\omega_n)w_n$. This is possible as we already know that $\ker\mathbb{A}(\omega_n)=\image\mathbb{B}(\omega_n)$. We obtain
		\begin{align*}
			|w_n-w_{m}|\leq |\mathbb{B}^{\dagger}(\omega_n)-\mathbb{B}^{\dagger}(\omega_m)| |\mathbb{B}(\omega_n)w_{n}|+|\mathbb{B}^{\dagger}(\omega_m)| |\mathbb{B}(\omega_n)w_{n}-\mathbb{B}(\omega_m)w_{m}|,
		\end{align*}
		which tends to zero as $m,n\rightarrow\infty$. Thus, $(w_n)$ is Cauchy and converges to some $w\in \R^N$. This implies that $z=\mathbb{B}(\omega)w$. Hence, also the inclusion $\ker\mathbb{A}(\omega)\subset \image\mathbb{B}(\omega)$ holds.
	\end{proof}
	Let us conclude this section with the following well-known result from the theory of Young measures.
	\begin{lem}\label{lem:measureproperties}
		Let $\Omega\subset \R^d$ be open. Let $(w_n)\subset L^1(\Omega)$ and $w\in L^1(\Omega)$. Suppose $w_n\overset{Y}{\rightharpoonup}\nu$ for some Young measure $\nu\colon \Omega\to\mathcal{P}(\R^m)$. Then $w_n+w\overset{Y}{\rightharpoonup}\tau_{w}\nu$ with $\tau_{w}\nu$ defined as
		\begin{align*}
			\langle(\tau_{w}\nu)_y,f\rangle:=\int\limits_{\R^m}^{}f(z+w(y))\dd\nu_y(z)
		\end{align*}
		for $f\in C_0(\R^m)$ and a.e.~$y\in\Omega$.
	\end{lem}
\section{A characterization theorem in the spirit of Fonseca-M\"uller}\label{sect:fonsecamueller}
	Throughout this section let $\mathcal{B}\colon C^{\infty}(\R^d,\R^N)\to C^{\infty}(\R^d,\R^m)$ be a linear homogeneous differential operator of order $l\in\N$. Define for $q>0$, $z\in \R^m$ and $g\in C(\R^m)$ the \textit{truncated quasiconvex envelope}
	\begin{align*}
		Q^{q}_{\mathcal{B}}g(z):=\inf\left\{\int\limits_{\T^{d}}^{}g(z+\mathcal{B}w)\dd x\,:\,w\in C_c^{\infty}(\mathcal{Q}),\  \|D^lw\|_{L^{\infty}}\leq q \right\}.
	\end{align*}
	Note that this terminology is not standard. In light of Corollary 1 in \cite{R18} this can be viewed as a truncated version of the $\mathcal{A}$\textit{-quasiconvex envelope}, cf.~\cite{FM99} for a detailed investigation of the latter concept.\\
	Let us study some preliminary properties of the truncated quasiconvex envelope that will be used in Section \ref{sect:application}. First, we will show that $Q_{\mathcal{B}}^q$ is strongly continuous in the truncation bound $q$.	
	\begin{lem}\label{lem:qlimit}
		Let $f\in C(\R^m)$ and let $q>0$. Then for all $z\in \R^m$ we have
		\begin{align*}
			\lim\limits_{a\rightarrow 0}Q_{\mathcal{B}}^{q+a}f(z)=Q_{\mathcal{B}}^qf(z).
		\end{align*}
	\end{lem}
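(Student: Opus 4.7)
The plan is to use the scaling $w \mapsto \lambda w$, which exploits the homogeneity of $\mathcal{B}$ to map the admissible class at threshold $q$ onto that at threshold $\lambda q$. First, I would note that $a \mapsto Q_{\mathcal{B}}^{q+a} f(z)$ is nonincreasing in $a$, so the one-sided limits exist, and it suffices to show they both equal $Q_{\mathcal{B}}^q f(z)$. The crucial preliminary observation is that, since $\mathcal{B}$ is homogeneous of order $l$, there is a constant $C=C(\mathcal{B})$ with $\|\mathcal{B}w\|_{L^\infty} \leq C\|D^l w\|_{L^\infty}$, so for every $w$ admissible in $Q_{\mathcal{B}}^{q+a}$ with $|a|\leq 1$, the values $z+\mathcal{B}w$ lie in a fixed compact set $K \subset \R^m$. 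On $K$ the function $f$ is uniformly continuous, which will make all the approximations uniform.

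For right-continuity, I fix $a>0$ small and choose a near-optimizer $w_a\in C_c^\infty(\mathcal{Q})$ with $\|D^l w_a\|_{L^\infty}\leq q+a$ and $\int_{\T^d} f(z+\mathcal{B}w_a)\,dx \leq Q_{\mathcal{B}}^{q+a} f(z) + a$. Setting $w_a' := \frac{q}{q+a}\,w_a$ gives $\|D^l w_a'\|_{L^\infty}\leq q$ (so $w_a'$ is admissible for $Q_{\mathcal{B}}^q$), and
\begin{equation*}
|\mathcal{B}w_a - \mathcal{B}w_a'| \;=\; \tfrac{a}{q+a}|\mathcal{B}w_a| \;\leq\; C\,a.
\end{equation*}
Uniform continuity of $f$ on $K$ gives $\big|\int_{\T^d}(f(z+\mathcal{B}w_a)-f(z+\mathcal{B}w_a'))\,dx\big|\to 0$ as $a\to 0^+$, and this bound is independent of $w_a$. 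Therefore $Q_{\mathcal{B}}^q f(z)\leq \int_{\T^d} f(z+\mathcal{B}w_a')\,dx \leq Q_{\mathcal{B}}^{q+a} f(z) + a + \mathrm{o}(1)$, which combined with the trivial inequality $Q_{\mathcal{B}}^{q+a} f(z)\leq Q_{\mathcal{B}}^q f(z)$ yields the limit from the right.

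For left-continuity, fix $\epsilon>0$ and choose $w\in C_c^\infty(\mathcal{Q})$ with $\|D^l w\|_{L^\infty}\leq q$ and $\int_{\T^d} f(z+\mathcal{B}w)\,dx\leq Q_{\mathcal{B}}^q f(z)+\tfrac{\epsilon}{2}$. For $a<0$ small set $w_a := \frac{q+a}{q}\,w$; then $\|D^l w_a\|_{L^\infty}\leq q+a$, and $\mathcal{B}w_a \to \mathcal{B}w$ uniformly on $\mathcal{Q}$ as $a\to 0^-$. Again using that $z+\mathcal{B}w_a\in K$ and uniform continuity of $f$ on $K$ (or dominated convergence), $\int_{\T^d} f(z+\mathcal{B}w_a)\,dx \to \int_{\T^d} f(z+\mathcal{B}w)\,dx$, so $Q_{\mathcal{B}}^{q+a} f(z) \leq Q_{\mathcal{B}}^q f(z) + \epsilon$ for $|a|$ small. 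Since $\epsilon$ was arbitrary, we obtain the reverse of the monotonicity bound $Q_{\mathcal{B}}^{q+a} f(z)\geq Q_{\mathcal{B}}^q f(z)$, completing the proof.

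The only subtle point is making the error estimates uniform in the test function when one cannot assume $f$ is globally Lipschitz or globally uniformly continuous; homogeneity of $\mathcal{B}$ is exactly what confines the argument of $f$ to a fixed compact set, and I expect that to be the essential ingredient of the argument.
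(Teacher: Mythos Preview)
Your proof is correct and follows essentially the same approach as the paper's: both arguments exploit monotonicity in the threshold, rescale a near-optimizer $w\mapsto \lambda w$ to pass between the admissible classes at $q$ and $q\pm a$, and control the resulting error via uniform continuity of $f$ on the fixed compact set $\{z+\mathcal{B}w:\|D^lw\|_{L^\infty}\le q+1\}$ (the paper phrases this as the modulus of continuity $\omega_{f,z,q}$ on $B_{|z|+2C_{\mathcal{B}}q}(0)$). The only cosmetic difference is that the paper uses the scaling factor $1-\tfrac{|a|}{q}$ whereas you use $\tfrac{q}{q+a}$, which is slightly cleaner but functionally identical.
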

	\begin{proof}
		Fix $f\in C(\R^m)$ and $q>0$. On the one hand, for all $-q<a<q$ we have
		\begin{align*}
			Q_{\mathcal{B}}^{q+|a|}f(z)\leq Q_{\mathcal{B}}^qf(z)\leq Q_{\mathcal{B}}^{q-|a|}f(z)
		\end{align*}
		for all $z\in \R^m$. On the other hand, let $-q<a<q$ and let $\delta>0$. For all $z\in \R^m$ we can choose some $u_{\delta,z}^{q+|a|}\in C_c^{\infty}(\mathcal{Q})$ with $\|D^lu_{\delta,z}^{q+|a|}\|_{L^{\infty}}\leq q+|a|$, approximating the infimum in the definition of $Q_{\mathcal{B}}^{q+|a|}f(z)$ with error at most $\delta$. Denote the modulus of continuity of $f$ on $B_{|z|+2C_\mathcal{B}q}(0)$ by $\omega_{f,z,q}$, where $C_{\mathcal{B}}$ is a constant depending only on the operator $\mathcal{B}$ satisfying $\|\mathcal{B}h\|_{L^{\infty}}\leq C_{\mathcal{B}}\|D^lh\|_{L^{\infty}}$ for all $h\in C^{\infty}(\R^d)$. Then we estimate
		\begin{align*}
			Q_{\mathcal{B}}^{q+|a|}f(z)&\geq \int\limits_{\T^{d}}^{}f\left(z+\mathcal{B}u_{\delta,z}^{q+|a|}(x) \right)\dx-\delta\\
			&\geq \int\limits_{\T^{d}}^{}f\left(z+\mathcal{B}\left(u_{\delta,z}^{q+|a|}(x)-\frac{|a|}{q+|a|}\cdot u_{\delta,z}^{q+|a|}(x) \right) \right)\dx-\delta-\omega_{f,z,q}(2|a|C_{\mathcal{B}})\\
			&\geq Q_{\mathcal{B}}^{q}f(z)-\delta-\omega_{f,z,q}(2|a|C_{\mathcal{B}}).
		\end{align*}
		Similarly, we estimate
		\begin{align*}
			Q_{\mathcal{B}}^{q}f(z)&\geq \int\limits_{\T^{d}}^{}f\left(z+\mathcal{B}u_{\delta,z}^{q}(x) \right)\dx-\delta\\
			&\geq Q_{\mathcal{B}}^{q-|a|}f(z)-\delta-\omega_{f,z,q}(2|a|C_{\mathcal{B}}).
		\end{align*}
		Letting $\delta\rightarrow 0$ and $a\rightarrow 0$ yields that
		\begin{align*}
			Q_{\mathcal{B}}^{q}f(z)\leq \lim\limits_{a\rightarrow 0}Q_{\mathcal{B}}^{q+|a|}f(z)\leq Q_{\mathcal{B}}^{q}f(z)\leq \lim\limits_{a\rightarrow 0}Q_{\mathcal{B}}^{q-|a|}f(z)\leq Q_{\mathcal{B}}^{q}f(z),
		\end{align*}
		which implies the assertion.
	\end{proof}
	Clearly we have $f(z)=Q^0_\mathcal{B}f(z)\geq Q^a_\mathcal{B}f(z)$. It is straightforward to show that the previous lemma can be extended to the case $q=0$.\\
	%\begin{lem}\label{lem:qwithlimitzero}
	%	Let $f\in C(\R^m)$. Then for all $z\in \R^m$  it holds that
	%	\begin{align*}
	%		\lim\limits_{a\searrow 0}Q_{\mathcal{B}}^{a}f(z)=f(z).
	%	\end{align*}
	%\end{lem}
	%\begin{proof}
	%	Fix $f\in C^(\R^m)$ and $z\in \R^m$. Choosing $u_{\delta,z}^a\in C_c^{\infty}(Q)$ with $\|D^lu_{\delta,z}^a\|_{L^{\infty}}\leq a$ appropriately yields
	%	\begin{align*}
	%		\left|Q^a_\mathcal{B}f(z)-f(z)\right|\leq \int\limits_{\T^{d+1}}^{}\left|f\left(z+\mathcal{B}u_{\delta}^a(x) \right)-f(z) \right|\dx+\delta\leq \omega_{f,z}(C_{\mathcal{B}}a)+\delta.
	%	\end{align*}
	%	Letting $\delta$ and $a$ tend to zero from above finishes the proof.
	%\end{proof}
	The following result states that the truncated quasiconvex envelope satisfies a kind of convexity condition on the wave-cone of $\mathcal{A}$.
	\begin{lem}\label{lem:waveconeconvexity}
		Assume that $\mathcal{B}$ is a constant rank operator of order $l$. For all $x_1,x_2\in \R^m$ satisfying $x_1-x_2\in \image\mathbb{B}(\omega)$ for some $\omega\in\mathbb{S}^{d-1}$ there exists a number $C_{\mathcal B}$ independent of $\omega$ such that
		\begin{align*}
			\lambda f(x_1)+(1-\lambda)f(x_2)\geq Q_{\mathcal{B}}^{C_{\mathcal B}|x_1-x_2|}f(\lambda x_1+(1-\lambda)x_2)
		\end{align*}
		for all $\lambda\in (0,1)$ and for all $f\in C(\R^m)$.
	\end{lem}
	\begin{proof}
		Let $\lambda\in (0,1)$ and $f\in C(\R^m)$. Fix two vectors $x_1,x_2$ such that $x_1-x_2\in \image \mathbb{B}(\omega)$ for some $\omega\in \mathbb{S}^{d-1}$. Let $\delta>0$ be smaller than $\frac{\lambda}{6}$. Actually $\delta$ will tend to zero in the end.\\
		The proof is based on the correct choice of test function. To this end, consider
		\begin{align*}
			\chi\colon [0,1)\to\R,\ x\mapsto\begin{cases}
				1-\lambda&\ x\in \left[0,{\lambda} \right),\\
				-\lambda& x\in \left[{\lambda},1 \right).
			\end{cases}
		\end{align*}
		Without renaming we extend $\chi$ periodically to $\R$. Then for all $n\in \N$ it holds that $z_n:=(x_1-x_2)\chi(nx\cdot \omega)$ satisfies $\|z_n\|_{L^{\infty}(\T^{d})}\leq |x_1-x_2|$ and moreover by standard arguments from e.g.~the appendix of \cite{BM84} there exists $n_{\omega,\delta}$ such that
		\begin{align*}
			\lambda f(x_1)+(1-\lambda)f(x_2)\geq \int\limits_{\T^{d}}^{}f(\lambda x_1+(1-\lambda)x_2+z_n(x))\dx-\delta
			%(for rotated domain introduce tiling with small correctly oriented cubes and the boundary is the error, here the $L^{\infty}$-bound on $z_n$ is used)
		\end{align*}
		for all $n\geq n_{\omega,\delta}$.
		%we may also assume that $z_n$ has mean zero, as $\int_{\T^d}z_n\dd x\to 0$ as $n\to\infty$, and for sufficiently large $n$ we may absorb the error into $\delta$.
		It is not difficult to see that there exists a unique periodic function $\psi\in W^{l,\infty}(\mathbb T)$ of mean zero such that $D^l\psi=\chi$. Moreover, $\|D^j\psi\|_{L^\infty}\leq 1$ for all $j=1,\dots,l$.	
		%Now consider the function
		%\begin{align*}
			%\psi\colon [0,1)\to \R,\ x\mapsto\begin{cases}
				%(1-\lambda)\frac{x^2}{2}&\ x\in \left[0,{\lambda} \right)\\				
				%-\frac{\lambda}{2}\left(x^2-x+\frac{\lambda}{4} \right)&\ x\in \left[{\lambda},1\right)\\
				%%(1-\lambda)\left(\frac{x^2}{2}-x+\frac{\lambda}{8}(4-\lambda) \right)&,\ x\in \left[1-\frac{\lambda}{2},1 \right)
			%\end{cases}.
		%\end{align*}
		%It is straightforward to check that the second order derivative of the periodic extension of $\psi$ equals $\chi$. Note also that $\|\psi\|_{L^{\infty}}\leq \frac{\lambda}{8}(1-\lambda)<1$. 
		By assumption there exists $\xi\in \R^N$ such that $x_1-x_2=\mathbb{B}(\omega) \xi=\mathbb{B}(\omega)\operatorname{pr}_{(\ker\mathbb{B}(\omega))^\perp}(\xi)$. Hence,
		\begin{align*}
			\mathcal{B}\left(\operatorname{pr}_{(\ker\mathbb{B}(\omega))^\perp}(\xi) n^{-l}\psi(nx\cdot\omega)\right)=\mathbb{B}(\omega)\xi \chi(nx\cdot\omega)=z_n(x)
		\end{align*}
		for all $n\in\N$, i.e.~$u_n(x):=\operatorname{pr}_{(\ker\mathbb{B}(\omega))^\perp}(\xi) n^{-l}\psi(nx\cdot\omega)$ is a potential for $z_n$. Let $\eta$ denote a standard mollifier on $\R^{d}$ and $\eta_{\frac{\delta}{n}}$ its rescaling with support in $B_{\frac{\delta}{n}}(0)$. Since $\mathcal{B}\left(u_n\ast\eta_{\frac{\delta}{n}}\right)=z_n\ast \eta_{\frac{\delta}{n}}=(z_1\ast\eta_{\delta})(n\bullet)$ equals $z_n$ on a subset of $\T^{d}$ of measure $(1-5\delta)$ for $n$ large enough and since $\left\|z_n\ast\eta_{\frac{\delta}{n}}\right\|_{L^{\infty}}\leq \|z_1\|_{L^{\infty}}$, we can estimate
		\begin{align*}
			\int\limits_{\T^{d}}^{}f(\lambda x_1+(1-\lambda)x_2+z_n(x))\dx\geq \int\limits_{\T^{d}}^{}f\left(\lambda x_1+(1-\lambda)x_2+\mathcal{B}\left(u_n\ast\eta_{\frac{\delta}{n}}\right)\right)\dx-\delta C_{f,x_1,x_2}
		\end{align*}
		for $n\geq n_{\omega,\delta}$ with $n_{\omega,\delta}$ large enough.\\
		In the final step we need to cut-off $(u_n\ast\eta_{\delta})\in C^{\infty}(\R^d)$ to ensure that its support is compactly contained in $\mathcal{Q}$. For that choose $\varphi_{\delta}\in C_c^{\infty}(\mathcal{Q})$ such that $0\leq \varphi_{\delta}\leq 1$, $\varphi_{\delta}(x)=1$ if $\dist(x,\partial \mathcal{Q})>\delta$, and $\|D^j\varphi_{\delta}\|_{L^{\infty}(\mathcal{Q})}\leq C\delta^{-j}$ for $j\in \{1,\dots,l\}$. Define
		\begin{align*}
			\tilde{u}_{\delta,n}(x):=\left(u_n\ast\eta_{\frac{\delta}{n}}\right)(x)\cdot\varphi_{\delta}(x).
		\end{align*}
		Then $\tilde{u}_{\delta,n}\in C_c^{\infty}(\mathcal{Q})$ and
		\begin{align*}
			\|D^l\tilde{u}_{\delta,n}\|_{L^{\infty}}&\leq \|D^lu_n\|_{L^{\infty}}+C_l\sum\limits_{j=0}^{l-1}\|D^ju_n\|_{L^{\infty}}\|D^{l-j}\varphi_{\delta}\|_{L^{\infty}}\\
			&\leq (1+C_l)|\mathbb{B}^{\dagger}(\omega)\mathbb{B}(\omega)\xi|\left(\|\chi\|_{L^{\infty}}+\sum\limits_{j=0}^{l-1}\frac{\left\|D^j\psi\right\|_{L^{\infty}}}{m^{l-j}\delta^{l-j}} \right)\\
			&\leq C_{\mathcal{B}}|x_1-x_2|\left(1+\frac{1}{\sqrt{n}} \right)
		\end{align*}
		for $n\geq n_{\omega,\delta}$ with $n_{\omega,\delta}$ large enough. Here, we used the fact that the pseudoinverse $\omega\mapsto \mathbb{B}^{\dagger}(\omega)$ is continuous on the compact set $\mathbb{S}^{d-1}$ due to the constant rank condition on $\mathcal{B}$, cf.~Theorem 4.2 in \cite{R97}. Thus,
		\begin{align*}
			\int\limits_{\T^{d}}^{}f\left(\lambda x_1+(1-\lambda)x_2+\mathcal{B}\left(u_n\ast\eta_{\frac{\delta}{n}}\right)\right)\dx&\geq \int\limits_{\T^{d}}^{}f\left(\lambda x_1+(1-\lambda)x_2+\mathcal{B}\tilde{u}_{\delta,n}\right)\dx-\delta C_{\mathcal{B},f,x_1,x_2}\\
			&\geq Q_{\mathcal{B}}^{C_{\mathcal{B}}|x_1-x_2|\left(1+\frac{1}{\sqrt{n}} \right)}f(\lambda x_1+(1-\lambda)x_2)-\delta C_{\mathcal{B},f,x_1,x_2}.
		\end{align*}
		In conclusion, with the help of Lemma \ref{lem:qlimit} we obtain the assertion if we let $n\rightarrow\infty$ and then $\delta\rightarrow 0$.
	\end{proof}
	We now come to the main result of this section. This result is a characterization theorem for Young measures being generated by uniformly bounded $\mathcal{A}$-free functions on the level of the potential $\mathcal{B}$ in the spirit of Theorem 4.1 in \cite{FM99}, the proof of which will be a guideline for our proof. Note that $\mathcal{B}$ being a potential for some operator $\mathcal{A}$ is not important as we formulate the theorem already on the level of $\mathcal{B}$. It is also worth mentioning that our theorem treats images of $\mathcal{B}$ with uniformly bounded highest order gradients which is the main reason we introduced the truncated quasiconvex envelope.
	\begin{theo}\label{theo:fromsubsolntosequence}
		Let $\Omega$ be open, bounded. Let $\nu$ be a Young measure such that $\langle\nu,\operatorname{id}\rangle\in L^1(\Omega)$. Let $R>0$. The following are equivalent:
		\begin{enumerate}
			\item[(i)] For all $f\in C(\R^m)$ and a.e.~$x\in\Omega$ it holds that
			\begin{align*}
				\langle\nu_{x},f\rangle&\geq Q^{R}_{\mathcal{B}}f(\langle\nu_{x},\operatorname{id}\rangle).
			\end{align*}
		\item[(ii)] There exists a sequence $(u_j)\subset C_c^{\infty}(\Omega)$ such that
		\begin{align*}
			\underset{j\rightarrow\infty}{\limsup}\|D^lu_j\|_{L^{\infty}(\Omega)}\leq & R,\\
			D^lu_j\overset{L^1(\Omega)}{\rightharpoonup}& 0,\\
			\mathcal{B}u_j+\langle \nu,\id\rangle\overset{Y}{\rightharpoonup}& \nu.
		\end{align*}
		\end{enumerate}
		The assertion of the theorem holds also for $\Omega$ replaced by the torus $\T^{d}$ or $(0,T)\times \T^{d-1}$ for some $T>0$.
	\end{theo}
	\begin{proof}
		As the proof for the torus $\T^{d}$ or $(0,T)\times \T^{d-1}$ as underlying domain are just a special cases of that for $\Omega\subset \R^d$ open, we only treat the latter in full detail.\\
		\\
		\textbf{Statement (i) implies (ii):}\\
		We follow the general strategy of the proof of Theorem 4.1 in \cite{FM99}, but adapt this to our specific situation at several points.\\
		Define
		\begin{align*}
			\mathbb{H}^{q}:=\Big\{&\nu\in\mathcal{P}(\R^m)\,:\,\langle\nu,\operatorname{id}\rangle=0\text{ and there exists }(w_j)\subset C_c^{\infty}(\mathcal{Q})\text{ with }\mathcal{B}w_j\overset{Y}{\rightharpoonup}\nu,\\
			& D^lw_j\overset{L^1(\T^{d})}{\rightharpoonup}0,\ \underset{j\rightarrow\infty}{\limsup}\|D^lw_j\|_{L^{\infty}(\T^{d})}\leq q \Big\},
		\end{align*}
		where $q>0$.\\
		\\
		\textbf{Claim 1:} $\mathbb{H}^{q}$ is convex.\\
		Let $\theta\in(0,1)$ and let $\nu,\mu\in\mathbb{H}^{q}$ with corresponding sequences $(v_j),(w_j)\subset C_c^{\infty}(\mathcal{Q})$ satisfying $\underset{j\rightarrow\infty}{\limsup}\|D^lv_j\|_{L^{\infty}(\T^{d})}\leq q$, and $D^lv_j\overset{L^1(\T^{d})}{\rightharpoonup}0$, similarly for $(w_j)$. By assumption we have $\|v_j\|_{W^{l,\infty}(\T^{d})}+\|w_j\|_{W^{l,\infty}(\T^{d})}\leq C$ for some $C>0$. Thus, after taking a subsequence we have $v_j\overset{*}{\rightharpoonup}v$ and $w_j\overset{*}{\rightharpoonup}w$ in $W_0^{l,\infty}(\mathcal{Q})$ for some $v,w\in W_0^{l,\infty}(\mathcal{Q})$. As $D^lv_j,D^lw_j$ tend to zero weakly in $L^1$, we have $v=w=0$. Sobolev embedding then yields $v_j\rightarrow 0$ and $w_j\rightarrow 0$ in $C^{l-1,\alpha}(\T^{d})$ for any $\alpha\in [0,1)$.\\
		For all $\varphi\in C_c^{\infty}(\mathcal{Q})$ we obtain
		\begin{align}
			\|\varphi\|_{C^l(\T^{d})}\|w_j-v_j\|_{C^{l-1}(\T^{d})}\rightarrow 0.\label{eq:interpolationconvergence}
		\end{align}
		Now choose a sequence $(\varphi_k)\subset C_c^{\infty}(\mathcal{Q})$ with $0\leq\varphi\leq 1$ and $\varphi_k\rightarrow \mathds{1}_{(0,\theta)\times \T^{d-1}}$ in $L^1(\T^{d})$. Using (\ref{eq:interpolationconvergence}) we can choose a subsequence of $v_j$ and $w_j$ such that
		\begin{align}
			\|\varphi_j\|_{C^l(\T^{d})}\|w_j-v_j\|_{C^{l-1}(\T^{d})}\leq \frac{1}{j}.\label{eq:varphiestimate}
		\end{align}
		Define
		\begin{align*}
			u_j:=v_j+\varphi_j\cdot(w_j-v_j)\in C_c^{\infty}(\mathcal{Q}).
		\end{align*}
		Let $\delta>0$ and choose $x_j\in\T^{d}$ such that
		\begin{align*}
			\|D^lv_j+\varphi_j(D^lw_j-D^lv_j)\|_{L^{\infty}(\T^{d})}\leq |D^lv_j+\varphi_j(D^lw_j-D^lv_j)|(x_j)+\delta.
		\end{align*}
		For $j$ large enough such that $\|D^lv_j\|_{L^{\infty}(\T^{d})}\leq q+\delta$ and $\|D^lw_j\|_{L^{\infty}(\T^{d})}\leq q+\delta$ it holds that
		\begin{align*}
			\|D^lu_j\|_{L^{\infty}(\T^{d})}\leq &\|D^lv_j+\varphi_j\cdot (D^lw_j-D^lv_j)\|_{L^{\infty}(\T^{d})}+\|\varphi_j\|_{C^l(\T^{d})}\|w_j-v_j\|_{C^{l-1}(\T^{d})}\\
			\leq &|D^lv_j+\varphi_j(D^lw_j-D^lv_j)|(x_j)+\delta+\frac{1}{j}\\
			%\leq  &\varphi_j(x_j)(q+\delta)+(1-\varphi_j(x_j))(q+\delta)+\delta+\frac{1}{j} \\
		%	\leq &\varphi_j(t_0)\|D^lw_j\|_{L^{\infty}(\T^{d+1})}+(1-\varphi_j(t_0))\|D^lv_j\|_{L^{\infty}(\T^{d+1})}+2\delta,\ \forall j\text{ large enough}\\
		%	\leq &\varphi_j(t_0)(q+\delta)+(1-\varphi_j(t_0))(q+\delta)+2\delta,\ \forall j\text{ large enough}\\
			\leq & q+2\delta+\frac{1}{j}.
		\end{align*}
		As $\delta>0$ was arbitrary, this yields $\underset{j\rightarrow\infty}{\limsup}\|D^lu_j\|_{L^{\infty}(\T^{d})}\leq q$. It is straightforward to check that $D^l(u_j-w_j)\rightarrow 0$ in $L^1((0,\theta)\times \T^{d-1})$ and $D^l(u_j-v_j)\rightarrow 0$ in $L^1((\theta,1)\times \T^{d-1})$. Hence, $D^lu_j\rightharpoonup 0$ in $L^1(\T^{d})$ and
		\begin{align*}
			\mathcal{B}u_j\overset{Y}{\rightharpoonup} \left(\lambda_{x}\right)_{x\in\Omega}:=\begin{cases}
			\mu,\, & x^1\in (0,\theta),\\
			\nu,\, & x^1\in (\theta,1).
			\end{cases}
		\end{align*}
		Now consider the sequence
		\begin{align*}
			\bar{u}_{j,m}(x):=\frac{1}{m^l}u_j(mx),\,m\in\N,
		\end{align*}
		which again lies in $C_c^{\infty}(\mathcal{Q})$. Then $\|D^l\bar{u}_{j,m}\|_{L^{\infty}(\T^{d})}=\|D^lu_j\|_{L^{\infty}(\T^{d})}$. Moreover, for $\psi\in L^1(\T^{d})$ and $g\in C_0(\R^m)$ we have using Proposition 2.8 in \cite{FM99}
		\begin{align*}
			\lim\limits_{j\rightarrow\infty}\lim\limits_{m\rightarrow\infty}\int\limits_{\T^{d}}^{}\psi(x)g(\mathcal{B}\bar{u}_{j,m}(x))\dx&=\lim\limits_{j\rightarrow\infty}\int\limits_{\T^{d}}^{}\psi(x)\dx\int\limits_{\T^{d}}^{}g(\mathcal{B}u_j(y))\dy\\
			&=(\theta\langle\mu,g\rangle+(1-\theta)\langle\nu,g\rangle)\int\limits_{\T^{d}}^{}\psi(x)\dx
		\end{align*}
		and for $\chi\in L^{\infty}(\T^{d})$ we have
		\begin{align*}
			\lim\limits_{j\rightarrow\infty}\lim\limits_{m\rightarrow\infty}\int\limits_{\T^{d}}^{}\chi(x)D^l\bar{u}_{j,m}(x)\dx=\lim\limits_{j\rightarrow\infty}\int\limits_{\T^{d}}^{}\chi(x)\dx\int\limits_{\T^{d}}^{}D^lu_j(y)\dy=0
		\end{align*}
		as $D^lu_j\rightharpoonup 0$ in $L^1(\T^{d})$. So, taking a diagonal subsequence yields that $\theta\mu+(1-\theta)\nu\in\mathbb{H}^{q}$.\\
		\\
		\textbf{Claim 2:} $\mathbb{H}^{q}$ is relatively closed in $\mathcal{P}(\R^m)$ with respect to the weak*-topology in $\mathcal{M}(\R^m)$.\\
		So, let $\nu\in \overline{\mathbb{H}^{q}}^{\mathcal{M}(\R^m)}\subset\mathcal{P}(\R^m)$ and let $(f_i)\subset L^1(\T^{d})$, $(g_j)\subset C_0(\R^m), (h_n)\subset L^1\big(\T^{d},\R^{d^l\times N}\big)$ be dense, countable subsets. For all $1\leq j\leq k$ there exist $\nu_k\in\mathbb{H}^{q}$ such that
		\begin{align}
			|\langle \nu-\nu_k,g_j\rangle|\leq \frac{1}{k}.\label{eq:nuconvergence}
		\end{align}
		Thus, for all $0\leq i,j,n\leq k$ there exist $w^k\in C_c^{\infty}(\T^{d})$ with $\|D^lw^k\|_{L^{\infty}(\T^{d})}\leq q+\frac{1}{k}$, and
		\begin{align*}
			\left|\langle\nu_k,g_j\rangle\int\limits_{\T^{d}}^{}f_i\,\dx-\int\limits_{\T^{d}}^{}f_ig_j(\mathcal{B}w^k)\dx \right|&\leq \frac{1}{k},\\
			\left|\int\limits_{\T^{d}}^{}h_n: D^lw^k\dx \right|\leq \frac{1}{k}.
		\end{align*}
		In particular, after taking a subsequence, $(\mathcal{B}w^k)$ generates a Young measure, which equals $\nu$ by density of $(f_i)$ and $(g_j)$. We also have that $D^lw^k$ is uniformly bounded in $L^{\infty}(\T^{d})$ and hence $D^lw^k\overset{*}{\rightharpoonup}z$ in $L^{\infty}(\T^{d})$ for some $z$ which has to be zero by the above estimates and density of $(h_n)$. Note that $\supp(\nu_k),\supp(\nu)\subset \overline{B_C(0)}$ uniformly in $k$ for some $C>0$ large enough by the uniform boundedness of the generating sequence. Thus, if we choose $g_1\in C_0(\R^m)$ with $g_1\big|_{B_C(0)}=\id$ in the above sequence, we obtain $\langle\nu,\id\rangle=\lim\limits_{k\rightarrow\infty}\langle\nu_k,g_1\rangle=0$. This shows $\nu\in\mathbb{H}^{q}$.\\
		\\
		\textbf{Case 1: Generating homogeneous Young measures with zero barycenter on the torus.}\\
		Fix $\nu\in\mathcal{P}(\R^m)$ with $\langle \nu,\operatorname{id}\rangle=0$ and $\langle\nu,f\rangle\geq Q^{R}_{\mathcal{B}}f(0)$ for all $f\in C(\R^m)$. Suppose that $\nu\notin\mathbb{H}^{R}$. By the previous claims and the Hahn-Banach Theorem there exist $g\in C_0(\R^m)$ and $\alpha\in\R$ such that
		\begin{align*}
			\langle\mu,g\rangle\geq \alpha\text{ and }\langle\nu,g\rangle <\alpha
		\end{align*}
		for all $\mu\in\mathbb{H}^{R}$. Let $w\in C_c^{\infty}(\mathcal{Q})$ be such that $\|D^lw\|_{L^{\infty}(\T^{d})}\leq R$. Then by Proposition 2.8 in \cite{FM99} the sequence $w_n(x):=\frac{1}{n^l}w(nx)$ satisfies $D^lw_n\overset{*}{\rightharpoonup}0$ in $L^{\infty}(\T^{d})$ and $\|D^lw_n\|_{L^{\infty}(\T^{d})}\leq R$. As $\mathcal{B}w\in L^{\infty}(\T^{d})$ one deduces that $h(\mathcal{B}w_n)=h(\mathcal{B}w)(n\cdot)\overset{*}{\rightharpoonup}\int\limits_{\T^{d}}^{}h(\mathcal{B}w)\dx$ in $L^{\infty}(\T^{d})$ for all $h\in C_0(\R^m)$. Thus,
		\begin{align*}
			\mathcal{B}w_n\overset{Y}{\rightharpoonup}\overline{\delta_{\mathcal{B}w}},
		\end{align*}
		where $\langle\overline{\delta_{\mathcal{B}w}},h\rangle:=\int\limits_{\T^d}^{}h(\mathcal{B}w)\dx$ for all $h\in C_0(\R^m)$. In particular, $\overline{\delta_{\mathcal{B}w}}\in\mathbb{H}^{R}$. Thus,
		\begin{align*}
			\int\limits_{\T^{d}}^{}g(\mathcal{B}w)\dx=\langle\overline{\delta_{\mathcal{B}w}},g\rangle\geq \alpha
		\end{align*}
		which by the definition of $Q^{R}_{\mathcal{B}}g$ implies $Q^{R}_{\mathcal{B}}g(0)\geq \alpha$, a contradiction to $Q^{R}_{\mathcal{B}}g(0)\leq \langle\nu,g\rangle<\alpha$. Therefore, $\nu\in\mathbb{H}^{R}$. So, there exists a sequence $(w_j^{})\subset C_c^{\infty}(\mathcal{Q})$ such that $\mathcal{B}w_j^{}\overset{Y}{\rightharpoonup} \nu$, $D^lw^{}_j\overset{j\rightarrow \infty}{\rightharpoonup}0$ in $L^1(\T^{d})$, and $\underset{j\rightarrow\infty}{\limsup}\|D^lw_j^{}\|_{L^{\infty}(\T^{d})}\leq R$. This finishes the proof for Case 1.\\
		\\
		\textbf{Case 2: Generating inhomogeneous Young measures with zero barycenter on $\Omega$.}\\
		Here we adopt the proof of Proposition 4.4 in \cite{FM99} to our situation.\\
		Define
		\begin{align*}
			\mathbb{X}:=&\{\nu\colon \Omega\to \mathcal{M}(\R^m)\,:\,\nu\text{ is weak*-measurable}, \langle\nu_{x},\operatorname{id}\rangle=0\text{ for a.e.~}x\in\Omega\},\\
			\mathbb{Y}:=&\left\{\nu\in\mathbb{X}\,:\,\text{there exists }(w_j)\subset C_c^{\infty}(\Omega)\text{ s.t.~}\mathcal{B}w_j\overset{Y}{\rightharpoonup}\nu,\ D^lw_j\overset{L^1}{\rightharpoonup}0,\ \underset{j\rightarrow\infty}{\limsup}\|D^lw_j\|_{L^{\infty}(\Omega)}\leq R\right\},\\
			\mathbb{W}:=&\left\{\nu\in\mathbb{X}\,:\,\langle\nu_{x},g\rangle\geq Q^{R}_{\mathcal{B}}g(0)\text{ for all }g\in C_0(\R^m)\text{ and for a.e.~}x\in\Omega\right\}.
		\end{align*}
		We will show that $\mathbb{W}\subset \mathbb{Y}$, which implies statement (ii) for Case 2.\\
		Note that $\overline{\mathbb{Y}}^{\mathcal{M}(\overline{\Omega}\times R^m)}\cap \mathbb{X}=\mathbb{Y}$. To see this, fix $\nu\in \overline{\mathbb{Y}}^{\mathcal{M}(\overline{\Omega}\times R^m)}\cap \mathbb{X}$. Further, let $(f_i)\subset C_c^{\infty}(\Omega)$ be a countable dense subset of $L^1(\Omega)$ and let $(g_j)\subset C_c^{\infty}(\R^m)$ be a countable dense subset of $C_0(\R^m)$. We also choose a countable dense subset $(h_n)$ of $L^1\big(\T^{d},\R^{d^l\times N}\big)$. For a function $\varphi\in C_0(\overline{\Omega}\times \R^m)$ the dual pairing with $\nu$ is defined as $\int\limits_{\Omega}^{}\langle \nu_x,\varphi(x,\cdot)\rangle \dx$. Now as $f_i\cdot g_j\in C_0(\overline{\Omega}\times \R^m)$, we find some $\nu^k\in\mathbb{Y}$ such that
		\begin{align*}
			\left|\int\limits_{\Omega}^{}f_i(x)\left\langle \nu_x-(\nu^k)_x,g_j\right\rangle\dx\right|\leq \frac{1}{k}
		\end{align*}
		for $1\leq i,j\leq k$. On the other hand, for every $k\in\N$ there exists $w^k\in C_c^{\infty}(\Omega)$ such that $\|D^lw^k\|_{L^{\infty}(\Omega)}\leq R+\frac{1}{k}$ and
		\begin{align*}
			\left|\int\limits_{\Omega}^{}f_i(x)\left\langle\big(\nu^k\big)_{x},g_j\right\rangle\dx-\int\limits_{\Omega}^{}f_i(x)g_j(\mathcal{B}w^k(x))\dx \right|&\leq \frac{1}{k},\\
			\left|\int\limits_{\Omega}^{}h_n(x)\cdot D^lw^k(x)\right|&\leq \frac{1}{k}
		\end{align*}
		for $1\leq i,j,n\leq k$. This implies that $D^lw^k\overset{L^1(\Omega)}{\rightharpoonup}0$ and $\nu$ is generated by $(\mathcal{B}w^k)$. Hence, $\nu\in\mathbb{Y}$.\\
		\\
		Now define
		\begin{align*}
			\mathcal{G}_k&:=\left\{\frac{1}{k}(y+\mathcal{Q})\,:\,y\in\Z^d,\ \frac{1}{k}(y+\mathcal{Q})\subset \Omega \right\},\\
			G_k&:=\underset{U\in\mathcal{G}_k}{\bigcup}U,\\
			\mathbb{W}_k&:=\left\{\nu\in\mathbb{W}\,:\,\nu\big|_U\text{ is homogeneous for }U\in\mathcal{G}_k,\ \nu\big|_{(\Omega\backslash G_k)}=\delta_0 \right\},\\
			D&:=\underset{k\in\N}{\bigcup}\mathbb{W}_k,
		\end{align*}
		\textbf{Claim 3:} $\overline{D}^{\mathcal{M}(\overline{\Omega}\times R^m)}\cap \mathbb{W}=\mathbb{W}$.\\
		Let $\nu\in\mathbb{W}$ and define
		\begin{align*}
			\nu^k_{x}:=\begin{cases}
			\displaystyle\underset{U}{\dashint}\nu_{y}\dy,\ & x\in U,\ U\in \mathcal{G}_k,\\
			\delta_0, & \text{ else}.
			\end{cases}
		\end{align*}
		Here
			\begin{align}
				\left\langle\underset{U}{\dashint}\nu_{y}\,\dy,g\right\rangle:=\underset{U}{\dashint}\langle\nu_{y},g\rangle \dy\geq Q^{R}_{\mathcal{B}}g(0)
			\end{align}
			for all $g\in C_0(\R^N)$ and all $U\in\mathcal{G}_k$. It is not difficult to check that this defines a positive linear functional on $C_0(\R^N)$. Hence, the Riesz-Markov representation theorem yields that $\underset{U}{\dashint}\nu_y\,\dy\in \mathcal{P}(\R^N)$. Thus, $\nu^k\in\mathbb{W}_k$. Therefore, we need to show that
		\begin{align*}
			\int\limits_{\Omega}^{}\left\langle(\nu^k)_x,f(x,\cdot)\right\rangle\dx\rightarrow \int\limits_{\Omega}^{}\left\langle\nu_x,f(x,\cdot)\right\rangle\dx
		\end{align*}
		for all $\ f\in C_0(\overline{\Omega}\times \R^m)$. To that end, fix $f\in C_0(\overline{\Omega}\times \R^m)$ and for every $U\in\mathcal{G}_k$ let $x_U\in\left(\frac{1}{k}\Z\right)^d$ denote the corner of $U$ corresponding to the point $0\in [0,1]^d$. Further, define
		\begin{align*}
			\omega(\delta):=\sup\{\|f(x,\cdot)-f(y,\cdot)\|_{C_0(\R^m)}\,:\,x,y\in\overline{\Omega},\ |x-y|\leq \delta \}.
		\end{align*}
		We obtain for sufficiently large $k$
		\begin{align*}
			&\left|\int\limits_{\Omega}^{}\int\limits_{\R^m}^{}f(x,z)\dd\nu_{x}(z)\dx-\int\limits_{\Omega}^{}\int\limits_{\R^m}^{}f(x,z)\dd\nu^k_{x}(z)\dx \right|\\
			\leq&\left|\sum\limits_{U\in\mathcal{G}_k}^{}\int\limits_{U}^{}\int\limits_{\R^m}^{}f(x,z)\dd\nu_{x}(z)\dx-\sum\limits_{U\in\mathcal{G}_k}^{}\int\limits_{U}^{}\underset{U}{\dashint}\int\limits_{\R^m}^{}f(x,z)\dd\nu_{y}(z)\dy\dx \right|+\|f\|_{C_0(\overline{\Omega}\times\R^m)}\left|\Omega \backslash G_k\right|\\
			%\leq &\left|\sum\limits_{U\in\mathcal{G}_k}^{}\int\limits_{U}^{}\int\limits_{\R^m}^{}f(x_U,z)\dd\nu_{(t,x)}(z)\dx-\sum\limits_{U\in\mathcal{G}_k}^{}\underset{U}{\dashint}\int\limits_{U}^{}\int\limits_{\R^m}^{}f(x_U,z)\dd\nu_{y}(z)\dx\dy \right|+2\omega\left(\frac{1}{k} \right)\cdot \int\limits_{G_k}^{}\int\limits_{\R^m}^{}\dd\nu_x(z)\dx\\
			\leq &2\omega\left(\frac{1}{k} \right)\cdot |\Omega|+\|f\|_{C_0(\overline{\Omega}\times\R^m)}\left|\Omega \backslash G_k\right|,
		\end{align*}
		which tends to zero as $k\rightarrow\infty$.%	\begin{align*}
	%		&\left|\int\limits_{U}^{}\int\limits_{\R^m}^{}f(t,x,z)\dd\nu_{(t,x)}(z)\dx\dt-\int\limits_{U}^{}\int\limits_{\R^m}^{}f(t,x,z)\dd\nu^k_{(t,x)}(z)\dx\dt \right|\\
	%		\leq &\left|\int\limits_{U}^{}\int\limits_{\R^m}^{}f(t_U,x_U,z)\dd\nu_{(t,x)}(z)\dx-\int\limits_{U}^{}\int\limits_{\R^m}^{}f(t_U,x_U,z)\dd\nu^k_{(t,x)}(z)\dx \right|\\
	%		&+\omega\left(\frac{1}{k} \right)\left(\int\limits_{U}^{}\int\limits_{\R^m}^{}\dd\nu_{(t,x)}(z)\dx\dt+\int\limits_{U}^{}\int\limits_{\R^m}^{}\dd\nu^k_{(t,x)}(z)\dx\dt \right)\\
	%		\leq & 2\omega\left(\frac{1}{k} \right)\int\limits_{U}^{}\int\limits_{\R^m}^{}\dd\nu^k_{(t,x)}(z)\dx\dt.
	%	\end{align*}
	%	Thus,
	%	\begin{align*}
	%		|\langle\nu^k,f\rangle-\langle\nu,f\rangle|\leq 2\omega\left(\frac{1}{k} \right)\int\limits_{G_k}^{}\int\limits_{\R^m}^{}\dd\nu^k_x(z)\dx\dt+2\|f\|_{C_0(\overline{(0,T)\times\Omega}\times(K\cup\{0\}))}\big|\big((0,T)\times\Omega\big)\backslash G_k\big|.
	%	\end{align*}
	%	The claim now follows from letting $k$ tend to infinity using the boundedness of $\Omega$.\\
		\\
		\\
		\textbf{Claim 4:} $\mathbb{W}_k\subset \mathbb{Y}$ for all $k\in\N$.\\
		Fix $k\in\N$. Recall that $\Omega$ is bounded, so we can write $\mathcal{G}_k$ as a finite family of cubes $\mathcal{Q}_i$ with side length $\frac{1}{k}$, i.e.~$\mathcal{G}_k=(\mathcal{Q}_i)_{i=1}^N$ for some $N\in \N$. Let $\nu\in\mathbb{W}_k$ with $\nu\big|_{\mathcal{Q}_i}=\nu^i$. Then by Case 1 above we infer that for all $i=1,\dots,N$ there exist $(w_j^i)_{j\in\N}\subset C_c^{\infty}(\mathcal{Q})$ with $\underset{j\rightarrow\infty}{\limsup}\|D^lw_j^i\|_{L^{\infty}(\T^{d})}\leq R$, $D^lw_j^i\overset{L^1(\T^d)}{\rightharpoonup}0$, and $\mathcal{B}w_j^i\overset{Y}{\rightharpoonup}\nu^i$ as $j\rightarrow\infty$. It is straightforward to check that
		\begin{align*}
			 \tilde{w}_j^i:=k^{-l}w_j^i(k\bullet-x_i)
		\end{align*}
		defined on $\mathcal{Q}_i$ satisfies $\underset{j\rightarrow\infty}{\limsup}\|D^l\tilde{w}_j^i\|_{L^{\infty}(\mathcal{Q}_i)}\leq R$, $D^l\tilde{w}_j^i\overset{L^1(\mathcal{Q}_i)}{\rightharpoonup}0$, and $\mathcal{B}\tilde{w}_j^i\overset{Y}{\rightharpoonup}\nu^i$, where $x_i$ denotes the corner of $\mathcal{Q}_i$ corresponding to the point $0\in [0,1]^d$. Then $\sum\limits_{i=1}^{N}\tilde{w}_j^i\in C_c^{\infty}(\Omega)$ has the desired properties such that $\nu$ lies in $\mathbb{Y}$. This finishes the proof of Claim 4.
		\\
		\\
		Combining the Claims 3 and 4 with the fact that $\overline{\mathbb{Y}}^{\mathcal{M}(\overline{\Omega}\times R^m)}\cap \mathbb{X}=\mathbb{Y}$ yields the desired inclusion $\mathbb{W}\subset \mathbb{Y}$.\\
		\\
		\textbf{Case 3: General Young measures.}\\
		Let $\nu$ be as in (i) in the statement of the theorem. Then the shifted Young measure $x\mapsto\Gamma_{-\langle\nu_{x},\operatorname{id}\rangle}\nu_{x}=:\bar{\nu}_{x}$ lies in $\mathbb{W}$. By the previous discussion this yields a sequence $(u_j)\subset C_c^{\infty}(\Omega)$ with $D^lu_j\overset{L^1(\Omega)}{\rightharpoonup}0$, $\mathcal{B}u_j\overset{Y}{\rightharpoonup}\bar{\nu}$, and $\underset{j\rightarrow\infty}{\limsup}\|D^lu_j\|_{L^{\infty}(\Omega)}\leq R$. Thus, $\mathcal{B}u_j+\langle\nu,\operatorname{id}\rangle\overset{Y}{\rightharpoonup}\nu$, which finishes the proof for sufficiency of (i) for (ii).\\
		\\
		\textbf{Statement (ii) implies (i):}\\
		We apply a localization argument. First, we consider the case where $\nu$ is a Young measure with zero barycenter and is generated by a sequence $(\mathcal{B}u_n)$ with $(u_n)\subset C_c^{\infty}(\Omega)$, $\underset{n\rightarrow\infty}{\limsup}\|D^lu_n\|_{L^{\infty}(\Omega)}\leq R$, and $D^lu_n\overset{L^1(\Omega)}{\rightharpoonup}0$. Let $(f_i)\subset C_c^{\infty}(\mathcal{Q})\subset L^1(\mathcal{Q})$ and $(g_j)\subset C_0(\R^m)$ be countable and dense subsets. We denote by $D_0\subset \Omega$ the set of joint Lebesgue points of the maps $x\mapsto\langle \nu_{x},g_j\rangle$ for all $j\in\N$ in the sense that
		\begin{align*}
			\lim\limits_{r\rightarrow 0}\int\limits_{Q}^{}|\langle \nu_{a+rx},g_j\rangle -\langle \nu_a,g_j\rangle|\dx=0.
		\end{align*}
		Fix $a\in D_0$ and define
		\begin{align*}
			u_{r,n}(x):=\frac{1}{r^l}u_n(a+rx),\ x\in \mathcal{Q},\ r>0\text{ small enough such that }a+r\mathcal{Q}\subset \Omega.
		\end{align*}
		Let $(\varphi_k)\subset C_c^{\infty}(\mathcal{Q})$ be a sequence of smooth cut-off functions such that $\varphi_k\rightarrow 1$ in $L^1(\mathcal{Q})$. Similarly as in Claim 1 we argue that $u_n\rightarrow 0$ in $C^{l-1,\alpha}(\Omega)$ for all $\alpha\in [0,1)$. Hence, after taking a subsequence $(u_{n_k})$ one obtains for all $k$
		\begin{align*}
			\|\varphi_k\|_{C^l(\mathcal{Q})}\|u_{n_k}\|_{C^{l-1}(\Omega)}\rightarrow 0.
		\end{align*}
		Define for all $r>0$ and $k\in \N$ the functions $v_{r,k}\in C_c^{\infty}(\mathcal{Q})$ by
		\begin{align*}
			v_{r,k}(x):=\varphi_k(x)u_{r,n_k}(x).
		\end{align*}
		Then for all fixed $r\leq 1$
		\begin{align}
			\underset{k\rightarrow\infty}{\limsup}\|D^lv_{r,k}\|_{L^{\infty}(\T^{d})}\leq \underset{k\rightarrow\infty}{\limsup}\|D^lu_{n_k}\|_{L^{\infty}(\Omega)}+\underset{k\rightarrow\infty}{\limsup}\,\frac{1}{r^l}\|\varphi_k\|_{C^l(\mathcal{Q})}\|u_{n_k}\|_{C^{l-1}(\Omega)}\leq R.\label{eq:dlconvergence}
		\end{align}
		Moreover, for all $i,j\in\N$ we have
		\begin{align*}
			\lim\limits_{r\rightarrow 0}\lim\limits_{k\rightarrow \infty}\int\limits_{\mathcal{Q}}^{}f_i(x)g_j(\mathcal{B}v_{r,k}(x))\dx&=\lim\limits_{r\rightarrow 0}\lim\limits_{k\rightarrow \infty}\int\limits_{\mathcal{Q}}^{}f_i(x)g_j(\varphi_k(x)(\mathcal{B}u_{n_k})(a+rx))\dx\\
			&=\lim\limits_{r\rightarrow 0}\lim\limits_{k\rightarrow \infty}\underset{a+(0,r)^d}{\dashint}f_i\left(\frac{x-a}{r}\right)g_j((\mathcal{B}u_{n_k})(x))\dx\\
			&=\lim\limits_{r\rightarrow 0}\underset{a+(0,r)^d}{\dashint}f_i\left(\frac{x-a}{r}\right)\langle \nu_{x},g_j\rangle\dx\\
			&=\int\limits_{Q}^{}f_i(x)\dx\langle\nu_{a},g_j\rangle.
		\end{align*}
		%where we used the uniform continuity of $g$ in the second and third equality, and the Lebesgue point property of $a$ in the last equality. %Furthermore, for all $\chi\in\mathcal{D}$ we have again by using dominated convergence
		%\begin{align*}
		%	\lim\limits_{r\rightarrow 0}\lim\limits_{i\rightarrow \infty}\int\limits_{Q}^{}\chi(x)\cdot D^lv_{r,i}(x)\dx&=\lim\limits_{r\rightarrow 0}\lim\limits_{i\rightarrow \infty}\int\limits_{Q}^{}\chi(x)\cdot (D^lu_{n_i})(a+rx))\dx\\
		%	&=\lim\limits_{r\rightarrow 0}\lim\limits_{i\rightarrow \infty}\underset{a+(0,r)^d}{\dashint}\chi\left(\frac{1}{r}(y-a) \right)D^lu_n(y)\dy=0.
		%\end{align*}
		Keeping (\ref{eq:dlconvergence}) and the density of $(f_i)$ and $(g_j)$ in mind we choose $r_l=\frac{1}{l}$ for all $l\in\N$ and a diagonal subsequence $w_l:=v_{r_l,k_{r_l}}$ such that
		\begin{align*}
			\underset{l\rightarrow\infty}{\limsup}\|D^lw_l\|_{L^{\infty}(\T^d)}\leq R\text{ and }\mathcal{B}w_l\overset{Y}{\rightharpoonup}\nu_a.
		\end{align*}
		Now if $\nu$ has non-zero barycenter, consider the shifted Young measure $\Gamma_{-\langle\nu,\operatorname{id}\rangle}\nu$ which is generated by the sequence $\mathcal{B}u_j$ satisfying $(u_l)\subset C_c^{\infty}(\Omega)$, $\underset{l\rightarrow\infty}{\limsup}\|D^lu_l\|_{L^{\infty}(\Omega)}\leq R$, and $D^lu_l\overset{L^1(\Omega)}{\rightharpoonup}0$. By the previous case for a.e.~$a\in\Omega$ there is a sequence $(\tilde{w}_l)\subset C_c^{\infty}(\mathcal{Q})$ such that $\underset{l\rightarrow\infty}{\limsup}\|D^l\tilde{w}_l\|_{L^{\infty}(\mathcal{Q})}\leq R$ and $\mathcal{B}\bar{w}_l\overset{Y}{\rightharpoonup} \Gamma_{-\langle\nu_a,\operatorname{id}\rangle}\nu_a$. Hence, we have
		\begin{align*}
			\mathcal{B}\tilde{w}_l+\langle\nu_a,\operatorname{id}\rangle\overset{Y}{\rightharpoonup}\nu_a.
		\end{align*}
		Thus, for $g\in C(\R^m)$ we obtain
		\begin{align}
			\lim\limits_{l\rightarrow \infty}\int\limits_{\T^d}^{}g(\mathcal{B}\tilde{w}_l(x)+\langle\nu_a,\operatorname{id}\rangle)\dx=\langle\nu_a,g\rangle.\label{eq:localizationconvergence}
		\end{align}
		Note that here one needs to replace $g$ by some function in $C_0(\R^m)$, which coincides with $g$ on a ball containing the uniformly bounded functions $\mathcal{B}\tilde{w}_l(x)+\langle\nu_a,\operatorname{id}\rangle$.\\
		On the other hand, for all $\varepsilon>0$ the definition of $Q_{\mathcal{B}}^{R+\varepsilon}g$ implies
		\begin{align}
			\int\limits_{\T^d}^{}g(\mathcal{B}\tilde{w}_l(x)+\langle\nu_a,\operatorname{id}\rangle)\dx\geq Q_{\mathcal{B}}^{R+\varepsilon}g(\langle\nu_a,\operatorname{id}\rangle)\label{eq:jensentype}
		\end{align}
		for all $g\in C(\R^m)$ if $l$ is large enough. Combining (\ref{eq:localizationconvergence}) and (\ref{eq:jensentype}) and taking the limits over $l$ and $\varepsilon$ finishes the proof by using Lemma \ref{lem:qlimit}.	
	\end{proof}
\section{Weak and Measure-Valued Solutions}\label{sect:weakandmvs}
Let $\Omega\subset \R^d$ be open or the torus $\T^d$ and let $T>0$. A pair $(\rho,m)\in L^{\infty}((0,T)\times \Omega,\R^+\times \R^d)$ is called \textit{weak solution} of (\ref{eq:euler}) with initial data $(\rho_0,m_0)\in L^{\infty}(\Omega)$ if
\begin{align*}
	\int\limits_{0}^{T}\int\limits_{\Omega}^{}\partial_t\varphi\cdot m+\nabla_x\varphi:\frac{m\otimes m}{\rho}+\rho^{\gamma}\operatorname{div}_x\varphi\dx\dt+\int\limits_{\Omega}^{}\varphi(0,\cdot)\cdot m_0\dx&=0,\\
	\int\limits_{0}^{T}\int\limits_{\Omega}^{}\rho\partial_t\psi+\nabla_x\psi\cdot m\dx\dt+\int\limits_{\Omega}^{}\rho_0\psi(0,\cdot)\dx&=0
\end{align*}
for all $\varphi\in C_c^{\infty}([0,T)\times \Omega,\R^d)$ and $\psi\in C_c^{\infty}([0,T)\times \Omega)$, where the above integrals have to exist as part of the definition.\\
Define the function
\begin{align*}
	e\colon \R^+\times \R^d\to \R,\ (\rho,m)\mapsto \frac{|m|^2}{2\rho}+\frac{1}{\gamma-1}\rho^{\gamma}.
\end{align*}
A weak solution $(\rho,m)$ with initial data $(\rho_0,m_0)$ is called \textit{admissible weak solution} to (\ref{eq:euler}) if it satisfies
\begin{align}
	\int\limits_{\Omega}^{}e(\rho,m)(t,x)\dx\leq \int\limits_{\Omega}^{}e(\rho_0,m_0)\dx\label{eq:energyinequalitydefinition}
\end{align}
for a.e.~$t\in (0,T)$.\\
\\
Next, let us introduce an even weaker notion of solution:\\
A Young measure $\nu\in L^{\infty}_{\operatorname{w}}((0,T)\times \Omega,\mathcal{P}(\R^+\times \R^m))$ is called a \textit{measure-valued solution} to (\ref{eq:euler}) with initial data $(\rho_0,m_0)\in L^{\infty}(\Omega)$ if
\begin{align*}
	\int\limits_{0}^{T}\int\limits_{\Omega}^{}\partial_t\varphi\cdot \langle\nu,m\rangle+\nabla_x\varphi:\left\langle\nu,\frac{m\otimes m}{\rho}\right\rangle+\left\langle\nu,\rho^{\gamma}\right\rangle\operatorname{div}_x\varphi\dx\dt+\int\limits_{\Omega}^{}\varphi(0,\cdot)\cdot m_0\dx&=0,\\
	\int\limits_{0}^{T}\int\limits_{\Omega}^{}\langle\nu,\rho\rangle\partial_t\psi+\nabla_x\psi\cdot \langle\nu,m\rangle\dx\dt+\int\limits_{\Omega}^{}\rho_0\psi(0,\cdot)\dx&=0
\end{align*}
for all $\varphi\in C_c^{\infty}([0,T)\times \Omega,\R^d)$ and $\psi\in C_c^{\infty}([0,T)\times \Omega)$, where the above integrals have to exist as part of the definition. Also the notation
\begin{align*}
	\langle\nu,f\rangle:=&\int\limits_{\R^+\times \R^d}^{}f(\xi)\dd\nu(\xi)
\end{align*}
for any continuous function $f\in C_0(\R^m)$ and $m\in\N$ has been used.\\
We call a measure-valued solution $\nu$ with initial data $(\rho_0,m_0)$ an \textit{admissible measure-valued solution} if
\begin{align*}
	\int\limits_{\Omega}^{}\langle\nu_{(t,x)},e\rangle\dx\leq \int\limits_{\Omega}^{}e(\rho_0,m_0)\dx
\end{align*}
for a.e.~$t\in(0,T)$.\\
\\
The proof of our main result uses a detour over a linear relaxation of the Euler equations which then can be handled by more abstract arguments involving homogeneous differential operators. The relaxed Euler equations are given by
\begin{equation}
	\begin{aligned}
		\partial_t m+\operatorname{div}_xM+\nabla_xQ&=0,\\
		\partial_t\rho+\operatorname{div}_xm&=0\label{eq:linearizedeuler}
	\end{aligned}
\end{equation}
which is a linear first order PDE in the variables $(\rho,m,M,Q)$ with values in $\R^+\times \R^d\times \mathcal{S}_0^d\times \R^+$. We will call a quadruple $(\rho,m,M,Q)\in L^{\infty}((0,T)\times \T^d)$ a \textit{weak subsolution} with corresponding initial data $(\rho_0,m_0)\in L^{\infty}((0,T)\times \T^d)$ if it solves (\ref{eq:linearizedeuler}) tested by smooth functions with compact support in $[0,T)\times \T^d$.\\
Further, define the lifting map $\Theta\colon [0,\infty)\times \R^d\to \R^m$ by
\begin{align*}
	\Theta(\rho,m)=\left(\rho,m,\frac{m\ocircle m}{\rho},\rho^{\gamma}+\frac{|m|^2}{d\rho} \right).
\end{align*}
The lift $\tilde{\nu}$ of a Young measure $\nu\colon (0,T)\times \T^d\to \mathcal{P}(\R^m)$ is defined for all $f\in C_0(\R^m)$ by
\begin{align*}
	\langle \tilde{\nu}_{(t,x)},f\rangle = \langle \nu_{(t,x)},f\circ \Theta\rangle.
\end{align*}
It is straightforward to check that $\nu$ is a measure-valued solution if and only if the barycenter of its lift $\langle\tilde{\nu},\operatorname{id}\rangle$ is a weak solution of~\eqref{eq:linearizedeuler}.\\
In Section \ref{sect:necessaryconditions} we will encounter the notion of a \textit{vanishing viscosity sequence}. By this we mean a sequence $(\rho_n,m_n)\subset L^{\infty}((0,T)\times \Omega)$ of distributional solutions to the compressible Navier-Stokes equations
\begin{align*}
	\partial_tm_n+\operatorname{div}\left(\frac{m_n\otimes m_n}{\rho_n} \right)+\nabla \rho_n^{\gamma}&=\mu_n\operatorname{div}\mathbb{S}\left(\nabla \frac{m_n}{\rho_n}\right),\\
	\partial_t\rho_n+\operatorname{div}m_n&=0
\end{align*}
with initial data $(\rho_n^0,m_n^0)\in L^{\infty}(\Omega)$. Here, $(\mu_n)\subset \R^+$ is a sequence converging to zero and $\mathbb{S}$ denotes the Newtonian viscous stress tensor given by
\begin{equation*}
	\mathbb S(A)=\mu\left(A+A^t-\frac23 \operatorname{tr}A \mathbb E_d\right)+\eta \operatorname{tr}A \mathbb E_d,
\end{equation*}
with constants $\mu>0$, $\eta\geq 0$. If $(\rho_n,m_n)$ satisfies the energy inequality (\ref{eq:energyinequalitydefinition}) for a.e.~$t\in (0,T)$, we call this sequence \textit{admissible}.\\
\\
The following lemma collects some properties of admissible measure-valued solutions.
\begin{lem}\label{lem:initialvalueproperties}
	Let $\nu$ be an admissible measure-valued solution on $(0,T)\times \Omega$ with initial data $(\rho_0,m_0)\in L^{\infty}(\Omega)$. Assume $\nu$ satisfies
	\begin{align*}
		\supp\nu&\subset \{(\rho,m)\in \R^+\times \R^d\,:\, \rho\geq \eta \},\\
		\langle\nu,\rho\rangle&\leq R,\\
		\langle\nu,m\rangle&\leq R
	\end{align*}
	a.e.~on $(0,T)\times \Omega$ for some $0<\eta\leq R<\infty$.\\
	Then $\nu$ has the following properties:
	\begin{align*}
		\langle\nu,\rho\rangle&\in C([0,T],L_{\operatorname{w}}^{\gamma}(\Omega)),\\
		\langle\nu,m\rangle&\in C([0,T],L_{\operatorname{w}}^{2}(\Omega)),\\
		\left\langle\nu_{(t,\cdot)},\rho\right\rangle&\rightarrow \left\langle\nu_{(0,\cdot)},\rho\right\rangle=\rho_0\text{ in }L^{\gamma}(\Omega)\text{ as }t\rightarrow 0,\\
		\left\langle\nu_{(t,\cdot)},m\right\rangle&\rightarrow \left\langle\nu_{(0,\cdot)},m\right\rangle=m_0\text{ in }L^{2}(\Omega)\text{ as }t\rightarrow 0,\\
		\int\limits_{\Omega}^{}\left\langle\nu_{(t,x)},e\right\rangle\dx&\rightarrow\int\limits_{\Omega}^{}e(\rho_0,m_0)\dx\text{ as }t\rightarrow 0,\\
		\langle\nu,\rho\rangle&=\langle\tilde{\nu},\rho\rangle,\\
		\langle\nu,m\rangle&=\langle\tilde{\nu},m\rangle.
	\end{align*}
\end{lem}
\begin{proof}
	As $\operatorname{pr}_{\rho}\circ\Theta=\operatorname{pr}_{\rho}$ and $\operatorname{pr}_{m}\circ\Theta=\operatorname{pr}_{m}$, it is immediate that $\langle\nu,\rho\rangle=\langle\tilde{\nu},\rho\rangle$ and $\langle\nu,m\rangle=\langle\tilde{\nu},m\rangle$.\\
	Proposition \ref{prop:appendixa} below yields $\langle\nu,\rho\rangle\in C([0,T],L^{\gamma}_{\operatorname{w}}(\Omega))$ and $\langle\nu,m\rangle\in C([0,T],L^{2}_{\operatorname{w}}(\Omega))$. Hence, the definition of measure-valued solutions implies $\left\langle\nu_{(0,\cdot)},\rho\right\rangle=\rho_0$ and $\left\langle\nu_{(0,\cdot)},m\right\rangle=m_0$. Thus,
	\begin{align*}
		\liminf\limits_{t\rightarrow 0}\|\langle\nu_{(t,\cdot)},\rho\rangle\|_{L^{\gamma}(\Omega)}&\geq \|\rho_0\|_{L^{\gamma}(\Omega)},\\
		\liminf\limits_{t\rightarrow 0}\|\langle\nu_{(t,\cdot)},m\rangle\|_{L^{2}(\Omega)}&\geq \|m_0\|_{L^{2}(\Omega)}.
	\end{align*}
	Note that the function $(\rho,m)\mapsto \frac{|m|^2}{2\rho}$ is convex on $(0,\infty)\times \R^d$, hence so is the function $e$. Therefore, we can estimate using Jensen's inequality
	\begin{align}
		\underset{t\rightarrow 0}{\limsup}\int\limits_{\Omega}^{}\frac{|\langle\nu,m\rangle|^2}{2\langle\nu,\rho\rangle}(t,x)\dx\leq \underset{t\rightarrow 0}{\limsup}\int\limits_{\Omega}^{}\langle\nu,e\rangle(t,x)\dx-\underset{t\rightarrow 0}{\liminf}\int\limits_{\Omega}^{}\frac{1}{\gamma-1}\langle\nu,\rho\rangle^{\gamma}(t,x)\dx\leq \int\limits_{\Omega}^{}\frac{|m_0|^2}{2\rho_0}\dx.\label{eq:initialvalueestimate}
	\end{align}
	Observe that the set
	\begin{align*}
		\mathcal{K}:=\{(\rho,m)\in (L^{\gamma}\times L^2)(\Omega)\,:\,\eta\leq \rho\leq R\text{ and }m\leq R\text{ a.e.~on }\Omega \}
	\end{align*}
	is convex and strongly closed.\\
	We now show that the functional $F\colon \mathcal{K}\to\R,\ (\rho,m)\mapsto \int\limits_{\Omega}^{}\frac{|m|^2}{2\rho}\dx$ is strongly continuous:\\
	To this end, let $(\rho_j,m_j)\subset \mathcal{K}$ such that $\rho_j\rightarrow \rho$ in $L^{\gamma}(\Omega)$ and $m_j\rightarrow m$ in $L^2(\Omega)$. Then
	\begin{align*}
		\left|\int\limits_{\Omega}^{}\frac{|m_j|^2}{2\rho_j}-\frac{|m|^2}{2\rho}\dx \right|\leq \frac{R}{\eta}|\Omega|^{\frac{1}{2}}\|m_j-m\|_{L^2(\Omega)}+\frac{R^2}{2}|\Omega|^{1-\frac{1}{\gamma}}\cdot \frac{1}{\eta^2}\|\rho_j-\rho\|_{L^{\gamma}(\Omega)}\rightarrow 0.
	\end{align*}
	By a Hahn-Banach argument we infer that $F$ is a weakly lower semicontinuous functional on $\mathcal{K}$. Therefore, as $(\langle\nu,\rho\rangle(t,\cdot),\langle\nu,m\rangle(t,\cdot))\in\mathcal{K}$ for every $t\in [0,T]$, we obtain for $t\rightarrow 0$ that
	\begin{align*}
		\int\limits_{\Omega}^{}\frac{|\langle\nu,m\rangle|^2}{2\langle\nu,\rho\rangle}(t,x)\dx\rightarrow \int\limits_{\Omega}^{}\frac{|m_0|^2}{2\rho_0}\dx.
	\end{align*}
	This also implies by an analogous estimate as in (\ref{eq:initialvalueestimate}) that $\langle\nu,\rho\rangle(t,\cdot)\rightarrow \rho_0$ in $L^{\gamma}(\Omega)$.\\
	Now let $\varphi\in C_c^{\infty}(\Omega)$ and observe that
		\begin{align*}
			&\left|\int\limits_{\Omega}^{}\left(\frac{\langle\nu,m\rangle}{\sqrt{\langle\nu,\rho\rangle}}(t,x)-\frac{m_0}{\sqrt{\rho_0}}(x) \right)\varphi(x)\dx\right|\\
			\leq &\left|\int\limits_{\Omega}^{}\left(\langle\nu,m\rangle(t,x)-m_0(x) \right)\frac{\varphi}{\sqrt{\rho_0}}(x)\dx\right|\\
			&+\left|\int\limits_{\Omega}^{}\langle\nu,m\rangle(t,x)\left(\frac{1}{\sqrt{\langle\nu,\rho\rangle}}(t,x)-\frac{1}{\sqrt{\rho_0}}(x) \right)\varphi(x)\dx\right|\\
			\leq &\left|\int\limits_{\Omega}^{}\left(\langle\nu,m\rangle(t,x)-m_0(x) \right)\frac{\varphi}{\sqrt{\rho_0}}(x)\dx\right|+R\|\varphi\|_{L^{\infty}}\int\limits_{\Omega}^{}\frac{|\sqrt{\rho_0}(x)-\sqrt{\langle\nu,\rho\rangle}(t,x)|}{\eta}\dx\\
			\leq &\left|\int\limits_{\Omega}^{}\left(\langle\nu,m\rangle(t,x)-m_0(x) \right)\frac{\varphi}{\sqrt{\rho_0}}(x)\dx\right|+R\|\varphi\|_{L^{\infty}}\int\limits_{\Omega}^{}\frac{|\rho_0(x)-\langle\nu,\rho\rangle(t,x)|}{2\eta^{\frac{3}{2}}}\dx\rightarrow 0,
		\end{align*}
		where we used the weak convergence $\langle\nu,m\rangle(t,\cdot)\rightharpoonup m_0$. So, we have shown that (after using a standard approximation argument for the test functions) that $\frac{\langle\nu,m\rangle}{\sqrt{\langle\nu,\rho\rangle}}(t,\cdot)\rightharpoonup \frac{m_0}{\sqrt{\rho_0}}$ in $L^2(\Omega)$. Together with the already proven convergence of the $L^2$-norms this yields that $\frac{\langle\nu,m\rangle}{\sqrt{\langle\nu,\rho\rangle}}(t,\cdot)\rightarrow \frac{m_0}{\sqrt{\rho_0}}$ in $L^2(\Omega)$. From that, we can infer that $\langle\nu,m\rangle(t,\cdot)\rightarrow m_0$ in $L^2(\Omega)$. Indeed, due to the already known weak convergence, it suffices to check that the $L^2$-norms converge. We estimate
		\begin{align*}
			&\left|\int\limits_{\Omega}^{}|\langle\nu,m\rangle|^2(t,x)-|m_0|^2(x)\dx \right|\\
			\leq &\left|\int\limits_{\Omega}^{}\langle\nu,\rho\rangle(t,x)\left(\frac{|\langle\nu,m\rangle|^2}{\langle\nu,\rho\rangle}(t,x)-\frac{|m_0|^2}{\rho_0}(x)\right)\dx \right|+\left|\int\limits_{\Omega}^{}\frac{|m_0|^2}{\rho_0}(x)\left(\langle\nu,\rho\rangle(t,x)-\rho_0(x)\right)\dx \right|\\
			\leq &\frac{2R^2}{\sqrt{\eta}}\int\limits_{\Omega}^{}\left|\frac{\langle\nu,m\rangle}{\sqrt{\langle\nu,\rho\rangle}}(t,x)-\frac{m_0}{\sqrt{\rho_0}}(x)\right|\dx+\frac{R^2}{\eta}\int\limits_{\Omega}^{}|\langle\nu,\rho\rangle(t,x)-\rho_0(x)|\dx.
		\end{align*}
		The right hand side now goes to zero as $t\rightarrow 0^+$ by the boundedness of $\Omega$ and the already shown strong convergence of $\langle\nu,\rho\rangle(t,\cdot)$ and $\frac{\langle\nu,m\rangle}{\sqrt{\langle\nu,\rho\rangle}}(t,\cdot)$.\\
	For the convergence involving $e$ we obtain from Jensen's inequality that
	\begin{align*}
		\underset{t\rightarrow 0}{\liminf}\int\limits_{\Omega}^{}\langle\nu,e\rangle(t,x)\dx&\geq\lim\limits_{t\rightarrow 0}\int\limits_{\Omega}^{}\frac{|\langle\nu,m\rangle|^2}{2\langle\nu,\rho\rangle}(t,x)\dx+\lim\limits_{t\rightarrow 0}\int\limits_{\Omega}^{}\frac{1}{\gamma-1}\langle\nu,\rho\rangle^{\gamma}(t,x)\dx\\
		&=\int\limits_{\Omega}^{}e(\rho_0,m_0)\dx.
	\end{align*}
	Combining this with the admissibility of $\nu$ finishes the proof.
\end{proof}
We used the following proposition in the previous proof. It is a version for $L^p$ of a result shown in Appendix A of \cite{DLSz10}. The proof follows analogously to that for $L^2$ and hence will be omitted.
\begin{prop}\label{prop:appendixa}
	Let $1<p<\infty$ and $\Omega\subset \R^d$ open. If $a\in L^{\infty}((0,T),L^p(\Omega))$ and $b,c\in L^1_{\operatorname{loc}}((0,T)\times \Omega)$ solve
	\begin{align*}
		\partial_t a+\operatorname{div}_x b+\nabla_x c=0.
	\end{align*}
	Then, after redefining on a set of measure zero, it holds that $a\in C([0,T],L^p_{\operatorname{w}}(\Omega))$.
\end{prop}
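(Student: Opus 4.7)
The plan is to prove weak $L^p$-continuity of $a(t,\cdot)$ by first showing $t$-continuity of the pairings $\langle a(t,\cdot),\varphi\rangle$ against smooth test functions, and then extending to all of $L^{p'}(\Omega)$ by density together with the uniform bound $\|a(t,\cdot)\|_{L^p}\leq C$. The hypothesis $1<p<\infty$ will enter in two places: separability of $L^{p'}(\Omega)$, which is needed to select a countable dense family of test functions, and the Riesz identification of continuous linear functionals on $L^{p'}$ with elements of $L^p$.

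First I will fix $\varphi\in C_c^\infty(\Omega)$ (of the appropriate scalar or vector type) and pair the equation with test functions of the separable form $\chi(t)\varphi(x)$, $\chi\in C_c^\infty(0,T)$. A direct computation shows that the distributional time derivative of $F_\varphi(t):=\int_\Omega a(t,\cdot)\cdot\varphi$ equals $G_\varphi(t):=\int_\Omega(b:\nabla\varphi+c\,\mathrm{div}\,\varphi)$, which lies in $L^1_{\mathrm{loc}}(0,T)$ since $b,c\in L^1_{\mathrm{loc}}((0,T)\times\Omega)$ and $\varphi$ is smooth with compact support in $\Omega$. Hence $F_\varphi$ coincides a.e.\ with an absolutely continuous function $\tilde F_\varphi$ on $(0,T)$, and the bound $|\tilde F_\varphi(t)|\leq C\|\varphi\|_{L^{p'}}$, valid at a.e.\ $t$, lets me extend this representative continuously to $[0,T]$.

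Next I will pick a countable family $\{\varphi_n\}\subset C_c^\infty(\Omega)$ dense in $L^{p'}(\Omega)$, apply the previous step to each $\varphi_n$, and let $N\subset[0,T]$ be the union of the associated exceptional null sets. For every $t\notin N$, the map $\varphi_n\mapsto\tilde F_{\varphi_n}(t)$ is a bounded linear functional on a dense subspace of $L^{p'}(\Omega)$ with operator norm at most $C$; it extends uniquely to $L^{p'}(\Omega)$ and is represented, via the Riesz theorem, by some $\tilde a(t,\cdot)\in L^p(\Omega)$ with $\|\tilde a(t,\cdot)\|_{L^p}\leq C$, and by construction $\tilde a(t,\cdot)=a(t,\cdot)$ for a.e.\ $t$. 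For $t\in N$, I will define $\tilde a(t,\cdot)$ as the weak $L^p$-limit of $\tilde a(s_k,\cdot)$ along any sequence $s_k\to t$ with $s_k\notin N$; existence and independence of the chosen sequence follow from continuity of each $\tilde F_{\varphi_n}$ combined with weak $L^p$ compactness furnished by the uniform bound.

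To conclude that $\tilde a\in C([0,T],L^p_{\mathrm w}(\Omega))$, I will run a $3\varepsilon$-argument: for $\varphi\in L^{p'}(\Omega)$ and $\varepsilon>0$, pick $\varphi_n$ with $\|\varphi-\varphi_n\|_{L^{p'}}<\varepsilon$ and estimate
\begin{equation*}
\bigl|\langle\tilde a(t)-\tilde a(s),\varphi\rangle\bigr|\leq 2C\varepsilon + \bigl|\tilde F_{\varphi_n}(t)-\tilde F_{\varphi_n}(s)\bigr|,
\end{equation*}
so that $\limsup_{s\to t}$ of the left-hand side is at most $2C\varepsilon$, hence vanishes. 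The main obstacle is the simultaneous construction of an $L^p$ representative $\tilde a(t,\cdot)$ outside a single null set, and this is precisely where $p>1$ is essential: if $p=1$, the limiting functional on $L^\infty$ would in general only be a finite Radon measure rather than an $L^1$ function, and the statement as posed would fail without further structural assumptions.
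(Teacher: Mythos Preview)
Your approach is the standard one and is precisely the argument the paper has in mind: the paper omits the proof and refers to Appendix~A of \cite{DLSz10} for the $L^2$ case, where exactly this scheme---absolute continuity of $t\mapsto\langle a(t),\varphi\rangle$ for smooth compactly supported $\varphi$, followed by a density-plus-uniform-bound argument to reach all of $L^{p'}$---is carried out.

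There is, however, one step that is not justified as written. You assert that the bound $|\tilde F_\varphi(t)|\le C\|\varphi\|_{L^{p'}}$ ``lets [you] extend this representative continuously to $[0,T]$''. Boundedness alone does not do this: a function that is absolutely continuous on every compact subinterval of $(0,T)$ and bounded there need not have limits at the endpoints when its derivative lies only in $L^1_{\mathrm{loc}}(0,T)$; consider $\tilde F_\varphi(t)=\sin(\log t)$. What is actually required is $G_\varphi\in L^1(0,T)$, not merely $L^1_{\mathrm{loc}}(0,T)$, so that $\tilde F_\varphi$ is absolutely continuous on the closed interval and the endpoint values are well defined. Under the hypothesis $b,c\in L^1_{\mathrm{loc}}((0,T)\times\Omega)$ taken literally (locality also in time), this fails, and in fact the proposition itself would be false with that reading. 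The intended hypothesis---and the one satisfied in every application in the paper, where $b,c$ are in $L^\infty$---is locality only in the spatial variable, i.e.\ $b,c\in L^1((0,T)\times K)$ for every compact $K\subset\Omega$. Under that reading $G_\varphi\in L^1(0,T)$ follows immediately from Fubini, and your argument goes through without change. You should make this point explicit rather than appeal to boundedness.
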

\section{Proof of the main results}\label{sect:proofofthemainresults}
	In this section we prove the two main results Theorem \ref{theo:weakmainresult} and Theorem \ref{theo:mainresult}. The proof will be split into several smaller steps which will be formulated as individual propositions.\\
	A crucial point in the proof is to consider the relaxed Euler equations (\ref{eq:linearizedeuler}) and use Theorem \ref{theo:fromsubsolntosequence}. For that note that the system (\ref{eq:linearizedeuler}) can be rephrased as a linear homogeneous differential operator
	\begin{align*}
		\mathcal{A}_E\colon C^{\infty}\left(\R^{d+1},\R^{\left(1+\frac{d}{2}\right)(d+1)}\right)\to C^{\infty}\big(\R^{d+1},\R^{d+1}\big)
	\end{align*}
	of order one. Actually, here we will work on the level of potentials. In Proposition 4.1 in \cite{G20} an operator
	\begin{align*}
		\mathcal{B}_E\colon C^{\infty}\left(\R^{d+1},\R^{\frac{1}{4}(d+1)^2d^2}\right)\to C^{\infty}\left(\R^{d+1},\R^{\left(1+\frac{d}{2}\right)(d+1)}\right)
	\end{align*}
	is constructed such that for all $z\in C^{\infty}(\R^{d+1})$ it holds that $\mathcal{A}_Ez=0$ if and only if there exists some $u\in C^{\infty}(\R^{d+1})$ with $z=\mathcal{B}_Eu$. It is not immediate that $\mathcal{B}_E$ is a potential in the sense of (\ref{eq:potentialdefinition}). Nevertheless, in the case $d=2$ we will show in Section \ref{sect:application} that, indeed, $\mathcal{B}_E$ is a potential for $\mathcal{A}_E$ in that sense. For the present section the only important property of $\mathcal{B}_E$ is that it is homogeneous of order two.\\
	Recall from (\ref{eq:gammadefinition}) that we only consider the case $\gamma=1+\frac{2}{d}$.
	%Let $\mathcal{B}_E$ denote the potential of the linearized Euler system which is realized by the differential operator $\mathcal{A}_E$ on the $N$-dimensional state space (define what is $N$ here). We will use the notation $a\circ a:=a\otimes a-\frac{1}{d}|a|^2´\mathbb{E}_d$ for $a\in \R^d$. Let the density $p$ be a convex function.\\
	%Let us denote by $e\colon (\rho,m,M,Q)\mapsto \frac{d}{2}e_{\operatorname{kin}}(\rho,m,M)+p(\rho)$ the total energy function on the lifted phase space.
	%Define $e\colon [0,\infty)\times \R^d,\ (\rho,m)\mapsto\frac{|m|^2}{2\rho}+\frac{1}{\gamma-1}\rho^{\gamma}=\frac{d}{2}\left(\frac{|m|^2}{d\rho}+\rho^{\gamma} \right)$.
	Define the map
	\begin{align*}
		e_{\operatorname{kin}}(\rho,m,M):=\frac{d}{2}\lambda_{\operatorname{max}}\left(\frac{m\otimes m}{\rho}-M \right).
	\end{align*}
	The notation above indicating the \textit{kinetic energy density} is taken from \cite{EJT20} which itself is adopted from \cite{DLSz10}. The following theorem is a consequence of the genuine compressible convex integration method developed in \cite{EJT20}, which is a generalization of the results in \cite{M20} to non-constant energies. This will be used as a black-box for convex integration in the proof of Proposition \ref{prop:stepthree}. We only state this result on the torus which will be our use case. Note that the assumption of our space-dimension $d$ being greater than one is important here, since convex integration is not feasible in only one space-dimension.
	\begin{theo}\label{theo:EJT}
		Let $d\geq 2$. Let $(\rho',m',M',Q')\in C((0,T)\times \T^d)$ be a subsolution in $(0,T)\times \T^d$ such that $(\rho',m')\in C([0,T],(L^{\gamma}\times L^2)_{\operatorname{w}}(\T^d))$ and $Q'>p(\rho')+\frac{2}{d}e_{\operatorname{kin}}(\rho',m',M')$
		in $(0,T)\times \T^d$. Then there exist infinitely many weak solutions $(\rho,m)\in C([0,T],(L^{\gamma}\times L^2)_{\operatorname{w}}(\T^d))$ of (\ref{eq:euler}) such that
		\begin{align*}
			\rho&=\rho'\text{ on }(0,T)\times \T^d,\\
			Q'&=\rho'^{\gamma}+\frac{|m|^2}{d\rho'}\text{ almost everywhere on }(0,T)\times \T^d,\\
			m(0,\cdot)&=m'(0,\cdot).
		\end{align*}
	\end{theo}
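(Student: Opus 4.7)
The plan is to deploy the compressible convex integration scheme on the relaxed Euler system. I would first observe that a pair $(\rho, m) \in L^{\infty}$ is a weak solution of \eqref{eq:euler} if and only if there exist auxiliary fields $M \in L^{\infty}((0,T) \times \T^d; S_0^d)$ and $Q \in L^{\infty}((0,T) \times \T^d)$ such that $(\rho, m, M, Q)$ solves the linearized system \eqref{eq:linearizedeuler} and the pointwise constraints $M = \frac{m \ocircle m}{\rho}$ and $Q = \rho^{\gamma} + \frac{|m|^2}{d\rho}$ hold almost everywhere. A short computation shows that these two constraints together are equivalent to the single pointwise identity $e_{\operatorname{kin}}(\rho, m, M) = \frac{d}{2}(Q - p(\rho))$. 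Since $\rho'$ and $Q'$ are prescribed by the hypothesis, I would fix $\rho := \rho'$ and $Q := Q'$ throughout and search for $(m, M)$ solving the linearized system with these coefficients and satisfying this pointwise energy equality.

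Second, I would run the Baire category argument of De Lellis and Sz\'ekelyhidi in this setting. Let $X_0$ denote the set of continuous \emph{strict} subsolutions, i.e.\ pairs $(m, M)$ solving the linearized system with $\rho = \rho'$ and $Q = Q'$, coinciding with $(m', M')$ in a neighborhood of $\{t = 0\}$ (in particular matching the initial datum $m'(0, \cdot)$), and satisfying the open constraint $e_{\operatorname{kin}}(\rho', m, M) < \frac{d}{2}(Q' - p(\rho'))$. This set is nonempty by assumption. Endow $X_0$ with a metric compatible with weak $L^2$-convergence, let $X$ be its closure, and consider the functional $I(m, M) := \int_0^T \int_{\T^d} \frac{|m|^2}{2\rho'} \dx \dt$. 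Since $\frac{d}{2}(Q' - p(\rho'))$ is an a priori upper bound for $e_{\operatorname{kin}}(\rho', \cdot, \cdot) \ge \frac{|m|^2}{2\rho'}$ and $I$ is weakly lower semicontinuous, the set of continuity points of $I$ in $X$ is a dense $G_{\delta}$. Any such continuity point forces $e_{\operatorname{kin}}(\rho', m, M) = \frac{d}{2}(Q' - p(\rho'))$ a.e., which in turn forces $M = \frac{m \ocircle m}{\rho'}$ a.e.\ and makes $(\rho', m)$ a genuine weak solution of \eqref{eq:euler}; density then yields infinitely many such solutions.

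The main obstacle is the associated perturbation property, which I expect to be the hard step: given a strict subsolution with a quantitative positive energy gap, one must construct $(\tilde m, \tilde M)$, compactly supported in $(0, T) \times \T^d$ and solving the homogeneous linearized system, such that $(m + \tilde m, M + \tilde M) \in X_0$ and $I$ increases by a definite amount. The classical route is to superpose localized plane waves $(\bar m, \bar M) h(x \cdot \xi)$ lying in the wave cone of $\mathcal A_E$, exploiting that the $\Lambda$-convex hull of the constitutive set $\{(\rho, m, M, Q) : e_{\operatorname{kin}}(\rho, m, M) = \frac{d}{2}(Q - p(\rho))\}$ is precisely the set of subsolutions, so oscillations can increase $|m|^2/(2\rho')$ without leaving $X_0$. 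Because the compressible wave cone is strictly smaller than the incompressible one and because $Q'$ is fixed rather than free, the incompressible De Lellis--Sz\'ekelyhidi plane waves are insufficient and one must invoke the genuine compressible construction of \cite{EJT20}. Finally, the compact temporal support of the perturbations preserves the initial trace $m(0, \cdot) = m'(0, \cdot)$, concluding the proof.
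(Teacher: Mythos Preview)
Your proposal is correct and is essentially an expanded version of the paper's own proof: the paper establishes this theorem by a one-line citation to Theorem~4.2 in \cite{EJT20}, and what you have written is a faithful outline of the Baire category convex integration argument carried out in that reference. In particular, your identification of the constitutive set, the functional $I$, the perturbation property as the crux, and the compact-in-time support of the oscillations to preserve $m(0,\cdot)=m'(0,\cdot)$ all match the scheme of \cite{EJT20}; there is nothing further to compare.
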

	\begin{proof}
		This follows from Theorem 4.2 in \cite{EJT20}.
	\end{proof}
	In the next lemma we collect some properties of the kinetic energy density.
	\begin{lem}\label{lem:lemmatwopointtwo}
		It holds that:
		\begin{itemize}
			\item The map $e_{\operatorname{kin}}\colon \R^+\times \R^d \times \mathcal{S}_0^d\to \R$ is convex.
			\item The inequality $\frac{|m|^2}{2\rho}\leq e_{\operatorname{kin}}(\rho,m,M)$ holds, with equality if and only if $M=\frac{m\otimes m}{\rho}-\frac{|m|^2}{d\rho}\mathbb{E}_d$.
			\item For all $(\rho,m,M)\in \R^+\times \R^d\times \in S_0^d$ we have $|M|\leq \frac{2(d-1)}{d}e_{\operatorname{kin}}(\rho,m,M)$.
		\end{itemize}
	\end{lem}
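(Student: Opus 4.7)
The plan is to treat the three items in turn, exploiting that $\lambda_{\max}$ of a symmetric matrix admits the variational formula $\lambda_{\max}(B)=\sup_{|v|=1}v^{T}Bv$ and that the argument matrix $A(\rho,m,M):=\frac{m\otimes m}{\rho}-M$ has trace $\frac{|m|^{2}}{\rho}$ because $M\in S_{0}^{d}$.

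For convexity I would rewrite
\begin{align*}
e_{\operatorname{kin}}(\rho,m,M)=\frac{d}{2}\sup_{|v|=1}\Bigl(\frac{(m\cdot v)^{2}}{\rho}-v^{T}Mv\Bigr),
\end{align*}
and note that for each fixed unit vector $v$ the map $(\rho,m)\mapsto \frac{(m\cdot v)^{2}}{\rho}$ is convex on $\R^{+}\times \R^{d}$ (it is the perspective of the convex function $s\mapsto s^{2}$), while $M\mapsto -v^{T}Mv$ is linear. Thus each function under the supremum is jointly convex in $(\rho,m,M)$, and the supremum of a family of convex functions is again convex.

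For the inequality in (ii), since $A(\rho,m,M)$ is symmetric with trace $\frac{|m|^{2}}{\rho}$, the standard estimate $\lambda_{\max}(A)\geq \frac{\operatorname{tr}(A)}{d}$ gives $e_{\operatorname{kin}}(\rho,m,M)\geq \frac{|m|^{2}}{2\rho}$. Equality forces all eigenvalues of $A$ to coincide, i.e.\ $A=c\mathbb{E}_{d}$ for some scalar $c$; taking traces again and using $\operatorname{tr}M=0$ gives $c=\frac{|m|^{2}}{d\rho}$, and solving for $M$ yields $M=\frac{m\otimes m}{\rho}-\frac{|m|^{2}}{d\rho}\mathbb{E}_{d}$, as claimed.

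Item (iii) is the one requiring a little more care, and I expect it to be the only mildly non-trivial step. Let $\mu_{1}\geq\cdots\geq\mu_{d}$ be the eigenvalues of $M$; since $M\in S_{0}^{d}$ one has $\sum_{i}\mu_{i}=0$ and $|M|=\max(\mu_{1},-\mu_{d})$. Testing against an eigenvector $v$ corresponding to $\mu_{d}$ gives
\begin{align*}
\lambda_{\max}(A)\geq v^{T}Av=\frac{(m\cdot v)^{2}}{\rho}-\mu_{d}\geq -\mu_{d},
\end{align*}
so $-\mu_{d}\leq \frac{2}{d}e_{\operatorname{kin}}$. For the upper eigenvalue, trace-freeness yields $\mu_{1}=-(\mu_{2}+\cdots+\mu_{d})\leq -(d-1)\mu_{d}$, hence $\mu_{1}\leq (d-1)(-\mu_{d})\leq \frac{2(d-1)}{d}e_{\operatorname{kin}}$. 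Combining both bounds gives $|M|\leq \frac{2(d-1)}{d}e_{\operatorname{kin}}(\rho,m,M)$, which completes the lemma.
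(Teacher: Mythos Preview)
Your argument is correct in all three parts. The paper itself does not give a proof but simply cites Lemma~2.2 in \cite{EJT20}, so your proposal actually supplies more than the paper does: a clean self-contained argument via the variational formula $\lambda_{\max}(B)=\sup_{|v|=1}v^{T}Bv$, the perspective-function convexity of $(m\cdot v)^{2}/\rho$, the trace bound $\lambda_{\max}\geq \operatorname{tr}/d$ for item~(ii), and the trace-free eigenvalue estimate $\mu_{1}\leq -(d-1)\mu_{d}$ for item~(iii). One small point you might make explicit for completeness: in the last step you implicitly use $e_{\operatorname{kin}}\geq 0$ (which follows from item~(ii)) and $d\geq 2$ so that $\frac{2}{d}\leq\frac{2(d-1)}{d}$ when taking the maximum of the two bounds; both are true in the paper's setting, but stating them removes any doubt.
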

	\begin{proof}
		See Lemma 2.2 in \cite{EJT20}.
	\end{proof}
	Let us now discuss the first step towards proving our main results Theorem \ref{theo:weakmainresult} and Theorem \ref{theo:mainresult}.
	\begin{prop}\label{theo:stepone}
		Let $T>0$. Let $\nu$ be an admissible measure-valued solution of (\ref{eq:euler}) with initial data $(\rho_0,m_0)\in L^{\infty}(\T^d)$. Suppose $\nu$ fulfills the following conditions:
		\begin{itemize}
			\item There exists $\eta>0$ such that
			\begin{align*}
				\supp\left(\tilde{\nu}_{(t,x)}\right)\subset\left\{(\rho,m,M,Q)\,:\, \rho\geq \eta\text{ and }Q=\frac{2}{d}\langle\nu_{(t,x)},e\rangle\right\}
			\end{align*}
			for a.e.~$(t,x)\in (0,T)\times\T^d$.
			\item The map $(t,x)\mapsto \langle\nu_{(t,x)},e\rangle$ is continuous on $[0,T]\times \T^d$.
			\item There exists $R>0$ such that the lift $\tilde{\nu}$ satisfies $\langle\tilde{\nu}_{(t,x)},f\rangle\geq Q^{R}_{\mathcal{B}_E}f(\langle\tilde{\nu}_{(t,x)},\operatorname{id}\rangle)$ for a.e.~$(t,x)\in (0,T)\times\T^d$ and all $f$ continuous.
			\item The lift $\tilde{\nu}$ satisfies $\langle\tilde{\nu},\operatorname{id}\rangle=\sigma+\mathcal{B}_Ew$ for some $w\in W^{2,\infty}((0,T)\times \T^d)$ and some $\sigma\in C([0,T]\times\T^d)$.
		\end{itemize}
		Then the lifted measure $\tilde{\nu}$ is generated by a uniformly bounded sequence of subsolutions $(\rho_j,m_j,M_j,Q_j)\in C([0,T]\times \T^d)\cap C^{\infty}((0,T)\times \T^d)$ with the property that:
		\begin{align*}
			%(\rho_j,m_j,Q_j)&\in C([0,T],(L^{\gamma}\times L^2\times L^1)_w(\T^d)),\\
			%\|(\rho_j,m_j,M_j,Q_j)\|_{L^{\infty}}&\leq \tilde{R},\\
			\rho_j&\geq \frac{\eta}{4}\text{ on }(0,T)\times \T^d,\\
			\frac{2}{d}e_{\operatorname{kin}}(\rho_j,m_j,M_j)+\rho_j^{\gamma}&< Q_j\text{ on }(0,T)\times 
			\T^d,\\
			\left|Q_j(t,x)-\frac{2}{d}\langle\nu_{(t,x)},e\rangle\right|&\leq \frac{1}{j}\text{ for all }(t,x)\in[0,T]\times \T^d,\\
			\rho_j(0,\cdot)&\rightarrow \rho_0\text{ in }L^{\gamma}(\T^d),\\
			m_j(0,\cdot)&\rightarrow m_0\text{ in }L^{2}(\T^d).
		\end{align*}
	\end{prop}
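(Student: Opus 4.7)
The strategy is to apply the Fonseca--M\"uller type Theorem \ref{theo:fromsubsolntosequence} to obtain an initial generating sequence of subsolutions, upgrade its convergence via the $L^\infty$-truncation technique of \cite{G20}, mollify to gain smoothness, and finally perturb the generalized pressure to obtain the strict inequality. More concretely, the third and fourth hypotheses of the proposition (the Jensen-type inequality involving $Q_{\mathcal{B}_E}^R$ and the potential form of the barycenter) are exactly what Theorem \ref{theo:fromsubsolntosequence} requires when applied to the lifted Young measure $\tilde{\nu}$ on the space-time domain $(0,T)\times\T^d$. This produces $(u_j)\subset C_c^\infty((0,T)\times\T^d)$ with $\limsup_j\|D^2u_j\|_{L^\infty}\leq R$, $D^2u_j\rightharpoonup 0$ in $L^1$, and $\sigma+\mathcal{B}_E(w+u_j)\overset{Y}{\rightharpoonup}\tilde{\nu}$. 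Since $\sigma+\mathcal{B}_Ew$ is the barycenter of the lift (hence a weak subsolution) and $\mathcal{B}_Eu_j\in\ker\mathcal{A}_E$, the quadruples $(\rho_j^{(0)},m_j^{(0)},M_j^{(0)},Q_j^{(0)}):=\sigma+\mathcal{B}_E(w+u_j)$ are uniformly bounded weak subsolutions of (\ref{eq:linearizedeuler}).

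The support hypothesis forces $(\rho_j^{(0)},\ldots,Q_j^{(0)})\to K_{(t,x)}$ in measure, where
$$K_{(t,x)}:=\bigl\{(\rho,m,M,Q):\rho\geq\eta,\ Q=\tfrac{2}{d}\langle\nu_{(t,x)},e\rangle\bigr\}.$$
Convergence in measure will be insufficient for the convex integration scheme used later, so I would next invoke the truncation technique of \cite{G20}; its applicability rests on precisely the two features already in hand, namely the $L^\infty$-bound on $D^2u_j$ and the fact that $\mathcal{B}_E$ is of order two. This yields a modified sequence of potentials $\tilde{u}_j\in C_c^\infty$, still generating $\tilde{\nu}$, whose corresponding subsolution $(\rho_j^{(1)},\ldots,Q_j^{(1)})$ converges \emph{uniformly} to $K$. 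Using the assumed continuity of $(t,x)\mapsto\langle\nu,e\rangle$, this gives for large $j$ both $\rho_j^{(1)}\geq\eta/2$ and $\sup|Q_j^{(1)}-\tfrac{2}{d}\langle\nu,e\rangle|\leq \tfrac{1}{2j}$.

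Next I mollify in space-time with parameter $\delta_j\downarrow 0$ chosen so that the previous bounds degrade by at most a factor of two (so $\rho_j\geq\eta/3$ and $|Q_j-\tfrac{2}{d}\langle\nu,e\rangle|\leq 1/j$). Because $\sigma\in C([0,T]\times\T^d)$, $w\in W^{2,\infty}$ and $\tilde{u}_j\in C_c^\infty$, the mollified sequence belongs to $C([0,T]\times\T^d)\cap C^\infty((0,T)\times\T^d)$ and still solves (\ref{eq:linearizedeuler}). To enforce the strict kinetic-energy inequality, observe that on the support of $\tilde\nu$ we have $M=\frac{m\ocircle m}{\rho}$, so that $\frac{m\otimes m}{\rho}-M=\frac{|m|^2}{d\rho}\mathbb{E}_d$ and therefore $\tfrac{2}{d}e_{\operatorname{kin}}(\rho,m,M)+\rho^\gamma=Q$ exactly on $K$; after the previous steps the difference $Q_j-\bigl(\tfrac{2}{d}e_{\operatorname{kin}}(\rho_j,m_j,M_j)+\rho_j^\gamma\bigr)$ is uniformly small but of unspecified sign. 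Adding a strictly positive time-only constant $\alpha_j\downarrow 0$ to $Q_j$ preserves (\ref{eq:linearizedeuler}) (since $\nabla_x\alpha_j=0$) and the subsolution property, restores the strict inequality, and keeps $|Q_j-\tfrac{2}{d}\langle\nu,e\rangle|\leq 1/j$. The initial-data convergences in $L^\gamma$ and $L^2$ then follow from Lemma \ref{lem:initialvalueproperties} applied to $\tilde\nu$, combined with the identifications $\langle\tilde{\nu},\rho\rangle=\langle\nu,\rho\rangle$ and $\langle\tilde{\nu},m\rangle=\langle\nu,m\rangle$ and the uniform $L^\infty$-bounds established throughout.

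The main obstacle is the truncation step: it must simultaneously respect the lower bound $\rho\geq\eta$ and the equality constraint $Q=\tfrac{2}{d}\langle\nu,e\rangle$, while keeping the modified functions in potential form with uniformly bounded second derivatives and without destroying the Young-measure generation of $\tilde{\nu}$. The continuity of $\langle\nu,e\rangle$ is used here to turn the space-time varying equality into a target amenable to the fixed-target truncation machinery of \cite{G20}; absent that continuity one could only expect convergence in measure, which would be insufficient for Theorem \ref{theo:EJT} in the final convex integration step.
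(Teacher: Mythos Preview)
Your overall strategy matches the paper's: apply Theorem \ref{theo:fromsubsolntosequence}, then the truncation from \cite{G20}, then mollify, then add a small constant to $Q$. There is, however, a genuine gap in the truncation step.

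The truncation results of \cite{G20} (Corollaries 2.3--2.5) require the target sets $K_{(t,x)}$ to be \emph{convex} and \emph{compact}, and uniform proximity to $K$ is all they deliver. Your set
\[
K_{(t,x)}=\bigl\{(\rho,m,M,Q):\rho\geq\eta,\ Q=\tfrac{2}{d}\langle\nu_{(t,x)},e\rangle\bigr\}
\]
is not compact (no constraint on $m,M$), and even after intersecting with a ball it constrains only the $\rho$- and $Q$-coordinates. Consequently, uniform proximity to this $K$ tells you that $\rho_j^{(1)}\geq\eta/2$ and $Q_j^{(1)}\approx\tfrac{2}{d}\langle\nu,e\rangle$, but says nothing whatsoever about $e_{\operatorname{kin}}(\rho_j^{(1)},m_j^{(1)},M_j^{(1)})$. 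Your subsequent assertion that ``the difference $Q_j-\bigl(\tfrac{2}{d}e_{\operatorname{kin}}(\rho_j,m_j,M_j)+\rho_j^\gamma\bigr)$ is uniformly small'' is therefore unjustified: that the support of $\tilde\nu$ satisfies $M=\frac{m\ocircle m}{\rho}$ only gives convergence \emph{in measure} of $M_j-\frac{m_j\ocircle m_j}{\rho_j}$ to zero, which the truncation does not upgrade because your $K$ does not encode it.

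The fix is to include the convex constraint in the target set for truncation. The paper takes
\[
K_{(t,x)}=\Bigl\{\tfrac{2}{d}e_{\operatorname{kin}}(\rho,m,M)+\rho^{\gamma}\leq Q,\ Q=\tfrac{2}{d}\langle\nu_{(t,x)},e\rangle\Bigr\}\cap\bigl([\tfrac{\eta}{2},\infty)\times\R^d\times S_0^d\times\R^+\bigr)\cap\overline{B_{\tilde{R}+\|\sigma\|_{L^\infty}}(0)}.
\]
Convexity follows from Lemma \ref{lem:lemmatwopointtwo}; compactness from the explicit ball; $\supp(\tilde\nu_{(t,x)})\subset K_{(t,x)}$ since on the support equality holds in the energy inequality. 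One must also verify non-emptiness and uniform continuity of $(t,x)\mapsto K_{(t,x)}$ in Hausdorff distance (the paper does this via Lemma \ref{lem:hddistance}, using the continuity of $\langle\nu,e\rangle$). After truncation with this $K$, uniform proximity gives directly $\tfrac{2}{d}e_{\operatorname{kin}}(\rho_j,m_j,M_j)+\rho_j^\gamma\leq Q_j+O(1/j)$ via the Lipschitz bound of the convex function $e_{\operatorname{kin}}$, and then a constant shift of $Q_j$ yields the strict inequality as you intended. Your handling of the initial data is also too brisk: the paper uses a time-shifted mollifier supported in $(-1,0)$, rescales time to recover the full interval $[0,T]$, and chooses $\varepsilon_j$ smaller than the first time at which $g_j\neq w$, so that at $t=0$ the mollified sequence samples only $\langle\tilde\nu,\id\rangle$; together with Lemma \ref{lem:initialvalueproperties} this gives the strong $L^\gamma\times L^2$ convergence.
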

	%\begin{rem}
	%	Equivalently $\supp(\tilde{\nu})\subset \{\rho\geq \eta\text{ and }Q=\frac{2}{d}\langle \nu,e\rangle \}$ by support properties.
	%\end{rem}
	Recall that we introduced the total energy density $e\colon (\rho,m)\mapsto\frac{1}{\gamma-1}\rho^{\gamma}+\frac{|m|^2}{2\rho}$.
	\begin{proof}
		From Theorem \ref{theo:fromsubsolntosequence} we obtain a sequence $(u_j)\subset C_c^{\infty}((0,T)\times\T^d)$ such that
		\begin{align*}
			\underset{j\rightarrow\infty}{\limsup}\|D^2u_j\|_{L^{\infty}((0,T)\times\T^d)}\leq R,\ \ \ \  D^2u_j\overset{L^1}{\rightharpoonup}0,\ \ \ \ \mathcal{B}_Eu_j+\langle \tilde{\nu},\id\rangle\overset{Y}{\rightharpoonup}\tilde{\nu}.
		\end{align*}
		Since $\langle\tilde{\nu},\id\rangle=\sigma+\mathcal{B}_Ew$, we consider the sequence $(u_j+w)\subset W^{2,\infty}((0,T)\times\T^d)$. This sequence satisfies
		\begin{align}
			\|D^2(u_j+w)\|_{L^{\infty}((0,T)\times \T^d)}\leq R',\ \ \ \ D^2(u_j+w)\overset{L^1}{\rightharpoonup}D^2w,\ \ \ \ \mathcal{B}_E(u_j+w)\overset{Y}{\rightharpoonup}\tau_{-\sigma}\tilde{\nu}\label{eq:arxivresultconditions}
		\end{align}
		for some $R'>0$. We used Lemma \ref{lem:measureproperties}, where also the notion of the shifted Young measure $\tau_{-\sigma}\tilde{\nu}$ has been defined. In particular, $\|\mathcal{B}_E(u_j+w)\|_{L^{\infty}((0,T)\times\T^d)}\leq \tilde{R}$ for some $\tilde{R}>0$. Define $\bar{R}:=\tilde{R}+\|\sigma\|_{L^{\infty}}+\eta^{\gamma}-\left(\frac{\eta}{2} \right)^{\gamma}$ and define the set
		\begin{align*}
			K_{(t,x)}:=&\left\{(\rho,m,M,Q) \,:\, \frac{2}{d}e_{\operatorname{kin}}(\rho,m,M)+\rho^{\gamma}\leq Q\text{ and }Q= \frac{2}{d}\left\langle\nu_{(t,x)},e\right\rangle \right\}\\
			&\cap \left(\left[\frac{\eta}{2},\infty\right)\times \R^d\times S_0^d\times\R^+\right) \cap [-\bar{R},\bar{R}]^N,
		\end{align*}
		where $N:=\left(1+\frac{d}{2} \right)\cdot (d+1)$. The definition of the lifted measure together with Lemma \ref{lem:lemmatwopointtwo} and the Fundamental Theorem of Young Measure Theory, cf.~for example \cite{FM99} Theorem 2.2, imply that $\supp\left(\tilde{\nu}_{(t,x)} \right)\subset K_{(t,x)}$ for a.e.~$(t,x)\in (0,T)\times \T^d$, since its generating sequence $\sigma+\mathcal{B}_E(u_j+w)$ is uniformly bounded by $\tilde{R}+\|\sigma\|_{L^{\infty}([0,T]\times \T^d)}$.\\
		\\
		\textbf{Claim:} $K_{(t,x)}$ is non-empty, convex, compact for all $(t,x)\in [0,T]\times \T^d$, and for every $\varepsilon>0$ there exists some $\delta>0$ such that
		\begin{align}
			d_H\left(K_{(t,x)},K_{(t',x')} \right)\leq \varepsilon \text{ for all }(t,x),(t',x')\in (0,T)\times \T^d\text{ with }|(t,x)-(t',x')|\leq \delta,\label{eq:hdcontinuity}
		\end{align}
		where $d_H$ denotes the Hausdorff-distance.\\
		\\
		Indeed, $K_{(t,x)}\subset \R^m$ is clearly bounded and closed, hence compact. Since $(\rho,m,M,Q)\mapsto \frac{2}{d}e_{\operatorname{kin}}(\rho,m,M)+\rho^{\gamma}-Q$ is convex by Lemma \ref{lem:lemmatwopointtwo}, we infer that $K_{(t,x)}$ is the intersection of convex sets.\\
		By assumption and the definition of the lift $\Theta$ we have for every $(\rho,m,M,Q)\in \supp\left(\tilde{\nu}_{(t,x)}\right)$ that $Q=\frac{|m|^2}{d\rho}+\rho^{\gamma}=\frac{2}{d}\left\langle\nu_{(t,x)},e\right\rangle$ and $\rho\geq \eta$ for a.e.~$(t,x)\in(0,T)\times \T^d$. Continuity of $(t,x)\mapsto \left\langle\nu_{(t,x)},e\right\rangle$ implies that
		\begin{align*}
			\frac{2}{d}\left\langle\nu_{(t,x)},e\right\rangle\geq \rho^{\gamma}\geq \eta^{\gamma}\geq \left(\frac{\eta}{2} \right)^{\gamma}\text{ on }[0,T]\times \T^d.
		\end{align*}
		Thus, for all $(t,x)\in[0,T]\times \T^d$ and for all $\rho\in \left[\frac{\eta}{2},\left(\frac{2}{d}\left\langle\nu_{(t,x)},e\right\rangle \right)^{\frac{1}{\gamma}} \right]$, we can infer that $\left(\rho,0,0,\frac{2}{d}\left\langle\nu_{(t,x)},e\right\rangle \right)\in K_{(t,x)}$. In particular, $K_{(t,x)}$ is nonempty. Hence, the uniform continuity with respect to the Hausdorff-distance follows from Lemma \ref{lem:hddistance} below. This proves the claim.\\
		\\
		In order to improve the sequence $(u_j+w)$ we have to ensure that the sets $(K_{(t,x)}-\sigma(t,x))$ behave sufficiently well in $(t,x)$. Observe that $\supp(\tau_{-\sigma(t,x)}\tilde{\nu}_{(t,x)})\subset K_{(t,x)}-\sigma(t,x)$ and $K_{(t,x)}-\sigma(t,x)$ is again compact and convex for all $(t,x)$. Note also that the map $(t,x)\mapsto K_{(t,x)}-\sigma(t,x)$ is uniformly continuous with respect to the Hausdorff-distance on $(0,T)\times \T^d$. In the proof of the claim above we determined explicit elements of $K_{(t,x)}$. Since the $\rho$-coordinate of these elements satisfies $\rho\in \left[\frac{\eta}{2},\left(\frac{2}{d}\left\langle\nu_{(t,x)},e\right\rangle \right)^{\frac{1}{\gamma}} \right]\supset \left[\frac{\eta}{2},\eta \right]$, we obtain that $|K_{(t,x)}-\sigma(t,x)|_{\infty}\geq \frac{\eta}{4}$ for all $(t,x)\in (0,T)\times \T^d)$.\\
		Combining this with the claim and (\ref{eq:arxivresultconditions}) we can apply Corollaries 2.3, 2.4, and 2.5 from \cite{G20}. This yields a sequence of open sets $(U_j)$ and a sequence of functions $(g_j)\subset W^{2,\infty}((0,T)\times \T^d)$ such that
		\begin{align*}
			U_j\subset \subset (0,T)\times &(0,1)^{d},\\
			\left|\big((0,T)\times \T^d\big)\backslash U_j\right|\rightarrow &0,\\
			g_j=w\text{ on }\big((0,T)\times \T^d \big)\backslash &U_j,\\
			\left\|(t,x)\mapsto\dist\left(\mathcal{B}_Eg_j(t,x),K_{(t,x)}-\sigma(t,x)\right)\right\|_{L^{\infty}((0,T)\times \T^d)}\rightarrow &0,\\
			\mathcal{B}_Eg_j\overset{Y}{\rightharpoonup}&\tau_{-\sigma}\tilde{\nu}.
		\end{align*}		
		We need to mollify this sequence of functions to be able to apply Proposition \ref{theo:EJT} later on. We follow Step 4 of the proof of Theorem 18 in \cite{SW12}. So, let $\psi\colon \R^d\to\R$ be a standard mollifier with support on $B_1(0)$ and let $\chi\colon \R\to\R$ be a shifted standard mollifier such that its support lies in $(-1,0)$. As usual define for $\varepsilon>0$ the functions $\psi_{\varepsilon}(x):=\frac{1}{\eps^d}\psi\left(\frac{x}{\eps} \right)$ and $\chi_{\varepsilon}(t):=\frac{1}{\eps}\chi\left(\frac{t}{\eps} \right)$. Set $\phi_{\eps}(t,x)=\chi_{\eps}(t)\psi_{\eps}(x)$. The mollified functions $\phi_{\eps}\ast g_j$ lie in $C^{\infty}((0,T-\eps)\times \T^d)$. To remedy the shortened domain in the time variable we introduce the scaling $t\mapsto \frac{T-\varepsilon}{T}t$. Set
		\begin{align*}
			h_{\eps,j}(t,x):=\left(\frac{T}{T-\eps}\right)^2(\phi_{\eps}\ast g_j)\left(\frac{T-\eps}{T}t,x \right)\text{ for all }(t,x)\in(0,T)\times \T^d.
		\end{align*}
		These functions are smooth on $(0,T)\times \T^d$ and satisfy $\mathcal{B}_Eh_{\eps,j}(t,x)=\left(\phi_{\eps}\ast\mathcal{B}_Eg_j \right)\left(\frac{T-\eps}{T}t,x \right)$. For all $\varepsilon,j$ It is then straightforward to check that $\mathcal{B}_Eh_{\eps,j}$ is uniformly continuous on $[0,T]\times \T^d$. Moreover, for fixed $j$ we have
		\begin{align*}
			&\left\|\mathcal{B}_Eh_{\eps,j}-\mathcal{B}_Eg_j\right\|_{L^1((0,T)\times\T^d)}\\
			%\leq &\left\|\left(\phi_{\eps}\ast\mathcal{B}_Eg_j \right)\left(\frac{T-\eps}{T}\cdot,\cdot \right)-\mathcal{B}_Eg_j\left(\frac{T-\eps}{T}\cdot,\cdot \right)\right\|_{L^1((0,T)\times\T^d)}+\left\|\mathcal{B}_Eg_j\left(\frac{T-\eps}{T}\cdot,\cdot \right)-\mathcal{B}_Eg_j \right\|_{L^1((0,T)\times\T^d)}\\
			\leq &\frac{T}{T-\eps}\left\|\phi_{\eps}\ast\mathcal{B}_Eg_j-\mathcal{B}_Eg_j \right\|_{L^1((0,T-\eps)\times\T^d)}+\left\|\mathcal{B}_Eg_j\left(\frac{T-\eps}{T}\cdot,\cdot \right)-\mathcal{B}_Eg_j \right\|_{L^1((0,T)\times\T^d)}\\
			\overset{\eps\rightarrow 0}{\rightarrow}&0.
		\end{align*}
		Fix $(t,x)\in(0,T)\times \T^d$ and estimate
		\begin{align*}
			\dist\left(\mathcal{B}_Eh_{\eps,j}(t,x),K_{(t,x)}-\sigma(t,x) \right)%=&\dist\left(\int\limits_{\R^{d+1}}^{}\phi_{\eps}\left(\frac{T-\eps}{T}t-\tau,x-y\right)\mathcal{B}_Eg_j(\tau,y)\dd\tau\dy,K_{(t,x)}-\sigma(t,x) \right)\\
			%\leq &\int\limits_{\R^{d+1}}^{}\phi_{\eps}\left(\frac{T-\eps}{T}t-\tau,x-y\right)\dist\left(\mathcal{B}_Eg_j(\tau,y),K_{(t,x)}-\sigma(t,x) \right)\dd\tau\dy\\
			\leq &\left\|(\tau,y)\mapsto\dist\left(\mathcal{B}_Eg_j(\tau,y),K_{(\tau,y)}-\sigma(\tau,y) \right)\right\|_{L^{\infty}((0,T)\times \T^d)}\\
			&+\int\limits_{\R^{d+1}}^{}\phi_{\eps}\left(\frac{T-\eps}{T}(t-\tau),x-y\right)d_H\left(K_{(t,x)}-\sigma(t,x),K_{(\tau,y)}-\sigma(\tau,y) \right)\dd\tau\dy\\
			\overset{\eps\rightarrow 0}{\rightarrow}&\left\|(\tau,y)\mapsto\dist\left(\mathcal{B}_Eg_j(\tau,y),K_{(\tau,y)}-\sigma(\tau,y) \right)\right\|_{L^{\infty}((0,T)\times \T^d)}.
		\end{align*}
		Here, we used Jensen's inequality, the estimate (\ref{eq:hdcontinuity}), and the fact that $|t-\tau|\leq 2\eps$ for all $\tau$ in the support of $\chi_{\eps}\left(\frac{T-\eps}{T}(t-\cdot) \right)$. Note that this convergence is uniform in $(t,x)$.\\
		Therefore, we can choose a subsequence (not-relabeled) in $j\in \N$ and corresponding $\eps_j$ such that
		\begin{align}
			\left\|\mathcal{B}_Eh_{\eps_j,j}-\mathcal{B}_Eg_j\right\|_{L^1((0,T)\times\T^d)}&\leq \frac{1}{j},\nonumber\\
			\left\|(t,x)\mapsto \dist\left(\mathcal{B}_Eh_{\eps_j,j}(t,x),K_{(t,x)}-\sigma(t,x) \right)\right\|_{L^{\infty}((0,T)\times \T^d)}&\leq \frac{1}{j},\nonumber\\
			\eps_j< \inf\{t\in[0,T]\,:\, \exists\,x\in \T^d\text{ s.t. }&(t,x)\in U_j\}.\label{eq:minimaltimemollification}
		\end{align}
		Define the sequence
		\begin{align*}
			(\rho_j,m_j,M_j,\tilde{Q}_j):=\sigma+\mathcal{B}_Eh_{\eps_j,j}\in C\big([0,T]\times \T^d\big)\cap C^{\infty}\big((0,T)\times \T^d\big).
		\end{align*}
		This sequence satisfies
		\begin{align*}
			\tilde{Q}_j&\geq \frac{2}{d}e_{\operatorname{kin}}(\rho_j,m_j,M_j)+\rho_j^{\gamma}-\frac{1}{j}L-\frac{1}{j},\\
			\left|\tilde{Q}_j-\frac{2}{d}\langle\nu,e\rangle\right|&\leq \frac{1}{j},\\
			\rho_j&\geq \frac{\eta}{2}-\frac{1}{j}
		\end{align*}
		on $(0,T)\times \T^d$, where $L$ is the minimal Lipschitz constant of the convex function $(\rho,m,M)\mapsto\frac{2}{d}e_{\operatorname{kin}}(\rho,m,M)+\rho^{\gamma}$ on $[-(\bar{R}+1),\bar{R}+1]^{N-1}$. This is valid since the values of $\sigma+\mathcal{B}_Eh_{\eps_j,j}$ are contained in $[-(\bar{R}+1),\bar{R}+1]^N$. Moreover, Lemma \ref{lem:initialvalueproperties} yields that
		\begin{align*}
			\|\rho_j(0,\cdot)-\rho_0\|_{L^{\gamma}(\T^d)}%=&\|(\phi_{\eps_j}\ast \mathcal{B}_Ew)(0,\cdot)+\sigma(0,\cdot)-\langle\tilde{\nu},\rho\rangle(0,\cdot)\|_{L^{\gamma}(\T^d)}\\
			%\leq &\|(\phi_{\eps_j}\ast \langle \tilde{\nu},\rho\rangle)(0,\cdot)-\langle\tilde{\nu},\rho\rangle(0,\cdot)\|_{L^{\gamma}(\T^d)}+\|(\phi_{\eps_j}\ast \sigma)(0,\cdot)-\sigma(0,\cdot)\|_{L^{\gamma}(\T^d)}\\
			\leq &\left\|(\phi_{\eps_j}\ast \langle \tilde{\nu},\rho\rangle)(0,\cdot)-\int\limits_{0}^{\eps_j}(\langle\tilde{\nu},\rho\rangle(0,\cdot)\ast\psi_{\eps_j})\chi_{\eps_j}(-s)\ds\right\|_{L^{\gamma}(\T^d)}\\
			&+\|\langle\tilde{\nu},\rho\rangle(0,\cdot)\ast \psi_{\eps_j}-\langle\tilde{\nu},\rho\rangle(0,\cdot)\|_{L^{\gamma}(\T^d)}%+\|(\phi_{\eps_j}\ast \sigma)(0,\cdot)-\sigma(0,\cdot)\|_{L^{\gamma}(\T^d)}
			\\
			%\leq &\left(\int\limits_{0}^{\eps_j}\int\limits_{\T^d}^{}|(\langle\tilde{\nu},\rho\rangle(s,\cdot)-\langle\tilde{\nu},\rho\rangle(0,\cdot))\ast \psi_{\eps_j}|^{\gamma}\dx\chi_{\eps_j}(-s)\ds \right)^{\frac{1}{\gamma}}\\
			% &+\|\langle\tilde{\nu},\rho\rangle(0,\cdot)\ast \psi_{\eps_j}-\langle\tilde{\nu},\rho\rangle(0,\cdot)\|_{L^{\gamma}(\T^d)}+\underset{0\leq s\leq \eps_j}{\sup}\|\sigma(s,\cdot)-\sigma(0,\cdot)\|_{L^{\gamma}(\T^d)}\\
			\leq &\underset{0\leq s\leq \eps_j}{\sup}\|\langle\tilde{\nu},\rho\rangle(s,\cdot)-\langle\tilde{\nu},\rho\rangle(0,\cdot)\|_{L^{\gamma}(\T^d)}\\
			&+\|\langle\tilde{\nu},\rho\rangle(0,\cdot)\ast \psi_{\eps_j}-\langle\tilde{\nu},\rho\rangle(0,\cdot)\|_{L^{\gamma}(\T^d)}%+\underset{0\leq s\leq \eps_j}{\sup}\|\sigma(s,\cdot)-\sigma(0,\cdot)\|_{L^{\gamma}(\T^d)}
			\\
			\rightarrow &0,
		\end{align*}
		where we used the smallness condition on $\varepsilon_j$ from (\ref{eq:minimaltimemollification}). An analogous calculation yields
		\begin{align*}
			\|m_j(0,\cdot)-m_0\|_{L^2(\T^d)}\rightarrow 0.
		\end{align*}
		With a slightly adjusted estimate one can show that $\rho_j(0,\cdot)\rightharpoonup \rho_0$ in $L^{\gamma}(\T^d)$ and $m_j(0,\cdot)\rightharpoonup m_0$ in $L^2(\T^d)$, but for that the strong convergence in $L^{\gamma}(\T^d)$ and $L^2(\T^d)$ is not needed. We give the details only for $\rho_j$. Let $\varphi\in C^{\infty}(\T^d)$. Then
		\begin{align*}
			&\left|\int\limits_{\T^d}^{}\left(\rho_j(0,x)-\rho_0(x) \right)\varphi(x)\dx \right|\\
			%\leq &\left|\int\limits_{\T^d}^{}\left((\phi_{\eps_j}\ast\langle\tilde{\nu},\rho\rangle)(0,x)-\langle\tilde{\nu},\rho\rangle(0,x) \right)\varphi(x)\dx \right|+\left|\int\limits_{\T^d}^{}\left((\phi_{\eps_j}\ast\sigma )(0,x)-\sigma(0,x) \right)\varphi(x)\dx \right|\\
			\leq &\left|\int\limits_{\T^d}^{}\left((\phi_{\eps_j}\ast\langle\tilde{\nu},\rho\rangle )(0,x)-\int\limits_{0}^{\eps_j}(\psi_{\eps_j}\ast\langle\tilde{\nu},\rho\rangle)(0,x)\chi_{\eps_j}(-s)\ds \right)\varphi(x)\dx \right|\\
			&+\left|\int\limits_{\T^d}^{}\left((\psi_{\eps_j}\ast\langle\tilde{\nu},\rho\rangle)(0,x)-\langle\tilde{\nu},\rho\rangle(x) \right)\varphi(x)\dx \right|%+\left|\int\limits_{\T^d}^{}\left((\phi_{\eps_j}\ast\sigma )(0,x)-\sigma(0,x) \right)\varphi(x)\dx \right|
			\\
			%\leq &\left|\int\limits_{0}^{\eps_j}\int\limits_{\T^d}^{}\int\limits_{\T^d}^{}\left(\langle\tilde{\nu},\rho\rangle(s,x)-\langle\tilde{\nu},\rho\rangle(0,x) \right)\varphi(x+y)\psi_{\eps_j}(y)\chi_{\eps_j}(-s)\dx\dy\ds \right|\\
			% &+\left(\|\langle\tilde{\nu},\rho\rangle(0,\cdot)\ast\psi_{\eps_j}-\langle\tilde{\nu},\rho\rangle(0,\cdot)\|_{L^{\gamma}(\T^d)}+\underset{0\leq s\leq \eps_j}{\sup}\|\sigma(s,\cdot)-\sigma(0,\cdot)\|_{L^{\gamma}(\T^d)}\right)\cdot\|\varphi\|_{L^{\gamma'}(\T^d)}\\
			\leq &\underset{0\leq s\leq \eps_j}{\sup}\left|\int\limits_{\T^d}^{}\left(\langle\tilde{\nu},\rho\rangle(s,x)-\langle\tilde{\nu},\rho\rangle(0,x) \right)\varphi(x)\dx \right|+\eps_j\|D\varphi\|_{L^{\infty}(\T^d)}2\|\langle\tilde{\nu},\rho\rangle\|_{L^{\infty}((0,T)\times \T^d)}\\
			&+\|\langle\tilde{\nu},\rho\rangle(0,\cdot)\ast\psi_{\eps_j}-\langle\tilde{\nu},\rho\rangle(0,\cdot)\|_{L^{\gamma}(\T^d)}\cdot\|\varphi\|_{L^{\gamma'}(\T^d)}%+\underset{0\leq s\leq \eps_j}{\sup}\|\sigma(s,\cdot)-\sigma(0,\cdot)\|_{L^{\gamma}(\T^d)}\right)
			\\
			\rightarrow & 0,
		\end{align*}
		where we also used Taylor's theorem. Since $(\rho_j(0,\cdot)-\rho_0)$ is uniformly bounded in $L^{\gamma}(\T^d)$, this yields the assertion. The weak convergence of the initial values is only important if we want to consider not necessarily admissible $\nu$ when proving Theorem \ref{theo:weakmainresult}.\\
		Note that $\sigma$ is $\mathcal{A}$-free, since $\langle\tilde{\nu},\operatorname{id}\rangle$ is $\mathcal{A}$-free. Thus, the sequence of continuous functions
		\begin{align*}
			z_j:=(\rho_j,m_j,M_j,Q_j):=\left(\rho_j,m_j,M_j,\tilde{Q}_j+\frac{1}{j}(L+2)\right)\text{ with }j\geq \frac{4}{\eta}
		\end{align*}
		has the properties
		\begin{align*}
			\mathcal{A}_Ez_j=0,\ \ \ \ z_j\overset{Y}{\rightharpoonup}\tilde{\nu},\ \ \ \ \rho_j\geq \frac{\eta}{4},\ \ \ \ Q_j>\frac{2}{d}e_{\operatorname{kin}}(\rho_j,m_j,M_j)+\rho_j^{\gamma},\ \ \ \ \left|Q_j-\frac{2}{d}\langle\nu,e\rangle\right|\leq\frac{1}{j}(L+3).
		\end{align*}
		It follows from Lemma \ref{lem:lemmatwopointtwo} that $e_{\operatorname{kin}}(\rho_j,m_j,M_j)$ bounds $\frac{|m_j|^2}{2\rho_j}$ and $\frac{d}{2(d-1)}|M_j|$. Hence, the sequence $(z_j)$ is uniformly bounded. The desired convergences of $\rho_j(0,\cdot)$ and $m_j(0,\cdot)$ in $L^{\gamma}$ and $L^2$, respectively, have been proven above. So, taking another subsequence in order to lessen the bounds to be smaller than $\frac{1}{j}$ finishes the proof.
	\end{proof}
	We have used the following auxiliary result about the Hausdorff distance of certain sets related to the set $K$ from the above proof.
	\begin{lem}\label{lem:hddistance}
		Let $\eta,\tilde{R},\bar{R},\sigma,\gamma,N$ be as in the proof of Proposition \ref{theo:stepone}. Consider the convex and compact set
		\begin{align}
			\mathcal{C}:=&\left\{(\rho,m,M,Q) \,:\, \frac{2}{d}e_{\operatorname{kin}}(\rho,m,M)+\rho^{\gamma}\leq Q\right\}\cap \left(\left[\frac{\eta}{2},\infty\right)\times \R^d\times S_0^d\times\R^+\right)\\
			&\cap \left[-\bar{R},\bar{R} \right]^N.
		\end{align}
		Let $f\colon A\to [\eta^{\gamma},\tilde{R}+\|\sigma\|_{L^{\infty}}]$ be a continuous function on a compact set $A\subset \R^{d+1}$.\\
		Then if the sets
		\begin{align}
			K_x:=\mathcal{C}\cap\{(\rho,m,M,Q)\,:\,Q= f(x) \}
		\end{align}
		are nonempty for all $x\in A$, they have the property that for all $\varepsilon>0$ there exists some $\delta>0$ such that
		\begin{align}
			d_H(K_x,K_y)\leq \varepsilon\text{ if }|x-y|\leq \delta.
		\end{align}
		Here, $d_H$ denotes the Hausdorff distance.
	\end{lem}
	\begin{proof}
		First, note that the vectors
		\begin{align}
			z_1:=&\left(\frac{\eta}{2},0,0,\left(\frac{\eta}{2} \right)^\gamma \right),\\
			z_2:=&\left(\frac{\eta}{2},0,0,\bar{R} \right)
		\end{align}
		lie in the convex and compact set $\mathcal{C}$. %Theorem 20 in \cite{W07} states that
		%\begin{align}
		%	d_H(K_x,K_y)=d_H(\partial K_x,\partial K_y)
		%\end{align}
		%for all $x,y\in A$. So, without loss of generality we restrict our discussion on the consideration of elements of the boundary. 
		For all $x\in A$ it holds that $\tilde{R}+\|\sigma\|_{L^{\infty}}\geq f(x)\geq \eta^{\gamma}$, hence for every $(\rho,m,M,Q)\in K_x$ it holds that $\tilde{R}+\|\sigma\|_{L^{\infty}}\geq Q\geq \eta^{\gamma}$. Thus, for any $x\in A$ and any $z\in K_x$, the line through $z_1=(\rho_1,m_1,M_1,Q_1)$ and $z=(\rho,m,M,Q)$ has $Q$-slope at least
		\begin{align}
			\frac{Q-Q_1}{|(\rho_1,m_1,M_1)-(\rho,m,M)|}\geq \frac{\eta^{\gamma}-\left(\frac{\eta}{2} \right)^{\gamma}}{2\sqrt{N}\bar{R} }=:a>0.
		\end{align}
		Similarly, for any $z\in \mathcal{C}_x$, the line through $z=(\rho,m,M,Q)$ and $z_2=(\rho_2,m_2,M_2,Q_2)$ has $Q$-slope at least
		\begin{align}
			\frac{Q_2-Q}{|(\rho,m,M)-(\rho_2,m_2,M_2)|}\geq \frac{\tilde{R}+\|\sigma\|_{L^{\infty}}+\eta^{\gamma}-\left(\frac{\eta}{2} \right)^{\gamma}-\left(\tilde{R}+\|\sigma\|_{L^{\infty}}\right)}{2\sqrt{N}\bar{R}}=a,
		\end{align}
		where we used the definition $\bar{R}=\tilde{R}+\|\sigma\|_{L^{\infty}}+\eta^{\gamma}-\left(\frac{\eta}{2} \right)^{\gamma}$. Now fix $x,y\in A$ and assume without loss of generality that $f(x)<f(y)$. Let $z\in K_y$ and denote by $z_x\in \mathcal{C}_x$ the intersection of the line through $z_1$ and $z$ with the set $K_x$. Such a point $z_x$ exists due to the convexity of $K$. Then
		\begin{align}
			\dist(z,K_x)\leq |z-z_x|\leq \sqrt{|f(x)-f(y)|^2+\frac{1}{a^2}|f(x)-f(y)|^2}=|f(x)-f(y)|\sqrt{1+\frac{1}{a^2}}.
		\end{align}
		Similarly, for $\bar{z}\in K_x$ denote by $\bar{z}_y\in K_x$ the intersection of the line through $z_2$ and $\bar{z}$ with the set $K_y$. Then
		\begin{align}
			\dist(\bar{z},K_y)\leq |\bar{z}-\bar{z}_y|\leq |f(x)-f(y)|\sqrt{1+\frac{1}{a^2}}.
		\end{align}
		Note that $f$ is uniformly continuous as $A$ is compact. So, for every $\eps>0$ there exists $\delta>0$ such that $|f(x)-f(y)|\leq \frac{\eps}{\sqrt{1+\frac{1}{a^2}}}$ for all $x,y\in A$ such that $|x-y|\leq \delta$. Thus, in summary, 
		\begin{align}
			d_H(K_x,K_y)=\max\left\{\underset{z\in K_x}{\max}\dist(z,K_y),\underset{\bar{z}\in K_y}{\max}\dist(\bar{z},K_x) \right\}\leq |f(x)-f(y)|\sqrt{1+\frac{1}{a^2}}\leq \eps
		\end{align}
		for all $x,y\in A$ with $|x-y|\leq \delta$. 
	\end{proof}
	Now we improve the sequence from Proposition \ref{theo:stepone} such that the space integral over the generalized pressure component is maximal at time $t=0$. This will eventually lead to the energy admissibility of the corresponding weak solutions in Proposition \ref{prop:stepthree}, below.
	\begin{prop}\label{prop:steptwo}
		Assume the lift $\tilde{\nu}$ of an admissible measure-valued solution $\nu$ on $(0,T)\times \T^d$ with initial data $(\rho_0,m_0)\in L^{\infty}(\T^d)$ can be generated by a sequence of subsolutions $(\rho_j,m_j,M_j,Q_j)\in C([0,T]\times \T^d)$ with the properties:
		\begin{align*}
			%(\rho_j,m_j,Q_j)&\in C([0,T],(L^{\gamma}\times L^2\times L^1)_w(\T^d)),\\
			\|(\rho_j,m_j,M_j,Q_j)\|_{L^{\infty}}&\leq R,\\
			\rho_j&\geq \eta\text{ on }(0,T)\times \T^d,\\
			\frac{2}{d}e_{\operatorname{kin}}(\rho_j,m_j,M_j)+\rho_j^{\gamma}&< Q_j\text{ on }(0,T)\times 
			\T^d,\\
			\left|Q_j(t,x)-\frac{2}{d}\langle\nu_{(t,x)},e\rangle\right|&\leq \frac{1}{j}\text{ for all }(t,x)\in[0,T]\times \T^d,\\
			(t,x)&\mapsto \langle\nu_{(t,x)},e\rangle\text{ is continuous on }[0,T]\times \T^d,\\
			\rho_j(0,\cdot)&\rightarrow \rho_0\text{ in }L^{\gamma}(\T^d),\\
			m_j(0,\cdot)&\rightarrow m_0\text{ in }L^{2}(\T^d)
		\end{align*}
		for some $R,\eta>0$.\\
		Then $\tilde{\nu}$ is generated by a sequence of uniformly bounded subsolutions $(\tilde{\rho}_j,\tilde{m}_j,\tilde{M}_j,\tilde{Q}_j)\in C((0,T)\times \T^d)$ with the property that:
		\begin{align*}
			\tilde{\rho}_j,\tilde{Q}_j&\in C([0,T]\times \T^d),\\
			\tilde{m}_j&\in C([0,T],L^2_{\operatorname{w}}(\T^d)),\\
			\tilde{\rho}_j&\geq \eta\text{ on }(0,T)\times \T^d,\\
			\frac{2}{d}e_{\operatorname{kin}}(\tilde{\rho}_j,\tilde{m}_j,\tilde{M}_j)+\tilde{\rho}_j^{\gamma}&< \tilde{Q}_j\text{ on } (0,T)\times 
			\T^d,\\
			e(\tilde{\rho}_j(0,x),\tilde{m}_j(0,x))&=\frac{d}{2}\tilde{Q}_j(0,x)\text{ for a.e.~}x\in\T^d,\\
			\int\limits_{\T^d}^{}\tilde{Q}_j(t,x)\dx&\leq \int\limits_{\T^d}^{}\tilde{Q}_j(0,x)\dx\text{ for all }t\in[0,T],\\
			\tilde{\rho}_j(0,\cdot)&\rightarrow \rho_0\text{ in }L^{\gamma}(\T^d),\\
			\tilde{m}_j(0,\cdot)&\rightarrow m_0\text{ in }L^{2}(\T^d).
		\end{align*}
	\end{prop}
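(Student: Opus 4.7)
The strategy is to use the time-zero perturbation property of convex integration for subsolutions of the relaxed Euler system (Proposition \ref{timezeroperturbation}) to modify each $(\rho_j, m_j, M_j, Q_j)$ on a vanishing time interval $[0, \tau_j]$ so that at $t = 0$ the new subsolution takes values in the ``Euler set'' $\{M = m\ocircle m/\rho,\ Q = |m|^2/(d\rho) + \rho^\gamma\}$ pointwise, in which the subsolution inequality becomes the equality $e(\rho, m) = \tfrac{d}{2}Q$, while remaining unchanged on $[\tau_j, T]\times \T^d$. A careful choice of target initial data with slightly elevated energy will absorb the $O(1/j)$-slack between $Q_j$ and $\tfrac{2}{d}\langle\nu, e\rangle$, yielding the desired energy inequality.

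\textbf{Step 1 (Target initial data).} First I would choose, for each $j$, $(\bar\rho_j, \bar m_j) \in L^\infty(\T^d)$ with $\bar\rho_j \geq \eta$, $\bar\rho_j \to \rho_0$ in $L^\gamma(\T^d)$, $\bar m_j \to m_0$ in $L^2(\T^d)$, and
\[
\tfrac{2}{d}\int_{\T^d} e(\bar\rho_j, \bar m_j)\,\dx \;>\; \sup_{t \in [0,T]} \int_{\T^d} Q_j(t, x)\,\dx.
\]
This is feasible because admissibility of $\nu$ combined with $|Q_j - \tfrac{2}{d}\langle\nu, e\rangle| \leq 1/j$ forces $\sup_t \int Q_j(t, \cdot)\,\dx \leq \tfrac{2}{d}\int e(\rho_0, m_0)\,\dx + |\T^d|/j$, while the given strong convergence of initial data together with the lower bound $\rho_j(0) \geq \eta$ imply $\int e(\rho_j(0), m_j(0))\,\dx \to \int e(\rho_0, m_0)\,\dx$. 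A multiplicative perturbation $\bar m_j := (1 + \varepsilon_j)\, m_j(0)$ with $\varepsilon_j \downarrow 0$ chosen to produce an extra kinetic energy of order $1/j$ closes the gap when $m_0 \not\equiv 0$; when $m_0 \equiv 0$ one instead shifts $\bar m_j$ by a small constant vector. Then set $\bar M_j := \bar m_j \ocircle \bar m_j/\bar\rho_j$ and $\bar Q_j := |\bar m_j|^2/(d\bar\rho_j) + \bar\rho_j^\gamma$, so that $(\bar\rho_j, \bar m_j, \bar M_j, \bar Q_j)$ is an Euler state satisfying $e(\bar\rho_j, \bar m_j) = \tfrac{d}{2}\bar Q_j$ a.e.

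\textbf{Step 2 (Perturbation and verification).} Applying Proposition \ref{timezeroperturbation} to $(\rho_j, m_j, M_j, Q_j)$ with target initial data $(\bar\rho_j, \bar m_j, \bar M_j, \bar Q_j)$ would produce a new subsolution $(\tilde\rho_j, \tilde m_j, \tilde M_j, \tilde Q_j)$ that (i) equals the Euler-state target at $t = 0$, (ii) coincides with $(\rho_j, m_j, M_j, Q_j)$ on $[\tau_j, T]\times \T^d$ for some $\tau_j \downarrow 0$, (iii) still satisfies the strict subsolution inequality and the density bound $\tilde\rho_j \geq \eta$, and (iv) has the regularity $\tilde\rho_j, \tilde Q_j \in C([0, T]\times \T^d)$ and $\tilde m_j \in C([0, T], L^2_{\operatorname{w}}(\T^d))$. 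Since the perturbation is confined to $[0, \tau_j]$ with $\tau_j \to 0$ and the sequence stays uniformly bounded, the Young measure convergence $(\tilde\rho_j, \tilde m_j, \tilde M_j, \tilde Q_j) \overset{Y}{\rightharpoonup} \tilde\nu$ is preserved: any test integral differs from that of the original sequence only on a set of measure $O(\tau_j) \to 0$. For the energy inequality, $t \geq \tau_j$ gives $\int \tilde Q_j(t,\cdot)\,\dx = \int Q_j(t,\cdot)\,\dx < \int \bar Q_j\,\dx = \int \tilde Q_j(0,\cdot)\,\dx$ by the choice made in Step 1, while for $t \in (0, \tau_j]$ the monotonicity has to come from Proposition \ref{timezeroperturbation} itself (note that simply post-adding a non-negative time function to $\tilde Q_j$ is incompatible with the equality $e = \tfrac{d}{2}\tilde Q_j(0)$).

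\textbf{Main obstacle.} The delicate point is the invocation of Proposition \ref{timezeroperturbation}: compatibility of the Euler-state target $(\bar\rho_j, \bar m_j, \bar M_j, \bar Q_j)$ with the values of the subsolution near $t = 0$ must be arranged so that the perturbation preserves the strict subsolution inequality, the lower density bound, and the monotonicity of $t \mapsto \int \tilde Q_j(t,\cdot)\,\dx$ on the short interval $[0, \tau_j]$. The hypotheses from Theorem \ref{theo:mainresult} that $\supp(\tilde\nu_{(t,x)}) \subset \{Q = \tfrac{2}{d}\langle\nu_{(t,x)}, e\rangle\}$ and that $(t,x) \mapsto \langle\nu_{(t,x)}, e\rangle$ is continuous are essential here, since they force the values of $Q_j(t,\cdot)$ for small $t$ to be uniformly close to those of an Euler-state pressure, making the perturbation small enough to be carried out on an arbitrarily short time interval without spoiling the structural constraints.
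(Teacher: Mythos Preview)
Your plan misreads what Proposition~\ref{timezeroperturbation} (and its iterate, Proposition~\ref{prop:proposition22}) actually delivers. In that proposition the space $X_0$ is defined with \emph{fixed} $\rho$ and $Q$; the perturbation acts only on the pair $(m,M)$ and drives the functional $I(\bar m)=\int_{\T^d}\bigl(\tfrac{|\bar m(0)|^2}{d\rho(0)}+\rho(0)^\gamma-Q(0)\bigr)\dx$ up to zero. You cannot prescribe a ``target'' initial state $(\bar\rho_j,\bar m_j,\bar M_j,\bar Q_j)$: in particular $\tilde Q_j(0,\cdot)$ will still equal $Q_j(0,\cdot)$, not your $\bar Q_j$. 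Consequently your mechanism for securing $\int\tilde Q_j(t,\cdot)\,\dx\le\int\tilde Q_j(0,\cdot)\,\dx$ collapses, and indeed you flag this yourself in the ``main obstacle'' paragraph without resolving it.

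The paper closes this gap by a different, and simpler, device: before invoking the perturbation it replaces $Q_j$ by
\[
\tilde Q_j(t,x):=Q_j(t,x)+\frac{1}{j}+\int_{\T^d}\Bigl(\tfrac{2}{d}\langle\nu,e\rangle-Q_j\Bigr)(t,y)\,\dy,
\]
a modification depending on $t$ only, hence still a subsolution (it disappears under $\nabla_x$) and still generating $\tilde\nu$ since $\|\tilde Q_j-Q_j\|_{L^\infty}\le 2/j$. The point is that now $\int_{\T^d}\tilde Q_j(t,\cdot)\,\dx=\tfrac{1}{j}+\tfrac{2}{d}\int_{\T^d}\langle\nu_{(t,\cdot)},e\rangle\,\dx$, which is $\le\tfrac{1}{j}+\tfrac{2}{d}\int e(\rho_0,m_0)\,\dx$ for all $t$ by admissibility of $\nu$, with equality at $t=0$ by Lemma~\ref{lem:initialvalueproperties}. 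Only \emph{after} this does one apply Proposition~\ref{prop:proposition22} (keeping $\rho_j,\tilde Q_j$ fixed) to saturate $\tfrac{|m|^2}{d\rho}+\rho^\gamma=\tilde Q_j$ at $t=0$; the energy inequality is then automatic because $\tilde Q_j$ was already arranged to have the right integral monotonicity.

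A second gap concerns Young measure generation. Your argument that ``the perturbation is confined to $[0,\tau_j]$ with $\tau_j\to0$'' is not what Proposition~\ref{prop:proposition22} gives: the output $(m_j^n,M_j^n)$ converges to $(m_j,M_j)$ only in $C([0,T],L^2_{\mathrm w})$, not by agreeing outside a small time slab. Showing that a diagonal sequence $(\rho_j,m_j^{n(j)},M_j^{n(j)},\tilde Q_j)$ still generates $\tilde\nu$ requires proving $m_j^{n(j)}-m_j\to0$ in $L^2$ and $M_j^{n(j)}-M_j\to0$ in $L^1$, which the paper obtains through nontrivial estimates exploiting $\tilde Q_j-\rho_j^\gamma-\tfrac{2}{d}e_{\mathrm{kin}}(\rho_j,m_j,M_j)\to0$ and $\tfrac{m_j\ocircle m_j}{\rho_j}-M_j\to0$ in $L^1$ (both consequences of the support structure of $\tilde\nu$). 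This is where the hypotheses $\supp\tilde\nu\subset\{Q=\tfrac{2}{d}\langle\nu,e\rangle\}$ and continuity of $\langle\nu,e\rangle$ are actually used, not in a compatibility check for a target datum.
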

	\begin{proof}
		Define the sequence
		\begin{align}
			\tilde{Q}_j(t,x):=Q_j(t,x)+\frac{1}{j}+\left(\int\limits_{\T^d}^{}\frac{2}{d}\langle\nu,e\rangle-Q_j\dx\right)(t)\text{ for all }(t,x)\in[0,T]\times \T^d.\label{eq:qcomponenttildesequence}
		\end{align}
		Then $(\rho_j,m_j,M_j,\tilde{Q}_j)\in C([0,T]\times \T^d)$ is again a subsolution and generates $\tilde{\nu}$ when $j\rightarrow \infty$ since $\|Q_j-\tilde{Q}_j\|_{L^{\infty}((0,T)\times \T^d)}\leq \frac{2}{j}$. Moreover, on $(0,T)\times \T^d$ it holds that
		\begin{align*}
			\tilde{Q}_j\geq Q_j>\frac{2}{d}e_{\operatorname{kin}}(\rho_j,m_j,M_j)+\rho_j^{\gamma}.
		\end{align*}
		Thus, we can apply Proposition \ref{prop:proposition22} below to the sequence $(\rho_j,m_j,M_j,\tilde{Q}_j)$ and obtain a doubly indexed sequence of subsolutions $(\rho_j,m_j^n,M_j^n,\tilde{Q}_j)$ with $(m_j^n,M_j^n)\in C((0,T)\times \T^d)$ and $m_j^n\in C([0,T],L^2_{\operatorname{w}}(\T^d))$ satisfying
		\begin{align*}
			\frac{2}{d}e_{\operatorname{kin}}(\rho_j,m_j^n,M_j^n)+\rho_j^{\gamma}&<\tilde{Q}_j\text{ on }(0,T)\times \T^d,\\
			\frac{\left|m_j^n(0,x)\right|^2}{d\rho_j(0,x)}+\rho_j(0,x)^{\gamma}&=\tilde{Q}_j(0,x)\text{ for a.e.~}x\in\T^d.
		\end{align*}
		As $m_j^n(t)\overset{n\rightarrow\infty}{\rightharpoonup}m_j(t)$ in $L^2(\T^d)$ for all $t\in [0,T]$, we get for every fixed $j$ using Fatou's lemma that
		\begin{align*}
			\int\limits_{(0,T)}^{}\int\limits_{\T^d}^{}|m_j|^2\dx\dt\leq \underset{n\rightarrow\infty}{\liminf}\int\limits_{(0,T)}^{}\int\limits_{\T^d}^{}|m_j^n|^2\dx\dt.
		\end{align*}
		Hence, we may choose a subsequence $n(j)$ such that
		\begin{align*}
			\int\limits_{(0,T)}^{}\int\limits_{\T^d}^{}|m_j|^2\dx\dt\leq \int\limits_{(0,T)}^{}\int\limits_{\T^d}^{}\left|m_j^{n(j)}\right|^2\dx\dt+\frac{1}{j}.
		\end{align*}
		%This yields
		%\begin{align*}
		%	dR\int\limits_{(0,T)}^{}\int\limits_{\T^d}^{}\frac{\left|m_j^{n(j)}\right|^2}{d\rho_j}-\frac{|m_j|^2}{d\rho_j}\dx\dt\geq \int\limits_{(0,T)}^{}\int\limits_{\T^d}^{}d\rho_j\left(\frac{\left|m_j^{n(j)}\right|^2}{d\rho_j}-\frac{|m_j|^2}{d\rho_j}\right)\dx\dt \geq \frac{1}{j}.
		%\end{align*}
		Since $\tilde{Q}_j-\rho_j^{\gamma}\geq \frac{2}{d}e_{\operatorname{kin}}\left(\rho_j,m_j^{n(j)},M_j^{n(j)}\right)\geq \frac{\left|m_j^{n(j)}\right|^2}{d\rho_j}$ on $(0,T)\times\T^d$, we therefore obtain
		\begin{align*}
			&\left|\int\limits_{(0,T)}^{}\int\limits_{\T^d}^{} \left|m_j^{n(j)}\right|^2-|m_j|^2\dx\dt\right|\\
			\leq& \int\limits_{(0,T)}^{}\int\limits_{\T^d}^{}\left|m_j^{n(j)}\right|^2+\frac{1}{j}-|m_j|^2\dx\dt+\frac{1}{j}\\
			%\leq& dR\left|\int\limits_{(0,T)}^{}\int\limits_{\T^d}^{}(\tilde{Q}_j-\rho_j^{\gamma})-\frac{|m_j|^2}{d\rho_j}\dx\dt\right|+\frac{1}{j}\\
			\leq &\, d\int\limits_{(0,T)}^{}\int\limits_{\T^d}^{}(\tilde{Q}_j-\rho_j^{\gamma})\rho_j-\frac{2}{d}e_{\operatorname{kin}}(\rho_j,m_j,M_j)\rho_j\,\dx\dt\\
			& +d\int\limits_{(0,T)}^{}\int\limits_{\T^d}^{}\frac{2}{d}e_{\operatorname{kin}}(\rho_j,m_j,M_j)\rho_j-\rho_j\frac{|m_j|^2}{d\rho_j}\dx\dt+\frac{2}{j}.
		\end{align*}
		For treating the first summand observe that $\rho_j(\tilde{Q}_j-\rho_j^{\gamma}-\frac{2}{d}e_{\operatorname{kin}}(\rho_j,m_j,M_j))$ is uniformly bounded and $\e_{\operatorname{kin}}$ is continuous. So, the Fundamental Theorem for Young Measures, cf.~for example \cite{FM99} Theorem 2.2, implies that
		\begin{align*}
			\int\limits_{(0,T)}^{}\int\limits_{\T^d}^{}\rho_j\left(\tilde{Q}_j-\rho_j^{\gamma}-\frac{2}{d}e_{\operatorname{kin}}(\rho_j,m_j,M_j)\right) \dx\dt\rightarrow \int\limits_{(0,T)}^{}\int\limits_{\T^d}^{}\int\limits_{\R^m}^{}\rho\left(Q-\left(\frac{2}{d}e_{\operatorname{kin}}(\rho,m,M)+\rho^{\gamma}\right)\dd\tilde{\nu}_{(t,x)}\right)\dx\dt=0.
		\end{align*}
		The last equality follows from Lemma \ref{lem:lemmatwopointtwo} and the support properties of $\tilde{\nu}$. For the second summand, we estimate as in the proof of Theorem 16 in \cite{SW12}:
		\begin{align*}
			\left|\int\limits_{(0,T)}^{}\int\limits_{\T^d}^{}\rho_j\left(\frac{2}{d}e_{\operatorname{kin}}(\rho_j,m_j,M_j)-\frac{|m_j|^2}{d\rho_j}\right)\dx\dt\right|% &=\left|\int\limits_{(0,T)}^{}\int\limits_{\T^d}^{}\lambda_{\operatorname{max}}\left(\frac{m_j\otimes m_j}{\rho_j}-M_j\right)-\frac{|m_j|^2}{d\rho_j}\dx\dt \right|\\
			&=\int\limits_{(0,T)}^{}\int\limits_{\T^d}^{}\rho_j\lambda_{\operatorname{max}}\left(\frac{m_j\ocircle m_j}{\rho_j}-M_j\right)\dx\dt\\
			&\leq R\cdot C\int\limits_{(0,T)}^{}\int\limits_{\T^d}^{}\left| \frac{m_j\ocircle m_j}{\rho_j}-M_j\right|\dx\dt\rightarrow 0.
		\end{align*}
		The convergence to zero on the right hand side is derived by using the Fundamental Theorem for Young Measures.\\
		Now choose another subsequence $n(j)$ (not relabeled) if necessary satisfying
		\begin{align*}
			\underset{t\in(0,T)}{\sup}\int\limits_{\T^d}^{}m_j\cdot\left(m_j^{n(j)}-m_j\right)\dx\leq \frac{1}{j}.
		\end{align*}
		So, we get
		\begin{align*}
			\int\limits_{(0,T)}^{}\int\limits_{\T^d}^{}\left|m_j^{n(j)}-m_j\right|^2\dx\dt&=\int\limits_{(0,T)}^{}\int\limits_{\T^d}^{}\left|m_j^{n(j)}\right|^2-|m_j|^2\dx\dt-2\int\limits_{(0,T)}^{}\int\limits_{\T^d}^{}m_j\cdot\left(m_j^{n(j)}-m_j\right)\dx\dt\\
			&\leq \left\|m_j^{n(j)}\right\|_{L^2}- \|m_j\|_{L^2}+\frac{2}{j},
		\end{align*}
		which tends to zero for $j\rightarrow\infty$ as we have already seen. Considering now the $M$-components, we estimate
		\begin{align*}
			\int\limits_{(0,T)}^{}\int\limits_{\T^d}^{}\left|M_j^{n(j)}-M_j\right|\dx\dt&\leq \int\limits_{(0,T)}^{}\int\limits_{\T^d}^{}\left|M_j^{n(j)}-\frac{m_j^{n(j)}\ocircle m_j^{n(j)}}{\rho_j}\right|\dx\dt+\int\limits_{(0,T)}^{}\int\limits_{\T^d}^{}\left|M_j-\frac{m_j^{n(j)}\ocircle m_j^{n(j)}}{\rho_j}\right|\dx\dt
		\end{align*}
		For the first summand observe that
		\begin{align*}
			\int\limits_{(0,T)}^{}\int\limits_{\T^d}^{}\left|M_j^{n(j)}-\frac{m_j^{n(j)}\ocircle m_j^{n(j)}}{\rho_j}\right|\dx\dt&\leq d\cdot C\int\limits_{(0,T)}^{}\int\limits_{\T^d}^{}\lambda_{\operatorname{max}}\left(\frac{m_j^{n(j)}\ocircle m_j^{n(j)}}{\rho_j}-M_j^{n(j)} \right)\dx\dt\\
			% &\leq \int\limits_{(0,T)}^{}\int\limits_{\T^d}^{}\frac{2}{d}e_{\operatorname{kin}}\left(\rho_j,m_j^{n(j)},M_j^{n(j)} \right)-\frac{\left|m_j^{n(j)}\right|^2}{d\rho_j}\dx\dt\\
			&\leq d\cdot C\left|\int\limits_{(0,T)}^{}\int\limits_{\T^d}^{}\tilde{Q}_j-\rho_j^{\gamma}-\frac{|m_j^{n(j)}|^2}{d\rho_j}\dx\dt\right|\\
			&\leq \frac{C}{\eta}\int\limits_{(0,T)}^{}\int\limits_{\T^d}^{}d\rho_j(\tilde{Q}_j-\rho_j^{\gamma})-|m_j|^2\dx\dt+\frac{C}{\eta}\left|\left\|m_j^{n(j)}\right\|^2_{L^2}-\|m_j\|^2_{L^2}\right|\\
			&\rightarrow 0.
		\end{align*}
		For the second summand we estimate
		\begin{align*}
			&\int\limits_{(0,T)}^{}\int\limits_{\T^d}^{}\left|M_j-\frac{m_j^{n(j)}\ocircle m_j^{n(j)}}{\rho_j}\right|\dx\dt\\%\leq& \int\limits_{(0,T)}^{}\int\limits_{\T^d}^{}\left|M_j-\frac{m_j\circ m_j}{\rho_j}\right|\dx\dt+\eta\int\limits_{(0,T)}^{}\int\limits_{\T^d}^{}\left|m_j^{n(j)}\circ m_j^{n(j)}-m_j\circ m_j\right|\dx\dt\\
			%\leq& \int\limits_{(0,T)}^{}\int\limits_{\T^d}^{}\left|M_j-\frac{m_j\circ m_j}{\rho_j}\right|\dx\dt\\
			% &+\eta C\int\limits_{(0,T)}^{}\int\limits_{\T^d}^{}\left|m_j^{n(j)}\right|\cdot\left|m_j^{n(j)}-m_j\right|+\left|m_j-m_j^{n(j)}\right|\cdot\left|m_j\right|\dx\dt\\
			\leq & \int\limits_{(0,T)}^{}\int\limits_{\T^d}^{}\left|M_j-\frac{m_j\ocircle m_j}{\rho_j}\right|\dx\dt+\frac{C}{\eta}\left\|m_j\right\|_{L^2}\cdot\left\|m_j^{n(j)}-m_j\right\|_{L^2}+\frac{C}{\eta}\left\|m_j^{n(j)}\right\|_{L^2}\cdot\left\|m_j^{n(j)}-m_j\right\|_{L^2}\\
			\rightarrow & 0.
		\end{align*}
		In summary we have shown that $m_j^{n(j)}-m_j\rightarrow 0$ in $L^2((0,T)\times \T^d)$ and $M_j^{n(j)}-M_j\rightarrow 0$ in $L^1((0,T)\times \T^d)$. So, the sequence $\left(\rho_j,m_j^{n(j)},M_j^{n(j)},\tilde{Q}_j\right)$ generates the Young measure $\tilde{\nu}$.\\
		Note that
		\begin{align*}
			\int\limits_{\T^d}^{}\tilde{Q}_j(t,x)\dx\leq \frac{1}{j}+\frac{2}{d}\int\limits_{\T^d}^{}e(\rho_0(x),m_0(x))\dx=\frac{1}{j}+\frac{2}{d}\int\limits_{\T^d}^{}\langle\nu_{(0,x)},e\rangle\dx=\int\limits_{\T^d}^{}\tilde{Q}_j(0,x)\dx,
		\end{align*}
		where we used Lemma \ref{lem:initialvalueproperties}.\\
		Proposition \ref{prop:proposition22} ensures that $m_j^{n(j)}(t=0)-m_j(t=0)\rightharpoonup 0$ in $L^2(\T^d)$. On the other hand, we can estimate for a.e.~$x\in\T^d$
		\begin{align*}
			\frac{\left|m_j^{n(j)}(0,x)\right|^2}{d\rho_j(0,x)}+\rho_j^{\gamma}(0,x)&=\tilde{Q}_j(0,x)\geq Q_j(0,x)\\
			&\geq \frac{2}{d}e_{\operatorname{kin}}(\rho_j(0,x),m_j(0,x),M_j(0,x))+\rho_j^{\gamma}(0,x)\\
			&\geq \frac{|m_j(0,x)|^2}{d\rho_j(0,x)}+\rho_j^{\gamma}(0,x)
		\end{align*}
		by using the continuity of $\rho_j,m_j,M_j,Q_j,\tilde{Q}_j$. Hence, it holds that $\left|m_j^{n(j)}(0,\cdot) \right|\geq |m_j(0,\cdot)|$ a.e.~on $\T^d$. Therefore, we obtain
		\begin{align*}
			&\left|\int\limits_{\T^d}^{}\left|m_j^{n(j)}(0) \right|^2\dx-\int\limits_{\T^d}^{}\left|m_j(0) \right|^2\dx \right|\\
			%\leq &2R\left|\int\limits_{\T^d}^{}\frac{d}{2}\tilde{Q}_j(0)-\rho_j(0)^{\gamma}\dx-\int\limits_{\T^d}^{}\frac{|m_j(0)|^2}{2\rho_j(0)}\dx \right|\\
			\leq &dR\left|\int\limits_{\T^d}^{}e(\rho_0,m_0)-\rho_j(0)^{\gamma}\dx-\int\limits_{\T^d}^{}\frac{|m_j(0)|^2}{d\rho_j(0)}\dx \right|+\frac{dR}{j}\\
			%\leq &2R\left(\frac{1}{j}+\left|\|\rho_j(0)\|_{L^{\gamma}(\T^d)}^{\gamma}-\|\rho_0\|_{L^{\gamma}(\T^d)}^{\gamma} \right|+\left|\int\limits_{\T^d}^{}\frac{|m_j(0)|^2-|m_0|^2}{2\rho_j(0)}\dx \right|+\left|\int\limits_{\T^d}^{}\frac{|m_0|^2}{2}\left(\frac{1}{\rho_j(0)}-\frac{1}{\rho_0} \right)\dx \right|\right)\\
			\leq &dR\left|\|\rho_j(0,\cdot)\|_{L^{\gamma}(\T^d)}^{\gamma}-\|\rho_0\|_{L^{\gamma}(\T^d)}^{\gamma} \right|+\frac{2R^2}{\eta}\|m_j(0,\cdot)-m_0\|_{L^2(\T^d)}\\
			&+\frac{R}{\eta^2}\|m_0\|_{L^{\frac{2\gamma}{\gamma-1}}(\T^d)}^2\|\rho_j(0,\cdot)-\rho_0\|_{L^{\gamma}(\T^d)}+\frac{dR}{j},
		\end{align*}
		which tends to zero as $j\rightarrow\infty$, since $\|m_0\|_{L^{\frac{2\gamma}{\gamma-1}}(\T^d)}\leq R$. So, we obtain that $m_j^{n(j)}(t=0)\rightarrow m_0$ in $L^2(\T^d)$.\\
		Now define
		\begin{align*}
			(\tilde{\rho}_j,\tilde{m}_j,\tilde{M}_j,\tilde{Q}_j):=\left(\rho_j,m_j^{n(j)},M_j^{n(j)},\tilde{Q}_j\right).
		\end{align*}
		Note that $\tilde{Q}_j\leq Q_j+\frac{2}{j}\leq R+2$ and $e_{\operatorname{kin}}\left(\rho_j,m_j^{n(j)},M_j^{n(j)} \right)\leq \frac{d}{2}\tilde{Q}_j$. This implies the uniform boundedness of the sequence $(\tilde{\rho}_j,\tilde{m}_j,\tilde{M}_j,\tilde{Q}_j)$ by Lemma \ref{lem:lemmatwopointtwo}, which finishes the proof.
	\end{proof}
	\begin{rem}
		Note that the arguably quite restrictive condition in Theorem \ref{theo:mainresult} on the support of $\tilde{\nu}$ being concentrated in the $Q$-component becomes important exactly when we prove the above Proposition \ref{prop:steptwo}. This condition ensures the desired properties of the $Q$-component of the updated sequence defined in (\ref{eq:qcomponenttildesequence}).
	\end{rem}
	In the above proof we heavily made use of the following proposition.
	\begin{prop}\label{prop:proposition22}
		Let $(\rho,m,M,Q)\in C([0,T]\times \T^d)$ be a subsolution such that
		\begin{align*}
			\frac{2}{d}e_{\operatorname{kin}}(\rho,m,M)+\rho^{\gamma}&<Q\text{ on }(0,T)\times \T^d,\\
			\rho\geq \eta\text{ on }[0,T]\times \T^d,
		\end{align*}
		for some $\eta>0$ fixed.\\
		Then there exists a sequence $(m^n,M^n)\in C((0,T)\times \T^d)$ such that $(\rho,m^n,M^n,Q)$ is a subsolution and
		\begin{align*}
			m^n&\rightarrow m\text{ in }C([0,T],L^2_{\operatorname{w}}(\T^d)),\\
			\frac{2}{d}e_{\operatorname{kin}}(\rho,m^n,M^n)+\rho^{\gamma}&<Q\text{ on }(0,T)\times \T^d,\\
			\frac{|m^n(0,x)|^2}{d\rho(0,x)}+\rho(0,x)^{\gamma}&=Q(0,x)\text{ for a.e.~}x\in\T^d
		\end{align*}
		for all $n\in \N$.
	\end{prop}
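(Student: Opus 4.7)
Since $\rho$ and $Q$ are to be preserved and the relaxed Euler system is linear in $(m,M)$, the perturbation $(\delta m_n, \delta M_n) := (m^n - m, M^n - M)$ must satisfy the incompressible-type linear constraints
\begin{align*}
\operatorname{div}(\delta m_n) &= 0,\\
\partial_t(\delta m_n) + \operatorname{div}(\delta M_n) &= 0,
\end{align*}
with $\delta M_n$ trace-free symmetric. The plan is to realize $(m^n,M^n)$ as such a perturbation supported in a shrinking time-slab $(0,\tau_n)$ with $\tau_n\downarrow 0$, so that for $t\in[\tau_n,T]$ we keep $(m^n,M^n)=(m,M)$.

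First I would identify the required trace at $t=0$. Set $\varphi(x):=Q(0,x)-\rho(0,x)^{\gamma}-|m(0,x)|^2/(d\rho(0,x))$. Continuity of $(\rho,m,M,Q)$ on $[0,T]\times\T^d$, the strict inequality on $(0,T)\times\T^d$, and the bound $|m|^2/(2\rho)\leq e_{\operatorname{kin}}(\rho,m,M)$ from Lemma \ref{lem:lemmatwopointtwo} give $\varphi\in C(\T^d)$ with $\varphi\geq 0$. The target equality $\frac{|m^n(0,x)|^2}{d\rho(0,x)}+\rho(0,x)^{\gamma}=Q(0,x)$ rewrites as $|m^n(0,x)|^2=|m(0,x)|^2+d\rho(0,x)\varphi(x)$ a.e., which is achieved if $m^n(0,x)=m(0,x)+w_n(x)$ with $w_n(x)\perp m(0,x)$ and $|w_n(x)|^2=d\rho(0,x)\varphi(x)$ pointwise.

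Using $d\geq 2$, I would construct a divergence-free, zero-mean, oscillatory field $w_n\in L^\infty(\T^d;\R^d)$ of the form $w_n(x)=\sqrt{d\rho(0,x)\varphi(x)}\,\xi_n(x)$, where $\xi_n(x)$ is a measurable unit vector field perpendicular to $m(0,x)$ whose direction oscillates at frequency $k_n\to\infty$ in a direction chosen compatible with divergence-freeness (so that $w_n\rightharpoonup 0$ in $L^2(\T^d)$). Extending in time, pick a smooth cutoff $\chi_n\in C^\infty([0,T])$ with $\chi_n(0)=1$, $\chi_n\equiv 0$ on $[\tau_n,T]$, and set $\delta m_n(t,x):=\chi_n(t)w_n(x)$. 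Then $\operatorname{div}(\delta m_n)=0$ automatically, and the equation $\operatorname{div}(\delta M_n(t,\cdot))=-\chi_n'(t)w_n$ is solvable for a trace-free symmetric $\delta M_n$ by an explicit Fourier multiplier acting on the divergence-free field $w_n$; the high frequency of $w_n$ yields an elliptic gain and hence $\|\delta M_n\|_{L^\infty}\to 0$ after a diagonal choice of $(k_n,\tau_n)$.

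Setting $(m^n,M^n):=(m+\delta m_n, M+\delta M_n)$, continuity on $(0,T)\times \T^d$ is inherited from $(m,M)$ and the smoothness of the cutoffs, $(\rho,m^n,M^n,Q)$ is a subsolution, the pointwise saturation at $t=0$ holds by construction, strict inequality on $(0,T)\times\T^d$ persists by continuity together with $\|\delta M_n\|_{L^\infty}\to 0$ and the fact that we are in the open constraint set for $t>0$, and $m^n\to m$ in $C([0,T],L^2_{\operatorname{w}}(\T^d))$ because $\langle\delta m_n(t),\varphi\rangle=\chi_n(t)\langle w_n,\varphi\rangle\to 0$ uniformly in $t\in[0,T]$ for every $\varphi\in L^2(\T^d)$. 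The principal technical obstacle is the construction of $w_n$ in Step 2: marrying the \emph{pointwise} amplitude constraint $|w_n|^2=d\rho(0)\varphi$ with divergence-freeness and weak convergence to zero. In $d\geq 3$ one can exploit the extra dimensional freedom, but in $d=2$ the direction $\xi_n$ is forced to $\pm m(0,\cdot)^\perp/|m(0,\cdot)|$ and divergence-freeness becomes a genuine (eikonal-type) restriction that must be handled by a careful patching of localized plane-wave perturbations tailored to the local direction of $m(0,\cdot)^\perp$.
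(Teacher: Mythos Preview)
Your approach has a genuine gap that is not merely technical. You want to perturb $m$ by an $O(1)$ oscillatory field $\delta m_n=\chi_n(t)w_n(x)$ near $t=0$ while perturbing $M$ only by a vanishing correction $\delta M_n$ coming from an inverse-divergence with elliptic gain. But the constraint you must preserve on $(0,T)\times\T^d$ is not $\frac{|m^n|^2}{d\rho}+\rho^\gamma<Q$; it is the stronger inequality $\frac{2}{d}e_{\operatorname{kin}}(\rho,m^n,M^n)+\rho^\gamma<Q$, where $e_{\operatorname{kin}}(\rho,m,M)=\frac{d}{2}\lambda_{\max}\big(\frac{m\otimes m}{\rho}-M\big)$. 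By Lemma~\ref{lem:lemmatwopointtwo}, $e_{\operatorname{kin}}(\rho,m^n,M^n)\geq \frac{|m^n|^2}{2\rho}$, with equality only when $M^n=\frac{m^n\ocircle m^n}{\rho}$. At $t=0$ you saturate $\frac{|m^n(0)|^2}{d\rho(0)}+\rho(0)^\gamma=Q(0)$, so any excess of $e_{\operatorname{kin}}$ over $\frac{|m^n|^2}{2\rho}$ forces $\frac{2}{d}e_{\operatorname{kin}}+\rho^\gamma>Q$ near $t=0$. Since $M^n(0,\cdot)=M(0,\cdot)+\delta M_n(0,\cdot)$ with $\|\delta M_n\|_{L^\infty}\to 0$, while $m^n(0,\cdot)=m(0,\cdot)+w_n$ differs from $m(0,\cdot)$ by an $O(1)$ amount, you have $M^n(0,\cdot)\not\approx \frac{m^n\ocircle m^n}{\rho}(0,\cdot)$ in general and the strict subsolution inequality is violated for small $t>0$. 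To repair this you would need $\delta M_n$ to be an $O(1)$ correction tracking $\frac{(m+\chi_n w_n)\ocircle(m+\chi_n w_n)}{\rho}-M$, which destroys your elliptic-gain argument and couples $\delta M_n$ nonlinearly to $w_n$.

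There is a second, independent difficulty you already flag but underestimate: prescribing $|w_n(x)|^2=d\rho(0,x)\varphi(x)$ exactly while keeping $w_n$ divergence-free (with, in $d=2$, the additional orthogonality $w_n\perp m(0,\cdot)$) is not a matter of ``careful patching'' but is precisely the nonlinear constraint that convex integration is designed to resolve. A single high-frequency ansatz $\sqrt{d\rho(0)\varphi}\,\xi_n$ is not divergence-free once the amplitude varies in $x$, and correcting via a projector spoils the pointwise modulus. The paper does not attempt such an explicit one-shot construction: it sets up a complete metric space $X$ of admissible $\bar m$, introduces the deficit functional $I(\bar m)=\int_{\T^d}\big(\frac{|\bar m(0)|^2}{d\rho(0)}+\rho(0)^\gamma-Q(0)\big)\dx$, and uses the perturbation property of the compressible convex integration scheme (Proposition~\ref{timezeroperturbation}, adapted from \cite{EJT20}) to produce, for any $\bar m\in X_0$ with $I(\bar m)<-\alpha$, nearby elements with $I$ increased by a fixed $\beta(\alpha)>0$, all while staying strictly inside the open constraint set by construction. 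Iterating and passing to the limit (with a mollifier trick to upgrade weak to strong $L^2$ convergence at $t=0$) yields $I(\bar m)=0$. The point is that the perturbations simultaneously adjust $m$ and $M$ in a coordinated way that your separation of scales cannot reproduce.
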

	The proof largely follows the one of Proposition 5.1 in~\cite{DLSz10}. Before we give the proof, we need some preparation. In the sequel let $(\rho,m,M,Q)$ be fixed as in the statement of Proposition~\ref{prop:proposition22}.\\
	Define the space $X_0$ as the set of vector fields $\bar m\in C([0,T)\times\mathbb{\T^d},\R^d)$ such that there exists a matrix field $\bar M\in C([0,T)\times\T^d,\mathcal{S}_0^d)$ with the following properties:
	\begin{itemize}
		\item $\partial_t\rho+\diverg \bar m=0$, $\partial_t\bar m+\diverg\bar M+\nabla Q=0$ in the sense of distributions;
		\item $\supp (\bar m-m,\bar M-M)\subset[0,T)\times\T^d$;
		\item $\frac2d e_{kin}(\rho, \bar m, \bar M)+\rho^\gamma <Q$ on $[0,T)\times \T^d$.
	\end{itemize}
	The bound on $e_{kin}$, together with the fact that $\rho$ is bounded below by a positive constant, implies that $X_0$ is a bounded subset of $C([0,T],L^2_w(\T^d))$. Hence, we observe that the closure $X$ of $X_0$ with respect to the topology of $C([0,T],L^2_w(\T^d))$ is metrizable by a metric $\mathcal{D}$, and thus $(X,\mathcal{D})$ is a complete metric space.\\		
	On $X$, let us define the functional $I:X\to\R$ by
	\begin{equation*}
		I(\bar m):=\int_{\T^d}\left(\frac{|\bar m(0,x)|^2}{d\rho(0,x)}+\rho(0,x)^\gamma-Q(0,x)\right)dx.
	\end{equation*}
	Clearly, $I\leq 0$ with equality if and only if $\frac{|\bar m(0,x)|^2}{d\rho(0,x)}+\rho(0,x)^\gamma=Q(0,x)$.\\		
	The proof of the perturbation property (Proposition 3.1) in~\cite{EJT20} then yields the following statement:
	\begin{prop}\label{timezeroperturbation}
		With the previous notation, let $T>\epsilon>0$ and $\alpha>0$. There exists $\beta>0$ such that the following is true: If $\bar m\in X_0$ such that $I(\bar m)<-\alpha$, then there exists a sequence $(\bar m_k)_{k\in\N}\subset X_0$ and corresponding matrix fields $(\bar M_k)_k\subset C([0,T)\times\T^d,\mathcal{S}_0^d)$ such that
		\begin{itemize}
			\item $\supp(\bar m-\bar m_k,\bar M-\bar M_k)\subset[0,\epsilon)\times\T^d$;
			\item $\bar m_n\to\bar m$ with respect to $\mathcal{D}$;
			\item $I(\bar m_k)\geq I(\bar m)+\beta$ for all $k\in \N$.
		\end{itemize}
	\end{prop}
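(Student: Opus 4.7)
The plan is to construct $(\bar m_k,\bar M_k)$ by adding to $(\bar m,\bar M)$ a highly oscillatory, small-amplitude perturbation compactly supported in the thin slab $[0,\epsilon)\times\T^d$, following the compressible convex integration scheme of \cite{EJT20,DLSz10}. The hypothesis $I(\bar m)<-\alpha$ says that the nonnegative slack
\[
s(x):=Q(0,x)-\rho(0,x)^{\gamma}-\frac{|\bar m(0,x)|^2}{d\rho(0,x)}
\]
satisfies $\int_{\T^d} s\,\dx\geq \alpha$. Since $(\rho,\bar m,\bar M,Q)$ is continuous on $[0,T)\times\T^d$ and $\tfrac{2}{d}\ekin(\rho,\bar m,\bar M)+\rho^\gamma<Q$ there, this slack propagates: there exist $\epsilon'\in(0,\epsilon)$ and a measurable set $U\subset[0,\epsilon')\times\T^d$ of positive measure on which $Q-\rho^\gamma-\tfrac{2}{d}\ekin(\rho,\bar m,\bar M)\geq \kappa_0$ for a threshold $\kappa_0>0$ depending only on $\alpha$, $\|\rho\|_\infty$, $\|Q\|_\infty$ and the lower bound $\eta$.

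The analytic core is the plane-wave building block for the relaxed Euler operator $\mathcal A_E$: given any $(t_0,x_0)\in(0,\epsilon')\times\T^d$ where the strict inequality holds with slack $\kappa>0$, and any small radius $r>0$, one produces $(\tilde m,\tilde M)\in C_c^\infty(B_r(t_0,x_0);\R^d\times\mathcal{S}_0^d)$ solving $\partial_t\tilde m+\diverg\tilde M=0$ and $\diverg\tilde m=0$, of $L^\infty$-amplitude $O(\sqrt{\kappa})$, such that $(\rho,\bar m+\tilde m,\bar M+\tilde M,Q)$ remains a strict subsolution on $B_r(t_0,x_0)$ and
\[
\int_{B_r(t_0,x_0)} \frac{|\tilde m|^2}{d\rho}\,\dxdt\;\geq\; c\,\kappa\, r^{d+1},
\]
with $c>0$ depending only on $\eta$. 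This is exactly the wave-cone perturbation used in the proof of the perturbation property in \cite{EJT20}, applied here without excluding the boundary $\{t=0\}$ from the cut-off support, so that a definite proportion of the mass sits on the time slice $t=0$.

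To turn a pointwise gain into a uniform gain $\beta=\beta(\alpha)$, I would apply a Vitali covering of $U\cap\{t<\epsilon/2\}$ by disjoint balls $B_{r_i}(t_i,x_i)\subset[0,\epsilon)\times\T^d$ whose union has measure comparable to $|U|$, and stack the corresponding building blocks into a single perturbation $(\tilde m_k,\tilde M_k)$ with support in $[0,\epsilon)\times\T^d$. Summing the lower bounds and projecting to the slice $t=0$ (using that each building block is a standing-wave-type profile, so that its $t=0$ trace carries a fixed fraction of its $L^2$ mass) delivers
\[
\int_{\T^d}\frac{|\tilde m_k(0,x)|^2}{d\rho(0,x)}\,\dx\;\geq\; 2\beta
\]
with $\beta>0$ depending only on $\alpha$, $\eta$, $\|\rho\|_\infty$ and $\|Q\|_\infty$, but not on the particular $\bar m\in X_0$. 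Finally, letting the oscillation frequency inside each building block tend to infinity indexes the sequence by $k$: $\tilde m_k\rightharpoonup 0$ in $L^2$, hence $\bar m_k=\bar m+\tilde m_k\to\bar m$ in $C([0,T],L^2_{\operatorname{w}}(\T^d))$, so $\bar m_k\to\bar m$ with respect to $\mathcal D$. Expanding $I(\bar m_k)-I(\bar m)=\int_{\T^d}\frac{|\tilde m_k|^2+2\bar m\cdot\tilde m_k}{d\rho}(0,x)\,\dx$, the cross term tends to $0$ by weak convergence while the quadratic term stays bounded below by $2\beta$, giving $I(\bar m_k)\geq I(\bar m)+\beta$ for $k$ large, and then for all $k$ after discarding finitely many indices.

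The main obstacle is the uniformity of $\beta$ in $\bar m$: the perturbation amplitude and the Vitali radii must be chosen using only $\alpha$ and the fixed ambient data $(\rho,Q,\eta)$, never using quantities depending on $\bar m$ itself. This forces one to bound the slack $s$ from below on a set of universal measure via a reverse-Markov argument (using the fixed upper bound on $Q-\rho^\gamma$) and to invoke the wave-cone building blocks with amplitude calibrated solely to $\kappa_0$. Once this uniformity is secured, all remaining steps are routine consequences of the linearity of the relaxed system and the strict-interior nature of the hypothesis.
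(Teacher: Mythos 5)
Your overall strategy is aligned with what the paper actually does: the paper simply defers to the perturbation property (Proposition 3.1) of \cite{EJT20}, which is already formulated for the time-zero functional $I$, and notes that the only modification is that the perturbations are supported in $[0,\epsilon)\times\T^d$. Your proposal attempts to reconstruct that argument, and the architecture (convexity/slack at $t=0$, localized $\mathcal A_E$-free building blocks, amplitude calibrated to the slack, high-frequency limit to get $\bar m_k\to\bar m$ in $\mathcal D$ while the quadratic term persists) is the right one.

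However, there is a genuine gap in the step that is supposed to convert the slack into a uniform increase of $I$: the Vitali covering of a spacetime set $U\subset[0,\epsilon)\times\T^d$ by balls $B_{r_i}(t_i,x_i)$ does not produce the lower bound you need on the $t=0$ trace $\int_{\T^d}|\tilde m_k(0,\cdot)|^2\,dx$. Any ball with $t_i>r_i$ lies strictly inside $(0,\epsilon)\times\T^d$, so the corresponding building block is compactly supported away from $t=0$ and contributes nothing to $I$; your covering does not prevent these. Even for balls that meet $\{t=0\}$, the estimate $\int_{B_{r_i}}|\tilde m|^2\gtrsim\kappa\, r_i^{d+1}$ is an interior $L^2$ bound and by itself says nothing about the restriction to the codimension-one slice $t=0$; the assertion that ``each building block is a standing-wave-type profile, so that its $t=0$ trace carries a fixed fraction of its $L^2$ mass'' is not a property of a generic compactly supported wave and has to be arranged. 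The correct localization, as in \cite{DLSz10,EJT20}, is anchored at the initial slice: one covers the \emph{spatial} set $\{x:\,s(x)\geq\kappa_0\}\subset\T^d$ (using that $I(\bar m)<-\alpha$ together with the a priori upper bound on $Q-\rho^\gamma$ gives such a set of measure $\geq c(\alpha)$), attaches to each spatial ball a thin space-time cylinder of height $<\epsilon$ with base at $t=0$, and chooses the phase of the oscillatory potential so that the perturbation is maximal on $\{t=0\}$ and is cut off in time before $t=\epsilon$. With that change the trace bound $\int_{\T^d}|\tilde m_k(0,\cdot)|^2\,dx\geq 2\beta(\alpha)$ becomes genuine, and the rest of your argument (weak convergence kills the cross term, quadratic term persists) goes through. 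As written, the trace lower bound is not established, and the covering is oriented in a way that cannot yield it.
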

	The proof proceeds as the one of Proposition 3.1 in~\cite{EJT20}, where the perturbations do not affect $\rho$ and $Q$. The only difference to said proof is that we include perturbations only up to time $\epsilon$ to ensure the support condition.
	\begin{proof}[Proof of Proposition~\ref{prop:proposition22}]
		The proof is now essentially a line-by-line copy of the proof of Proposition 5.1 in~\cite{DLSz10}, but we give it here for the reader's convenience.\\			
		Set $(\bar m_0,\bar M_0)=(m,M)$ and define recursively a sequence in $X_0$ in the following way: Let $\eta_\delta$ be a standard mollifier in $\T^d$. If $(\bar m_n,\bar M_n)$ is given, we choose a sequence $(\gamma_n)_{n\in\N}\subset\R^+$ such that $\gamma_n<2^{-n}$ for every $n\in\N$ and such that
		\begin{equation}\label{gamma}
			\|\bar m_{n}(t=0)-(\bar m_n*\eta_{\gamma_n})(t=0)\|_{L^2}<2^{-n}.
		\end{equation}
		Setting $\alpha_n:=-\frac12 I(\bar m_n)$ and denoting $\beta_n$ the corresponding perturbation improvement, we apply Proposition~\ref{timezeroperturbation} to $\bar m_n$ with $\epsilon:=2^{-n}T$; this yields $(\bar m_{n+1},\bar M_{n+1})$ such that
		\begin{enumerate}
			\item[(i)] $\bar m_{n+1}\in X_0$;
			\item[(ii)] $\supp(\bar m_{n+1}-\bar m_{n},\bar M_{n+1}-\bar M_{n})\subset[0,2^{-n}T)\times\T^d$;
			\item[(iii)] $\mathcal{D}(\bar m_{n+1},\bar m_{n})<2^{-n}$;
			\item[(iv)] $I(\bar m_{n+1})>I(\bar m_{n})+\beta_n$;
			\item[(v)] $\|(\bar m_{n+1}(t=0)-\bar m_{n}(t=0))*\eta_{\gamma_j}\|_{L^2}<2^{-n}$ for all $j\leq n$.
		\end{enumerate}
		From~(iii) we learn that the resulting sequence is Cauchy in $(X,\mathcal{D})$, hence there exists $\bar m\in X$ such that $\bar m_n\to\bar m$. This limit will still be continuous in $(0,T)\times\T^d$, since it is obtained as the limit of a sequence of continuous functions that, at every $t>0$, remains unchanged after finitely many steps, owing to~(ii).\\			
		Observe next that~(iv) immediately implies $I(\bar m_n)\nearrow 0$. If we can show strong convergence of $\bar m_n(t=0)$ to $\bar m(t=0)$ in $L^2(\T^d)$, we will be able to deduce $I(\bar m)=0$. To this end, note that, at $t=0$,
		\begin{equation}
			\|\bar m_n*\eta_{\gamma_n}-\bar m*\eta_{\gamma_n}\|_{L^2}\leq \sum_{j=0}^\infty\|\bar m_{n+j+1}*\eta_{\gamma_n}-\bar m_{n+j}*\eta_{\gamma_n}\|_{L^2}<\sum_{j=0}^\infty 2^{-n-j}=2^{-n+1},\label{telescope}
		\end{equation}  
		so that at $t=0$,
		\begin{equation*}
			\|\bar m_n-\bar m\|_{L^2}\leq \|\bar m_n-\bar m_n*\eta_{\gamma_n}\|_{L^2}+ \|\bar m_n*\eta_{\gamma_n}-\bar m*\eta_{\gamma_n}\|_{L^2}+\|\bar m-\bar m*\eta_{\gamma_n}\|_{L^2}\to 0
		\end{equation*}
		as $n\to\infty$, where we invoked~\eqref{gamma} and~\eqref{telescope}.\\			
		This shows that there exists a pair $(\bar m,\bar M)$ as required in the statement of Proposition~\ref{prop:proposition22}. However, replacing $2^{-n}$ by, say, $\frac1k 2^{-n}$ in~(iii) above, we even obtain a full sequence of such pairs that converge, as $k\to\infty$, to $m$ in $\mathcal{D}$, i.e.~in $CL^2_w$. This completes the proof.  
	\end{proof}
	We now come to the final step of the proof which uses the convex integration method to transition from subsolutions to weak solutions of (\ref{eq:euler}).
	\begin{prop}\label{prop:stepthree}
		Assume the lift $\tilde{\nu}$ of a measure-valued solution $\nu$ on $(0,T)\times \T^d$ with initial data $(\rho_0,m_0)\in L^{\infty}(\T^d)$ can be generated by a sequence $(\rho_j,m_j,M_j,Q_j)\in C((0,T)\times \T^d)$ of subsolutions with the properties:
		\begin{align*}
			(\rho_j,Q_j)&\in C([0,T]\times \T^d),\\
			m_j&\in C([0,T],L^2_{\operatorname{w}}(\T^d)),\\
			\|(\rho_j,m_j,M_j,Q_j)\|_{L^{\infty}}&\leq R,\\
			\rho_j&\geq \eta\text{ on }(0,T)\times \T^d,\\
			\frac{2}{d}e_{\operatorname{kin}}(\rho_j,m_j,M_j)+\rho_j^{\gamma}&< Q_j\text{ on }(0,T)\times 
			\T^d,\\
			e(\rho_j(0,x),m_j(0,x))&=\frac{d}{2}Q_j(0,x)\text{ for a.e.~}x\in\T^d,\\
			\int\limits_{\T^d}^{}Q_j(t,x)\dx&\leq \int\limits_{\T^d}^{}Q_j(0,x)\dx\text{ for all }t\in[0,T],\\
			\rho_j(0,\cdot)&\rightarrow \rho_0\text{ in }L^{\gamma}(\T^d),\\
			m_j(0,\cdot)&\rightarrow m_0\text{ in }L^{2}(\T^d)
		\end{align*}
		for some $R,\eta>0$.\\
		Then $\nu$ is generated by a uniformly bounded sequence of admissible weak solutions $(\bar{\rho}_j,\bar{m}_j)$ to (\ref{eq:euler}) such that
		\begin{align*}
			\|\bar{\rho}_j(t=0)-\rho_0\|_{L^{\gamma}(\T^d)}&\leq \frac{1}{j},\\
			\|\bar{m}_j(t=0)-m_0\|_{L^{2}(\T^d)}&\leq \frac{1}{j},\\
			\bar{\rho}_j&\geq \eta\text{ for a.e.~}(t,x)\in(0,T)\times \T^d.
			%\underset{t\in [0,T]}{\sup}\int\limits_{\T^d}^{}e(\bar{\rho}_j(t,x),\bar{m}_j(t,x))\dx&\leq \int\limits_{\T^d}^{}e(\bar{\rho}_j(0,x),\bar{m}_j(0,x))\dx.
		\end{align*}
	\end{prop}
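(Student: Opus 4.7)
The plan is to apply the convex integration result Theorem~\ref{theo:EJT} to each subsolution to obtain a candidate weak solution $(\bar\rho_j,\bar m_j)$, and then verify the remaining properties directly from the output together with the listed hypotheses. The only truly delicate point will be the Young measure convergence; everything else is algebra.

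\textbf{Selection and basic properties.} For each $j$, the subsolution $(\rho_j,m_j,M_j,Q_j)$ satisfies the hypotheses of Theorem~\ref{theo:EJT}: continuity on $(0,T)\times\T^d$, the weak-in-time continuity of $(\rho_j,m_j)$ in $(L^\gamma\times L^2)_w(\T^d)$, and the strict inequality $Q_j>\rho_j^\gamma+\frac{2}{d}e_{\operatorname{kin}}(\rho_j,m_j,M_j)$. Applying the theorem yields infinitely many weak solutions $(\bar\rho_j,\bar m_j)$; I select one (exploiting this flexibility in a manner addressed below). In any case Theorem~\ref{theo:EJT} gives $\bar\rho_j=\rho_j$ pointwise, $\bar m_j(0,\cdot)=m_j(0,\cdot)$, and $Q_j=\bar\rho_j^\gamma+|\bar m_j|^2/(d\bar\rho_j)$ a.e. The density bound $\bar\rho_j\geq\eta$ is immediate, and $|\bar m_j|^2\leq d\bar\rho_j Q_j\leq dR^2$ gives uniform boundedness of $\bar m_j$. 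The prescribed initial data convergence rate $1/j$ follows by passing to a subsequence, since $\|\bar\rho_j(0,\cdot)-\rho_0\|_{L^\gamma}=\|\rho_j(0,\cdot)-\rho_0\|_{L^\gamma}\to 0$ and analogously for $\bar m_j(0,\cdot)-m_0$ in $L^2$.

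\textbf{Admissibility.} The identity $\gamma-1=2/d$ yields the key algebraic collapse
\begin{equation*}
e(\bar\rho_j,\bar m_j)=\frac{d}{2}\bar\rho_j^\gamma+\frac{|\bar m_j|^2}{2\bar\rho_j}=\frac{d}{2}\bar\rho_j^\gamma+\frac{d}{2}\bigl(Q_j-\bar\rho_j^\gamma\bigr)=\frac{d}{2}Q_j
\end{equation*}
almost everywhere. Combined with the hypothesis $\int_{\T^d}Q_j(t,x)\,dx\leq\int_{\T^d}Q_j(0,x)\,dx$ and with $e(\rho_j(0,x),m_j(0,x))=\frac{d}{2}Q_j(0,x)$, this gives
\begin{equation*}
\int_{\T^d}e(\bar\rho_j,\bar m_j)(t,x)\,dx=\frac{d}{2}\int_{\T^d}Q_j(t,x)\,dx\leq\int_{\T^d}e(\bar\rho_j(0,\cdot),\bar m_j(0,\cdot))\,dx
\end{equation*}
for every $t\in[0,T]$, which is exactly the admissibility inequality for the weak solution $(\bar\rho_j,\bar m_j)$ with respect to its own initial data $(\rho_j(0,\cdot),m_j(0,\cdot))$.

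\textbf{Young measure convergence (main obstacle).} Since $\tilde\nu$ is concentrated on the image of the lifting map $\Theta$, showing $(\bar\rho_j,\bar m_j)\overset{Y}{\rightharpoonup}\nu$ is equivalent to showing $\Theta(\bar\rho_j,\bar m_j)\overset{Y}{\rightharpoonup}\tilde\nu$. The identities $\bar\rho_j=\rho_j$ and $Q_j=\bar\rho_j^\gamma+|\bar m_j|^2/(d\bar\rho_j)$ mean that the first and fourth slots of $\Theta(\bar\rho_j,\bar m_j)$ already coincide with those of the generating sequence $(\rho_j,m_j,M_j,Q_j)\overset{Y}{\rightharpoonup}\tilde\nu$, so it suffices to arrange the joint distribution of the pair $(\bar m_j,\bar m_j\ocircle\bar m_j/\bar\rho_j)$ to match that of $(m_j,M_j)$. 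This matching is the step I expect to be hardest: Theorem~\ref{theo:EJT} as stated does not itself supply it, but the underlying convex integration scheme of \cite{EJT20} proceeds by Baire category on a complete metric space of generalized subsolutions, and the weak solutions whose oscillations have a prescribed Young measure profile form a residual (hence dense) subset. Exploiting this, I would select $(\bar\rho_j,\bar m_j)$ so that $\bar m_j\ocircle\bar m_j/\bar\rho_j-M_j$ carries precisely the oscillations needed to close the Young measure match; projecting the resulting relation $\Theta(\bar\rho_j,\bar m_j)\overset{Y}{\rightharpoonup}\tilde\nu$ onto the first two components gives $(\bar\rho_j,\bar m_j)\overset{Y}{\rightharpoonup}\nu$.
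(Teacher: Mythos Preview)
Your treatment of the basic properties and admissibility is correct and matches the paper's. The gap is in the Young measure step.

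Your proposed resolution---selecting $(\bar\rho_j,\bar m_j)$ so that its lift has ``the right oscillations'', appealing to a residual set of solutions with prescribed Young measure profile---does not work. A Young measure is a property of a \emph{sequence}, not of an individual function; there is no meaningful way to choose a single $\bar m_j$ so that $\bar m_j\ocircle\bar m_j/\bar\rho_j - M_j$ carries prescribed oscillations. The Baire category argument in \cite{EJT20} produces generically many weak solutions for each fixed $j$, but gives no mechanism linking the selections across different $j$ so as to force a particular Young measure limit.

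The paper's argument is different and concrete. From Theorem~\ref{theo:EJT} one obtains, for each fixed $j$, a whole sequence $(m_j^k)_k$ of weak solutions with $m_j^k\to m_j$ in $C([0,T],L^2_{\mathrm w})$, all satisfying $|m_j^k|^2/(d\rho_j)+\rho_j^\gamma=Q_j$. The crucial observation is that this energy identity, together with the fact that $\tilde\nu$ is concentrated on the graph of $\Theta$ (so that $Q-\rho^\gamma-\tfrac{2}{d}e_{\mathrm{kin}}(\rho,m,M)=0$ and $M-m\ocircle m/\rho=0$ on $\supp\tilde\nu$), forces
\[
\Bigl|\,\|m_j^k\|_{L^2}^2-\|m_j\|_{L^2}^2\,\Bigr|\leq C\int\Bigl(Q_j-\rho_j^\gamma-\tfrac{2}{d}e_{\mathrm{kin}}(\rho_j,m_j,M_j)\Bigr)+C\int\Bigl|\tfrac{m_j\ocircle m_j}{\rho_j}-M_j\Bigr|\longrightarrow 0
\]
as $j\to\infty$, \emph{uniformly in $k$}. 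One then chooses a diagonal $k(j)$ so that also $\int m_j\cdot(m_j^{k(j)}-m_j)\to 0$ (using the weak convergence), and the polarisation identity yields $m_j^{k(j)}-m_j\to 0$ \emph{strongly} in $L^2$. Strong $L^2$ perturbations do not change Young measure limits, so $(\rho_j,m_j^{k(j)})$ generates $\nu$ because $(\rho_j,m_j)$ does. This norm-matching plus diagonalisation is the missing idea in your proposal.
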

	\begin{proof}
		Proposition \ref{theo:EJT} yields for every $j\in\N$ a sequence $\big(m_j^k\big)_{k\in\N}$ such that $(\rho_j,m_j^k)$ is a weak solution of (\ref{eq:euler}) on $(0,T)\times \T^d$, and $m_j^k\overset{k\rightarrow\infty}{\rightarrow} m_j$ in the $CL_{\operatorname{w}}^2$-topology, as well as $Q_j=\frac{2}{d}e(\rho_j,m_j^k)$ a.e.~on $(0,T)\times \T^d$. Moreover, $m_j^{k}(0,\cdot)=m_j(0,\cdot)$ for all $k$.\\
		We estimate
			\begin{align*}
				\frac{\big|m_j^k\big|^2}{d\rho_j}+\rho_j^{\gamma}=Q_j\geq \frac{2}{d}e_{\operatorname{kin}}(\rho_j,m_j,M_j)+\rho_j^{\gamma}\geq \frac{|m_j|^2}{d\rho_j}+\rho_j^{\gamma}.
			\end{align*}
			Hence, $\big|m_j^k \big|\geq |m_j|$ a.e.~on $(0,T)\times \T^d$. Therefore,
			\begin{align*}
				\left|\int\limits_{(0,T)}^{}\int\limits_{\T^d}^{}\big|m_j^{k}\big|^2-|m_j|^2\dx\dt\right|%\leq& dR\left|\int\limits_{(0,T)}^{}\int\limits_{\T^d}^{}\frac{\big|m_j^{k}\big|^2}{d\rho_j}-\frac{|m_j|^2}{d\rho_j}\dx\dt\right|\\
				= & d\int\limits_{(0,T)}^{}\int\limits_{\T^d}^{}\rho_j\left(Q_j-\rho_j^{\gamma}-\frac{2}{d}e_{\operatorname{kin}}(\rho_j,m_j,M_j)\right)\dx\dt\\
				&+d\int\limits_{(0,T)}^{}\int\limits_{\T^d}^{}\rho_j\frac{2}{d}e_{\operatorname{kin}}(\rho_j,m_j,M_j)-\rho_j\frac{|m_j|^2}{d\rho_j}\dx\dt.
		\end{align*}
		It follows as in the proof of Proposition \ref{prop:steptwo} that
		\begin{align*}
			\left\|m_j^{k}\right\|_{L^2((0,T)\times\T^d)}- \|m_j\|_{L^2((0,T)\times \T^d)}\overset{k\rightarrow\infty}{\rightarrow} 0
		\end{align*}
		uniformly in $k$.\\
		Now choose a subsequence $k(j)$ satisfying
		\begin{align*}
			\underset{t\in(0,T)}{\sup}\int\limits_{\T^d}^{}m_j\cdot\left(m_j^{k(j)}-m_j\right)\dx\leq \frac{1}{j}.
		\end{align*}
		This yields
		\begin{align*}
			\int\limits_{(0,T)}^{}\int\limits_{\T^d}^{}\left|m_j^{k(j)}-m_j\right|^2\dx\dt&=\int\limits_{(0,T)}^{}\int\limits_{\T^d}^{}\left|m_j^{k(j)}\right|^2-|m_j|^2\dx\dt-2\int\limits_{(0,T)}^{}\int\limits_{\T^d}^{}m_j\cdot\left(m_j^{k(j)}-m_j\right)\dx\dt\\
			&\leq \left\|m_j^{k}\right\|_{L^2}- \|m_j\|_{L^2}+\frac{2}{j}\rightarrow 0.
		\end{align*}
		%Denote by $\operatorname{pr}_{\rho,m}$ the projection $\R^m\ni(\rho,m,M,Q)\mapsto (\rho,m)$. Let now $\varphi\in L^1\big((0,T)\times \T^d\big)$ and $f\in C_c\big(\R^{d+1}\big)$. Then
		%\begin{align*}
		%	\int\limits_{(0,T)}^{}\int\limits_{\T^d}^{}\varphi(t,x)f(\rho_j(t,x),m_j(t,x))\dx\dt&=\int\limits_{(0,T)}^{}\int\limits_{\T^d}^{}\varphi(t,x)(f\circ \operatorname{pr}_{\rho,m})(\rho_j(t,x),m_j(t,x),M_j(t,x),Q_j(t,x))\dx\dt\\
		%	&\rightarrow \int\limits_{(0,T)}^{}\int\limits_{\T^d}^{}\varphi(t,x)\langle \nu_{(t,x)}, (f\circ \operatorname{pr}_{\rho,m}\circ \Theta)\rangle\dx\dt\\
		%	&=\int\limits_{(0,T)}^{}\int\limits_{\T^d}^{}\varphi(t,x)\langle \nu_{(t,x)}, f\rangle\dx\dt.
		%\end{align*}
		As $(\rho_j,m_j,M_j,Q_j)\overset{Y}{\rightharpoonup}\tilde{\nu}$, it is straightforward to check that $(\rho_j,m_j)\overset{Y}{\rightharpoonup}\nu$. Since $m_j^{k(j)}-m_j\overset{L^2}{\rightarrow} 0$, we therefore obtain that
		\begin{align*}
			(\bar{\rho}_j,\bar{m}_j):=\left(\rho_j,m_j^{k(j)}\right)
		\end{align*}
		generates $\nu$. Note also that
		\begin{align*}
			\left|\bar{m}_j\right|^2\leq dR\frac{\left|m_j^{k(j)}\right|^2}{d\rho_j}\leq dR(Q_j-\rho_j^{\gamma})%\leq dR\big(R+R^{\gamma}\big)
		\end{align*}
		almost everywhere on $(0,T)\times \T^d$. Thus, $\bar{m}_j$ is uniformly bounded.\\
		We have
		\begin{align*}
			\underset{t\in[0,T]}{\sup}\int\limits_{\T^d}^{}e\left(\bar{\rho}_j(t,x),\bar{m}_j(t,x)\right)\dx\leq \frac{d}{2}\int\limits_{\T^d}^{}Q_j(0,x)\dx=\int\limits_{\T^d}^{}e(\bar{\rho}_j(0,x),\bar{m}_j(0,x))\dx .
		\end{align*}
		Hence, $(\bar{\rho}_j,\bar{m}_j)$ is admissible for all $j$.\\
		Since $\bar{\rho}_j=\rho_j$ and $\bar{m}_j(t=0)=m_j(t=0)$, we immediately obtain
		\begin{align*}
			\bar{\rho}_j(t=0)\overset{L^{\gamma}}{\rightarrow}\rho_0\text{ and }\bar{m}_j(t=0)\overset{L^2}{\rightarrow}m_0.
		\end{align*}
		Choosing a subsequence in $j$, where we guarantee the above convergence to be faster than $\frac{1}{j}$, finishes the proof.
	\end{proof}
	A subsequent application of Proposition \ref{theo:stepone}, Proposition \ref{prop:steptwo}, and Proposition \ref{prop:stepthree} yields the assertion of Theorem \ref{theo:mainresult}. Ignoring the energy in the previous proofs Theorem \ref{theo:weakmainresult} follows by the very same reasoning. Note that the proof is then significantly simpler, especially Proposition \ref{prop:steptwo} is not needed.
\section{The case of measure-valued solutions consisting of two Dirac measures}\label{sect:application}
In this section we want to construct a class of non-atomic measure-valued solutions to which Theorem~\ref{theo:weakmainresult} can be applied. These measures will be a convex combination of two Dirac measures supported on weak solutions. As shown in~\cite{CFKW17} or~\cite{GW20} such measures are in general not generated by weak solutions. In these articles the fact that the involved states are not wave-cone connected plays an important role. However, we will show that if the underlying states are wave-cone connected, the considered measure-valued solution is, under some further assumptions, generated by weak solutions.\\
More concretely, we obtain the following theorem.
\begin{theo}\label{theo:applicationnonatomic}
	Let $T>0$. Let $(\rho_1,m_1),(\rho_2,m_2)\in L^{\infty}((0,T)\times \T^d)$ be two distinct bounded weak solutions of (\ref{eq:euler}) on $(0,T)\times \T^d$ with initial data $(\rho_0^1,m^1_0),(\rho^2_0,m^2_0)\in L^{\infty}(\T^d)$, respectively, and let $\lambda\in(0,1)$ satisfy the following conditions:
	\begin{itemize}
		\item There exists some $\eta>0$ such that $\rho_1,\rho_2\geq \eta$.
		%\item The energy profiles of $(\rho_1,m_1)$ and $(\rho_2,m_2)$ are the same, i.e. $e(\rho_1,m_1)=e(\rho_2,m_2)$ a.e. on $(0,T)\times\Omega$. Moreover, the joint energy profile is continuous on $[0,T]\times \T^d$.
		\item The corresponding lifted states $z_1:=\Theta(\rho_1,m_1)$ and $z_2:=\Theta(\rho_2,m_2)$ satisfy $z_1-z_2\in \underset{\omega\in \mathbb{S}^d}{\bigcup}\image\mathbb{B}_E(\omega)$ a.e.~in $(0,T)\times \T^d$.
		\item There exists some $w\in W^{2,\infty}((0,T)\times \T^d)$ and some $\sigma\in C([0,T]\times \T^d)$ such that $\lambda z_1+(1-\lambda)z_2=\sigma+\mathcal{B}_Ew$.
	\end{itemize}
	Then the measure-valued solution $\nu=\lambda\delta_{(\rho_1,m_1)}+(1-\lambda)\delta_{(\rho_2,m_2)}$ can be generated by a uniformly bounded sequence of weak solutions.
\end{theo}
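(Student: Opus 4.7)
The plan is to verify the three hypotheses of Theorem \ref{theo:weakmainresult} for the Young measure $\nu := \lambda\delta_{(\rho_1,m_1)}+(1-\lambda)\delta_{(\rho_2,m_2)}$ and then invoke that theorem directly. To begin, I would check that $\nu$ really is a measure-valued solution with initial data $(\rho_0,m_0):=\lambda(\rho_0^1,m_0^1)+(1-\lambda)(\rho_0^2,m_0^2)\in L^{\infty}(\T^d)$. Because the weak formulation from Section \ref{sect:preliminaries} is linear in $\nu$ and the nonlinearities $\rho^\gamma$, $m$, $\frac{m\otimes m}{\rho}$ are merely averaged pointwise against $\nu$, this reduces to adding the weak formulations satisfied by $(\rho_1,m_1)$ and $(\rho_2,m_2)$ with weights $\lambda$ and $1-\lambda$.

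Next I would dispatch the easy two of the three hypotheses of Theorem \ref{theo:weakmainresult}. Since $\nu_{(t,x)}$ is supported on $\{(\rho_1,m_1)(t,x),(\rho_2,m_2)(t,x)\}$ and both components satisfy $\rho_i\geq\eta$, the vacuum-exclusion condition holds with the same $\eta$. The barycenter condition $\langle\tilde\nu,\operatorname{id}\rangle=\lambda z_1+(1-\lambda)z_2=\sigma+\mathcal{B}_E w$ with $w\in W^{2,\infty}$ and $\sigma\in C$ is exactly the third assumption of Theorem \ref{theo:applicationnonatomic}.

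The key step, and the one I expect to be the main obstacle, is establishing the truncated Jensen-type inequality $\langle\tilde\nu_{(t,x)},f\rangle\geq Q^{R}_{\mathcal{B}_E}f(\langle\tilde\nu_{(t,x)},\operatorname{id}\rangle)$ for some single $R>0$ valid for a.e.\ $(t,x)$. Pointwise, $\tilde\nu_{(t,x)}=\lambda\delta_{z_1(t,x)}+(1-\lambda)\delta_{z_2(t,x)}$, so the left-hand side equals $\lambda f(z_1)+(1-\lambda)f(z_2)$ and the barycenter equals $\lambda z_1+(1-\lambda)z_2$. The wave-cone hypothesis provides, for a.e.\ $(t,x)$, some unit vector $\omega(t,x)$ with $z_1(t,x)-z_2(t,x)\in\operatorname{im}\mathbb{B}_E(\omega(t,x))$; applying Lemma \ref{lem:waveconeconvexity} pointwise yields
\begin{equation*}
\lambda f(z_1(t,x))+(1-\lambda)f(z_2(t,x))\geq Q^{C_{\mathcal{B}_E}|z_1(t,x)-z_2(t,x)|}_{\mathcal{B}_E}f(\lambda z_1(t,x)+(1-\lambda)z_2(t,x)),
\end{equation*}
with the crucial feature that the constant $C_{\mathcal{B}_E}$ depends only on the operator and not on $\omega$. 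Because $z_1,z_2\in L^{\infty}$, I can set $R:=C_{\mathcal{B}_E}(\|z_1\|_{L^\infty}+\|z_2\|_{L^\infty})$ and use the manifest monotonicity $Q^{q}_{\mathcal{B}_E}f\geq Q^{R}_{\mathcal{B}_E}f$ for $q\leq R$ (the infimum is taken over a strictly larger class when $q$ is replaced by $R$) to upgrade the pointwise bound to a uniform one with a single truncation threshold $R$.

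With all three hypotheses of Theorem \ref{theo:weakmainresult} in hand, that theorem immediately produces the required uniformly bounded sequence of weak solutions generating $\nu$, finishing the proof. The only genuinely nontrivial ingredient is the uniform control in $(t,x)$ of the truncation parameter, which is granted precisely by the $L^\infty$-bounds on $(\rho_i,m_i)$ together with the direction-independence of the constant in Lemma \ref{lem:waveconeconvexity}; every other step is either a direct consequence of linearity, a restatement of an assumption, or the monotonicity of $q\mapsto Q^{q}_{\mathcal{B}_E}f$.
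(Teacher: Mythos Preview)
Your proof is correct and follows essentially the same route as the paper: verify that $\nu$ is a measure-valued solution by linearity, check the vacuum-exclusion and barycenter conditions directly from the assumptions, and obtain the truncated Jensen inequality pointwise from Lemma~\ref{lem:waveconeconvexity} with a uniform truncation threshold coming from the $L^\infty$-bounds on $z_1,z_2$. The only cosmetic difference is that the paper takes $R=C_{\mathcal{B}}\|z_1-z_2\|_{L^\infty}$ whereas you take the slightly larger $R=C_{\mathcal{B}_E}(\|z_1\|_{L^\infty}+\|z_2\|_{L^\infty})$ and make the monotonicity $q\mapsto Q^q_{\mathcal{B}_E}f$ explicit; both choices work for the same reason.
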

\begin{proof}
	As $(\rho_1,m_1)$ and $(\rho_2,m_2)$ are weak solutions, it is immediate that $\nu$ is a measure-valued solution with initial data $\lambda(\rho^1_0,m^1_0)+(1-\lambda)(\rho^2_0,m^2_0)$. So it suffices to check the conditions on $\nu$ required to apply Theorem~\ref{theo:weakmainresult}.\\
	First, note that if $(\rho,m,M,Q)\in \supp(\tilde{\nu}_{(t,x)})$ then $\rho=\rho_j\geq \eta$ with $j\in\{1,2\}$.\\
	Next, we claim that for $R:=C_{\mathcal{B}}\|z_1-z_2\|_{L^{\infty}}$, where $C_{\mathcal{B}}$ is the constant from Lemma \ref{lem:waveconeconvexity}, it holds that the lift $\tilde{\nu}$ satisfies $\langle\tilde{\nu}_{(t,x)},f\rangle\geq Q^{R}_{\mathcal{B}_E}f(\langle\tilde{\nu}_{(t,x)},\operatorname{id}\rangle)$ for a.e.~$(t,x)\in (0,T)\times\T^d$ and all $f$ continuous:\\
	Let $f\in C(\R^m)$. By assumption we have that $z_1-z_2\in \image\mathbb{B}_E(\omega)$ almost everywhere for some $\omega\in \mathbb{S}^d$. Then Lemma \ref{lem:waveconeconvexity} implies
	\begin{align*}
		\langle \tilde{\nu},f\rangle% &=\left\langle\lambda\delta_{z_1}+(1-\lambda)\delta_{z_2},f\right\rangle
		=\lambda f(z_1)+(1-\lambda)f(z_2) % &\geq Q_{\mathcal{B}}^{\|z_1-z_2\|_{L^{\infty}}\max(\lambda,(1-\lambda))}f(\lambda z_1+(1-\lambda)z_2)
		\geq Q_{\mathcal{B}}^{C_{\mathcal{B}}\|z_1-z_2\|_{L^{\infty}}}f(\langle\tilde{\nu},\operatorname{id}\rangle).
	\end{align*}
	The remaining condition that $\langle\tilde{\nu},\operatorname{id}\rangle=\lambda z_1+(1-\lambda)z_2=\sigma+\mathcal{B}_Ew$ for some $w\in W^{2,\infty}((0,T)\times \T^d)$ and some $\sigma\in C([0,T]\times \T^d)$ is fulfilled by assumption.
\end{proof}
\begin{rem}
	Here, the choice of initial data for the diatomic measure $\nu$ above is somewhat artificial, since in the energy admissible case the appropriate choice would be $\lambda\delta_{(\rho_0^1,m_0^1)}+(1-\lambda)\delta_{(\rho_0^2,m_0^2)}$. This however only corresponds to a function if the initial data coincides.
\end{rem}
We want to give an explicit example of two weak solutions $(\rho_1,m_1)$ and $(\rho_2,m_2)$ that fulfill the conditions in Theorem \ref{theo:applicationnonatomic}.
\begin{example}\label{ex:exampleweakmaintheorem}
	Consider the case $d=2$ and fix $T>0$ arbitrary. Set $\rho_1=\rho_2=1$. Let $\alpha,\beta\in L^{\infty}(\T^1,\R)$ and define
	\begin{align*}
		m_1(t,x,y):=\begin{pmatrix}
			\alpha(y)\\
			0
		\end{pmatrix}\text{ and }m_2(t,x,y):=\begin{pmatrix}
		\beta(y)\\
		0
	\end{pmatrix}.
	\end{align*}
	Then clearly $(\rho_1,m_1)$ and $(\rho_2,m_2)$ are weak solutions of (\ref{eq:euler}) with initial data $(\rho^1_0,m^1_0)=(1,\alpha,0)$ and $(\rho^2_0,m^2_0)=(1,\beta,0)$. As the densities are equal, Remark 4.9 in \cite{GW20} yields that the corresponding lifted states $z_1,z_2$ are wave-cone connected. Observe that Lemma \ref{lem:eulerpotentialproperties} below in combination with Proposition \ref{prop:potentialcharacterization} yields that $z_1-z_2\in \underset{\omega\in\mathbb{S}^d}{\bigcup}\image\mathbb{B}_E(\omega)$. In this example we will use the explicit form of the potential $\mathcal{B}_E$, which is given in (\ref{eq:explicitformofpotential}) below for the case of two space dimensions.\\
	Now let $\lambda\in (0,1)$. It remains to check that
	\begin{align*}
		\lambda z_1+(1-\lambda)z_2=\left(1,\lambda \alpha+(1-\lambda)\beta,0,\frac{\lambda}{2} \alpha^2+\frac{1-\lambda}{2}\beta^2,0,1+\frac{\lambda}{2} \alpha^2+\frac{1-\lambda}{2}\beta^2\right)^{\operatorname{t}}
	\end{align*}
	is of the form $\sigma+\mathcal{B}_Ew$ for some $\sigma\in C([0,T]\times \T^2,\R^6)$ and some $w\in W^{2,\infty}((0,T)\times \T^2,\R^9)$.\\
	For that observe that $\psi\colon \R\times \R^2\times S_0^2\times \R\to \R^6,\ (\rho,m,M,Q)\mapsto (\rho,m,M+Q\mathbb{E}_2)$ is an isomorphism. Define
	\begin{align*}
		\sigma(t,x,y)=(\sigma_{\rho},\sigma_{m_1},\sigma_{m_2},\sigma_{M_{11}},\sigma_{M_{12}},\sigma_Q)(t,x,y):=\int\limits_{0}^{1}(\lambda z_1+(1-\lambda)z_2)(t,x,\tilde{y})\dd\tilde{y}.
	\end{align*}
	Then
	\begin{align*}
		\psi(\lambda z_1+(1-\lambda)z_2-\sigma)=\left(0,\lambda \alpha+(1-\lambda)\beta-\sigma_{m_1},0,\lambda \alpha^2+(1-\lambda)\beta^2-\sigma_{M_{11}}-\sigma_Q,0,0 \right)^{\operatorname{t}}.
	\end{align*}
	From the proof of Lemma \ref{lem:eulerpotentialproperties} below we obtain that the equation
	\begin{align*}
		\lambda z_1+(1-\lambda)z_2-\sigma=\mathcal{B}_Ew
	\end{align*}
	is satisfied for some function $w$ if and only if
	\begin{align}
		\psi(\lambda z_1+(1-\lambda)z_2-\sigma)=\begin{pmatrix}
			0&0&0&0&0&0&0&\partial^2_{y}&0\\
			0&0&0&0&\frac{1}{2}\partial^2_{y}&0&0&0&\frac{1}{2}\partial^2_{y}\\
			0&0&0&0&0&0&0&0&0\\
			0&0&0&0&0&\partial^2_{y}&0&0&0\\
			0&0&0&0&0&0&0&0&0\\
			0&0&0&0&0&0&0&0&0
		\end{pmatrix}\cdot w\label{eq:examplepotential}
	\end{align}
	since $\lambda z_1+(1-\lambda)z_2-\sigma$ is only a function of $y$. Define $a:=\lambda \alpha+(1-\lambda)\beta-\sigma_{m_1}$ and $b:=\lambda \alpha^2+(1-\lambda)\beta^2-\sigma_{M_{11}}-\sigma_Q$. Then $a,b$ have zero average. Thus, the antiderivative $\tilde{f}_a(z):=\int\limits_{0}^{z}a(z')\dz'$ is $1$-periodic and continuous. Hence, there exists some $c\in [0,1]$ such that $\tilde{f}_a(c)=\int\limits_{0}^{1}\tilde{f}_a(z)\dz$. Therefore, $f_a(z):=\tilde{f}_a(z)-\tilde{f}_a(c)=\int\limits_{c}^{z}a(z')\dz'$ is periodic and has zero average. Thus, the second antiderivative $F_a(y):=\int\limits_{0}^{y}f_a(z)\dz$ of $a(\cdot)$ is periodic. Moreover, $F_a$ is clearly bounded. Analogously, we pick a periodic second antiderivative $F_b$ of $b$. The choice
	\begin{align*}
		w(t,x,y)=(0,0,0,0,F_a(y),F_b(y),0,0,F_a(y))^{\operatorname{t}}
	\end{align*}
	now satisfies the equation (\ref{eq:examplepotential}) above and lies in $W^{2,\infty}((0,T)\times \T^2)$, hence it is a potential function with the desired properties.\\
	Note that the above example contains also the case of shear flows, since we only assumed $\alpha,\beta\in L^{\infty}$.
\end{example}
We can also find a more concrete setting for admissible measure-valued solutions consisting of two Dirac measures. The following result gives conditions for the application of Theorem \ref{theo:mainresult} in this case.
%Considering now also a more concrete setting for Theorem~\ref{theo:mainresult} we obtain the following result.
\begin{theo}\label{theo:admissiblenonatomic}
	Let $T>0$. Let $(\rho_1,m_1),(\rho_2,m_2)\in L^{\infty}((0,T)\times \T^d)$ be two distinct bounded admissible weak solutions of (\ref{eq:euler}) on $(0,T)\times \T^d$ with the same initial data $(\rho_0,m_0)\in L^{\infty}(\T^d)$ and let $\lambda\in(0,1)$ satisfy the following conditions:
	\begin{itemize}
		\item There exists some $\eta>0$ such that $\rho_1,\rho_2\geq \eta$.
		\item The energy profiles of $(\rho_1,m_1)$ and $(\rho_2,m_2)$ are equal, i.e.~$e(\rho_1,m_1)=\varepsilon=e(\rho_2,m_2)$ a.e.~on $(0,T)\times\T^d$. Moreover, the joint energy profile $\varepsilon$ is continuous on $[0,T]\times \T^d$.
		\item The corresponding lifted states $z_1:=\Theta(\rho_1,m_1)$ and $z_2:=\Theta(\rho_2,m_2)$ satisfy $z_1-z_2\in \underset{\omega\in \mathbb{S}^d}{\bigcup}\image\mathbb{B}_E(\omega)$ a.e.~in $(0,T)\times \T^d$.
		\item There exists some $w\in W^{2,\infty}((0,T)\times \T^d)$ and some $\sigma\in C([0,T]\times \T^d)$ such that $\lambda z_1+(1-\lambda)z_2=\sigma+\mathcal{B}_Ew$.
	\end{itemize}
	Then the admissible measure-valued solution $\nu=\lambda\delta_{(\rho_1,m_1)}+(1-\lambda)\delta_{(\rho_2,m_2)}$ can be generated by a uniformly bounded sequence of admissible weak solutions $(\rho_j,m_j)$ such that
	\begin{align*}
		\|\rho_j(t=0)-\rho_0\|_{L^{\gamma}(\T^d)}&\leq \frac{1}{j},\\
		\|m_j(t=0)-m_0\|_{L^{2}(\T^d)}&\leq \frac{1}{j},\\
		\rho_j&\geq \tilde{\eta}\text{ a.e.~on }(0,T)\times \T^d
	\end{align*}
	for some $\tilde{\eta}>0$.
\end{theo}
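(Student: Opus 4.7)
The plan is to verify the hypotheses of Theorem \ref{theo:mainresult} for the diatomic measure $\nu = \lambda \delta_{(\rho_1,m_1)} + (1-\lambda)\delta_{(\rho_2,m_2)}$ and to invoke it directly. The overall template will follow the proof of Theorem \ref{theo:applicationnonatomic}, adjusted to handle the two extra conditions required in the admissible version.

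The first step is to check that $\nu$ itself is an admissible measure-valued solution with initial data $(\rho_0,m_0) \in L^\infty(\T^d) \subset L^\gamma(\T^d)$. Linearity of the weak formulation in~\eqref{eq:euler}, together with the common initial data $(\rho_0,m_0)$, shows that $\nu$ solves the measure-valued version of the isentropic Euler system. For admissibility one just takes the convex combination of the two individual energy inequalities to obtain
\begin{equation*}
\int_{\T^d} \langle \nu_{(t,x)}, e\rangle \dx = \lambda \int_{\T^d} e(\rho_1,m_1)\dx + (1-\lambda) \int_{\T^d} e(\rho_2,m_2)\dx \leq \int_{\T^d} e(\rho_0,m_0)\dx.
\end{equation*}
Next I will verify the three conditions shared with Theorem \ref{theo:weakmainresult} exactly as in the proof of Theorem \ref{theo:applicationnonatomic}: the density lower bound $\rho_j \geq \eta$ passes directly to $\supp(\nu_{(t,x)})$; the Jensen-type inequality with $R := C_{\mathcal{B}} \|z_1 - z_2\|_{L^\infty}$ follows from Lemma \ref{lem:waveconeconvexity} applied to the wave-cone-connected pair $(z_1,z_2)$; and the representation $\langle \tilde{\nu},\operatorname{id}\rangle = \lambda z_1 + (1-\lambda) z_2 = \sigma + \mathcal{B}_E w$ is an assumption.

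The only genuinely new verification concerns the two extra hypotheses of Theorem \ref{theo:mainresult}. The key observation is that the monoatomic relation $\gamma - 1 = \tfrac{2}{d}$ yields the algebraic identity
\begin{equation*}
\rho^\gamma + \frac{|m|^2}{d\rho} \;=\; \frac{2}{d}\left(\frac{1}{\gamma-1}\rho^\gamma + \frac{|m|^2}{2\rho}\right) \;=\; \frac{2}{d}\, e(\rho,m),
\end{equation*}
so the $Q$-component of each lifted state $\Theta(\rho_j,m_j)$ equals $\tfrac{2}{d} e(\rho_j,m_j) = \tfrac{2}{d}\varepsilon$, using the equality of the energy profiles. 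Since $\langle \nu_{(t,x)}, e\rangle = \lambda\varepsilon + (1-\lambda)\varepsilon = \varepsilon$ as well, every $(\rho,m,M,Q) \in \supp(\tilde{\nu}_{(t,x)})$ satisfies $Q = \tfrac{2}{d}\langle \nu_{(t,x)}, e\rangle$. The continuity of $(t,x) \mapsto \langle \nu_{(t,x)}, e\rangle = \varepsilon$ on $[0,T]\times\T^d$ is precisely the second added hypothesis.

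With all hypotheses verified, Theorem \ref{theo:mainresult} produces the required uniformly bounded sequence of admissible weak solutions with the claimed convergence at the initial time and the density lower bound. I do not anticipate any genuine obstacle: the statement is essentially a specialization of Theorem \ref{theo:mainresult}, and the only non-routine element is the identity $\rho^\gamma + \tfrac{|m|^2}{d\rho} = \tfrac{2}{d}e(\rho,m)$, which is precisely the structural reason the monoatomic constraint is imposed throughout the paper.
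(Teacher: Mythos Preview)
Your proposal is correct and follows essentially the same approach as the paper's own proof: verify that $\nu$ is an admissible measure-valued solution, reuse the argument of Theorem~\ref{theo:applicationnonatomic} for the three shared hypotheses, and then check the two extra conditions of Theorem~\ref{theo:mainresult} via the monoatomic identity $\rho^\gamma+\tfrac{|m|^2}{d\rho}=\tfrac{2}{d}e(\rho,m)$ together with the equal-energy-profile assumption. Your write-up even makes the algebraic identity more explicit than the paper does, which is a nice touch.
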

\begin{proof}
	Since $(\rho_1,m_1)$ and $(\rho_2,m_2)$ are admissible weak solutions, a direct computation shows that $\nu$ is an admissible measure-valued solution with initial data $(\rho_0,m_0)$.\\
	Exactly as in the proof of Theorem \ref{theo:applicationnonatomic} we can show that $\rho\geq \eta$ for all $(\rho,m,M,Q)\in \supp(\tilde{\nu})$ and that there exists some $R>0$ such that $\langle\tilde{\nu},f\rangle\geq Q_{\mathcal{B}_E}^Rf(\langle\tilde{\nu},f\rangle)$ a.e.~on $(0,T)\times \T^d$ for all $f\in C(\R^m)$. Moreover, the condition $\langle\tilde{\nu},\operatorname{id}\rangle=\sigma+\mathcal{B}_Ew$ for some $\sigma\in C([0,T]\times \T^d)$ and some $w\in W^{2,\infty}((0,T)\times\T^d)$ holds by assumption.\\
	In order to be able to apply Theorem \ref{theo:mainresult} we observe that at a.e.~point in $(0,T)\times \T^d$ for all $(\rho,m,M,Q)\in\supp(\tilde{\nu}_{(t,x)})$ it holds that
	\begin{align*}
		Q&=\frac{|m_1|^2}{d\rho_1}+\rho_1^{\gamma}=\frac{|m_2|^2}{d\rho_2}+\rho_2^{\gamma}\\
		&=\lambda\frac{2}{d}e(\rho_1,m_1)+(1-\lambda)\frac{2}{d}e(\rho_2,m_2)=\frac{2}{d}\langle\nu_{},e\rangle.
	\end{align*}
	We also immediately see that $\langle\nu_{},e\rangle=\varepsilon$ is continuous in $(t,x)$.
\end{proof}
\begin{rem}
	In the case $d=2$ one can guarantee the existence of two admissible weak solutions $(\rho_1,m_1),(\rho_2,m_2)$ that fulfill all conditions in Theorem \ref{theo:admissiblenonatomic} except possibly for the last one. This can be achieved by the method of convex integration. For that choose $\rho_0= \eta$ for some $\eta>0$ and fix some time $T>0$. Our general assumptions imply $\gamma=1+\frac{2}{d}=2$ and $p(\rho)=\rho^2$. Then Theorem 2.1 in \cite{C14} states that there exists some bounded initial momentum $m_0$ such that there are infinitely many weak solutions $(\rho,m)$ satisfying $\rho=\rho_0=\eta$ and $|m|^2=\eta\cdot \chi$ for some smooth function in time $\chi$. Also $|m_0|^2=\eta\cdot \chi(0)$ holds. Note that $m$ is then clearly bounded. The energy profile is $\varepsilon=\frac{1}{2}\chi+\eta^2$ and thus is continuous. Note that Theorem 2.2 in \cite{C14} implies that one can choose the function $\chi$ such that the weak solutions $(\rho,m)$ above are admissible, since the notion of energy admissibility in \cite{C14} is stronger than ours. The fact that the densities of the so obtained weak solutions are equal implies that the corresponding lifted states are wave-cone connected almost everywhere by using Remark 4.10 in \cite{GW20}.\\
	It is not clear if by this convex integration procedure the solutions can be constructed such that a convex combination of their lifts is equal to $\mathcal{B}_Ew$ modulo some uniformly continuous function where $w\in W^{2,\infty}((0,T)\times \T^d)$. Although a rigorous proof of this assertion is not yet available, we assert that this is presumably correct.
\end{rem}
It is not hard, but rather lengthy to determine the relaxed Euler potential $\mathcal{B}_E$ from Proposition 4.1 in \cite{G20} explicitly. However, in the case $d=2$ this is manageable and so we conclude this section by showing some basic properties of $\mathcal{B}_E$ used above for finding an explicit application of our main results.
\begin{lem}\label{lem:eulerpotentialproperties}
	Let $d=2$. Then $\mathcal{B}_E$ is a potential for $\mathcal{A}_E$ in the sense of (\ref{eq:potentialdefinition}) and has constant rank.
\end{lem}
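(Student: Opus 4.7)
The plan is to reduce the lemma to Proposition~\ref{prop:potentialcharacterization}. That proposition (applied with $n=1$, so $\mathcal{B}_E$ of order two) characterises $\mathcal{B}_E$ as a potential for $\mathcal{A}_E$ in the symbol sense of (\ref{eq:potentialdefinition}) provided that both operators are constant-rank and every mean-free $z\in C^\infty(\T^{3})$ with $\mathcal{A}_E z=0$ is of the form $\mathcal{B}_E u$. The latter condition is precisely the content of Proposition~4.1 of \cite{G20}, so only the constant-rank property of the two operators needs to be verified.

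For $\mathcal{A}_E$, I work in the coordinates $(\rho,m_1,m_2,M'_{11},M'_{12},M'_{22})$ obtained through the isomorphism $\psi$ of Example~\ref{ex:exampleweakmaintheorem}. In these variables, the Fourier symbol at $\xi=(\xi_0,\xi_1,\xi_2)\neq 0$ reads
\[
\mathbb{A}_E(\xi)=\begin{pmatrix} 0 & \xi_0 & 0 & \xi_1 & \xi_2 & 0 \\ 0 & 0 & \xi_0 & 0 & \xi_1 & \xi_2 \\ \xi_0 & \xi_1 & \xi_2 & 0 & 0 & 0 \end{pmatrix},
\]
and a short case distinction on whether $\xi_0=0$ or not shows that this matrix has rank three for every nonzero $\xi$ (for $\xi_0\neq 0$ the first three columns are visibly linearly independent, and for $\xi_0=0$ the third row together with any two of the columns indexed by $4,5,6$ provide the missing rank). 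Hence $\mathcal{A}_E$ has constant rank and $\dim\ker\mathbb{A}_E(\xi)=3$ throughout.

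For $\mathcal{B}_E$, I use the explicit formula (\ref{eq:explicitformofpotential}) to read off the $6\times 9$ symbol $\mathbb{B}_E(\xi)$ of quadratic forms in $\xi$. The relation $\mathcal{A}_E\mathcal{B}_E\equiv 0$, built into the construction in \cite{G20}, forces the polynomial matrix identity $\mathbb{A}_E(\xi)\mathbb{B}_E(\xi)=0$, which gives $\image\mathbb{B}_E(\xi)\subseteq\ker\mathbb{A}_E(\xi)$ and the upper bound $\rank\mathbb{B}_E(\xi)\leq 3$. The remaining task is to exhibit, for every $\xi\neq 0$, three columns of $\mathbb{B}_E(\xi)$ that are linearly independent. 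Combined with the upper bound, this yields constant rank three, and Proposition~\ref{prop:potentialcharacterization} then delivers the defining identity $\ker\mathbb{A}_E(\xi)=\image\mathbb{B}_E(\xi)$ for all $\xi\in\R^{3}\setminus\{0\}$.

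The hard part is the constant-rank lower bound for $\mathcal{B}_E$: a blind analysis of all $3\times 3$ minors of a $6\times 9$ polynomial matrix is unwieldy. I expect to streamline this by observing that, after $\psi$, the system $\mathcal{A}_E$ becomes the spacetime symmetric divergence in $1+2$ variables acting on the $3\times 3$ symmetric matrix with blocks $(\rho,m)$ and $M'$; correspondingly, $\psi\circ\mathcal{B}_E$ should be identifiable with a second-order Beltrami/Airy-type stress-function potential for symmetric divergence-free tensors in three variables, whose constant rank is classical. This identification reduces the required minor check to a standard algebraic fact rather than a direct grind through the full $6\times 9$ symbol.
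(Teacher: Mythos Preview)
Your overall strategy---reduce to Proposition~\ref{prop:potentialcharacterization} and verify the two constant-rank conditions---matches the paper's. But there is a genuine gap in the step you treat as free.

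You write that the torus condition in Proposition~\ref{prop:potentialcharacterization} (every mean-free $z\in C^\infty(\T^3)$ with $\mathcal{A}_Ez=0$ admits $u\in C^\infty(\T^3)$ with $z=\mathcal{B}_Eu$) ``is precisely the content of Proposition~4.1 of \cite{G20}''. It is not: that proposition is stated on $\R^{d+1}$, not on the torus, and the paper flags exactly this point in Section~\ref{sect:proofofthemainresults} (``It is not immediate that $\mathcal{B}_E$ is a potential in the sense of~(\ref{eq:potentialdefinition})''). The construction in \cite{G20} proceeds through two Poincar\'e-type inversions of the divergence, and the standard Poincar\'e lemma does not send periodic data to periodic potentials. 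The paper's proof therefore reopens the four-step construction (the isomorphism $\psi$, a curl-inverse, a linear map $\chi$, a second curl-inverse) and checks that each step preserves periodicity and mean-freeness; for the curl-inverse this is done explicitly via the Fourier multiplier $k\mapsto i\,k\times\hat U_j(k)/|k|^2$. Without this verification, Proposition~\ref{prop:potentialcharacterization} cannot be invoked.

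Your plan for the constant-rank lower bound of $\mathcal{B}_E$ is also shaky. The Airy/Beltrami identification you propose would indeed give rank three, but you cannot identify $\psi\circ\mathcal{B}_E$ with the Beltrami stress operator merely from the fact that both annihilate $\mathcal{A}_E$: two operators with image contained in $\ker\mathbb{A}_E(\xi)$ need not have the same image unless you already know one of them spans the kernel, which is the very conclusion you want. The paper avoids this circularity by writing out $\mathbb{B}_E(\xi)$ from~(\ref{eq:explicitformofpotential}) and row-reducing (for generic $\xi$ with all coordinates nonzero, and separately for the degenerate cases) to read off rank three directly. You should either do that computation or supply an honest isomorphism between $\psi\circ\mathcal{B}_E$ and the Beltrami operator at the level of symbols, not just at the level of their targets.
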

\begin{proof}
	We first show that $\mathcal{B}_E$ satisfies (\ref{eq:potentialdefinition}):\\
	Suppose that $z=(\rho,m,M,Q)\in C^{\infty}(\T^{2+1})$ is a given periodic and average-free vector field such that $\mathcal{A}_Ez=0$. The potential $\mathcal{B}_E$ has been constructed in Proposition 4.1 in \cite{G20}. This construction consists of the following four steps. First apply the linear isomorphism already mentioned in Example \ref{ex:exampleweakmaintheorem}
	\begin{align}\label{eq:psidefinition}
		\psi\colon \R\times \R^2\times S_0^2\times \R\to S^3,\ (\rho,m,M,Q)\mapsto (\rho,m,M+Q\mathbb{E}_2).
	\end{align}
	Then use a variant of Poincar\'e's lemma on the divergence-free vector-field $U=\psi(z)$. Next apply a certain bijective linear transformation $\chi$ and finally Poincar\'e's lemma is used one more time.\\
	Note that in the case $d=2$ the step including Poincar\'e's lemma boils down to finding a vector field $u\in C^{\infty}(\T^{2+1},\R^3)$ such that $\curl u=f$ if $f\in C^{\infty}(\T^{2+1},\R^3)$ satisfies $\operatorname{div}f=0$. Since $z$ lives on the torus and is average-free, the divergence-free matrix-field $U=\psi(z)$ has the same properties. Define for all $x\in\T^3$
	\begin{align*}
		w_j(x):=\sum\limits_{k\in\Z^3\backslash\{0\}}^{}i\frac{k\times \hat{U}_j(k)}{|k|^2}e^{2\pi ixk},
	\end{align*}
	where $U_j$ is the vector consisting of the $j$-th row of $U$, $j=1,2,3$. Since $U_j$ is smooth, so is $w_j$. Also the periodicity is clear. Moreover, because $U_j$ is divergence-free and average-free, we obtain for all $x\in \T^3$
	\begin{align*}
		\curl w_j(x)=-\sum\limits_{k\neq 0}^{}\frac{k\times(k\times \hat{U}_j(k))}{|k|^2}e^{2\pi ixk}=\sum\limits_{k\neq 0}^{}\hat{U}_j(k)e^{2\pi ixk}=U_j(x).
	\end{align*}
	It is also straightforward to check that $w_j$ is again average-free.\\
	Now the subsequent linear isomorphism $\chi$ preserves again average-freeness and periodicity. An analogous discussion as above implies that the second usage of this variant of Poincar\'e's lemma yields again periodic smooth vector-fields. Vice versa, from the construction and the fact that images of the $\operatorname{curl}$-operator are $\operatorname{div}$-free we get that $\mathcal{A}\mathcal{B}u=0$ for all $u\in C^{\infty}(\T^3)$. Therefore, using Proposition \ref{prop:potentialcharacterization} we can infer that $\mathcal{B}_E$ is a potential in the sense of (\ref{eq:potentialdefinition}) if $\mathcal{B}_E$ and $\mathcal{A}_E$ are constant rank operators. We already know that $\mathcal{A}_E$ has constant rank, see Remark~\ref{rem:constantrank}. In order to check that $\mathcal{B}_E$ also has constant rank we make the following observations:\\
	If one carries out the four steps in the construction of $\mathcal{B}_E$ for $d=2$ explicitly, the resulting operator is
	\begin{align}
		\mathcal{B}_E=\psi^{-1}\circ\begin{pmatrix}
			\partial_x^2 & \partial_x\partial_y & 0 & 0 & 0 & 0& \partial_x\partial_y & \partial_y^2 & 0\\
			-\partial_t\partial_x & -\frac{1}{2}\partial_t\partial_y & \frac{1}{2}\partial_x\partial_y & \frac{1}{2}\partial_x\partial_y & \frac{1}{2}\partial_y^2 & 0 & -\frac{1}{2}\partial_t\partial_y & 0 & \frac{1}{2}\partial_y^2\\
			0 & -\frac{1}{2}\partial_t\partial_x & -\frac{1}{2}\partial_x^2 & -\frac{1}{2}\partial_x^2 & -\frac{1}{2}\partial_x\partial_y & 0 & -\frac{1}{2}\partial_t\partial_x & -\partial_t\partial_y & -\frac{1}{2}\partial_x\partial_y\\
			\partial_t^2 & 0 & -\partial_t\partial_y & -\partial_t\partial_y & 0 & \partial_y^2 & 0 & 0 & 0\\
			0 & \frac{1}{2}\partial_t^2 & \frac{1}{2}\partial_t\partial_x & \frac{1}{2}\partial_t\partial_x & -\frac{1}{2}\partial_t\partial_y & -\partial_x\partial_y & \frac{1}{2}\partial_t^2 & 0 &-\frac{1}{2}\partial_t\partial_y\\
			0 & 0 & 0 & 0 & \partial_t\partial_x & \partial_x^2 & 0 & \partial_t^2 & \partial_t\partial_x
		\end{pmatrix}.\label{eq:explicitformofpotential}
	\end{align}
	Let $\omega\in \mathbb{S}^2$ be such that all three entries $\omega_t,\omega_x,\omega_y$ are non-zero. Then a straightforward linear algebra calculation shows that the matrix $\mathbb{B}_E(\omega)$ can be transformed by elementary row operations into
	\begin{align*}
		\begin{pmatrix}
			\frac{\omega_x}{\omega_y} & 1 & 0 & 0 & 0 & 0 & 1 & \frac{\omega_y}{\omega_x} & 0\\
			0 & 0 & 0 & 0 & 0 & 0 & 0 & 0 & 0\\
			0 & 0 & 0 & 0 & 0 & 0 & 0 & 0 & 0\\
			-\frac{\omega_t}{\omega_y} & 0 & 1 & 1 & 0 & -\frac{\omega_y}{\omega_t} & 0 & 0 & 0\\
			0 & 0 & 0 & 0 & 0 & 0 & 0 & 0 & 0\\
			0 & 0 & 0 & 0 & 1 & \frac{\omega_x}{\omega_t} & 0 & \frac{\omega_t}{\omega_x} & 1\\
		\end{pmatrix},
	\end{align*}
	which clearly has rank equal to three. In the case of one or two entries of $\omega$ being zero we also obtain $\rank(\mathbb{B}_E(\omega))=3$, which is an even simpler observation. Altogether, we conclude that $\mathcal{B}_E$ has constant rank equal to three. 
\end{proof}
For the generalization of the previous lemma to higher dimensions, the only difficulty is to explicitly determine the form of $\mathcal{B}_E$ and show that it has constant rank. This, however, turns out to be a very tedious calculation and so a rigorous proof is still missing, cf.~also Remark~\ref{rem:constantrank}.
\section{Necessary Conditions}\label{sect:necessaryconditions}
The last section is dedicated to finding necessary conditions that are fulfilled by measure-valued solutions which are generated by weak solutions. There is already a result on this topic proved by Chiodaroli et al., cf.~Theorem 3 in \cite{CFKW17}. We will adjust the necessary conditions given there to our situation and gain some improvements due to the uniform boundedness of our generating sequence. Actually, we will also consider the case of vanishing viscosity sequences as generating sequences. Including vanishing viscosity limits in Theorem \ref{theo:weakmainresult} and Theorem \ref{theo:mainresult} unfortunately cannot be done in an obvious way and might be a subject for future work. 
\begin{theo}\label{theo:necessaryconditions}
	Let $T>0$. Suppose $\nu$ is a Young measure which is generated by a sequence of functions $(\rho_n,m_n)$ over $(0,T)\times \T^d$ satisfying:
	\begin{itemize}
		\item There exists $M>0$ such that $\|(\rho_n,m_n)\|_{L^{\infty}((0,T)\times \T^d)}\leq M$ for all $n\in\N$.
		\item The sequence $(\rho_n,m_n)$ is a vanishing viscosity sequence with the property that $\mu_n\left\|\nabla_x \frac{m_n}{\rho_n}\right\|_{L^2((0,T)\times \T^d)}\rightarrow 0$ or consists of weak solutions of (\ref{eq:euler}).
		\item The initial values converge weakly, i.e.~$(\rho_n^0,m_n^0)\rightharpoonup (\rho_0,m_0)$ in $(L^{\gamma}\times L^2)(\T^d)$ for some $(\rho_0,m_0)\in (L^{\gamma}\times L^2)(\T^d)$.
		\item There exists some $\eta>0$ such that the densities satisfy $\rho_n\geq \eta$ for all $n\in\N$.
	\end{itemize}
	Then $\nu$ is a measure-valued solution of (\ref{eq:euler}) on $(0,T)\times \T^d$ with initial data $(\rho_0,m_0)\in L^{\infty}(\T^d)$ satisfying the properties:
	\begin{itemize}
		\item For a.e.~$(t,x)\in(0,T)\times \T^d$ it holds that $\supp\left(\tilde{\nu}_{(t,x)} \right)\subset \{(\rho,m,M,Q)\,:\,\rho\geq \eta \}\cap B_R(0)$ for some $R>0$.
		\item The barycenter of the lift satisfies $\langle\tilde{\nu},\operatorname{id}\rangle\in L^{\infty}((0,T)\times \T^d)$ and is $\mathcal{A}_E$-free. 
		\item For a.e.~$(t,x)\in(0,T)\times \T^d$ and all $f\in C(\R^+\times \R^d\times S_0^d\times \R^+)$ with $|f(z)|\leq C(1+|z|^2)$ for some $C>0$ it holds that $\langle\tilde{\nu}_{(t,x)},f\rangle\geq Q_{\mathcal{A}_E}f(\langle\tilde{\nu}_{(t,x)},\operatorname{id}\rangle)$.
	\end{itemize}
	If we additionally assume that $(\rho_n,m_n)$ is a sequence of admissible weak solutions or an energy admissible vanishing viscosity sequence with $\|(\rho_n^0,m_n^0)-(\rho^0,m^0)\|_{L^{\gamma}\times L^2}\rightarrow 0$, then $\nu$ is an admissible measure-valued solution of (\ref{eq:euler}) with initial data $(\rho^0,m^0)$.
\end{theo}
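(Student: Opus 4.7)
The plan is to verify the four listed properties in turn by passing to the limit in the weak formulations, exploiting both the uniform $L^\infty$ bound and the lower bound $\rho_n\geq \eta$.

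\textbf{Step 1 (measure-valued solution and support).} I would begin by noting that on the region $\{\rho\geq \eta\}\cap B_M(0)$ the nonlinearities $(\rho,m)\mapsto m\otimes m/\rho$ and $\rho\mapsto \rho^\gamma$ are continuous and bounded. Hence by the Fundamental Theorem of Young Measure Theory (e.g.\ Theorem 2.2 in \cite{FM99}), the sequences $m_n\otimes m_n/\rho_n$ and $\rho_n^\gamma$ converge weakly$^*$ in $L^\infty$ to $\langle\nu,m\otimes m/\rho\rangle$ and $\langle\nu,\rho^\gamma\rangle$ respectively. In the vanishing viscosity case, the viscous term vanishes in the distributional sense, since $\mu_n\|\nabla_x (m_n/\rho_n)\|_{L^2}\to 0$ implies $\mu_n\operatorname{div}\mathbb{S}(\nabla(m_n/\rho_n))\to 0$ in $H^{-1}$. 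Combining this with the weak convergence of the initial data and passing to the limit in the weak formulation yields that $\nu$ is a measure-valued solution with initial data $(\rho_0,m_0)$. For the support properties, the uniform bound $\|(\rho_n,m_n)\|_{L^\infty}\leq M$ together with $\rho_n\geq \eta$ transfers directly to $\supp\nu_{(t,x)}\subset\{\rho\geq \eta\}\cap \overline{B_M(0)}$ by testing against continuous functions vanishing on this set. Since $\Theta$ is continuous and bounded on this region, the lifted measure $\tilde{\nu}$ is generated by $\Theta(\rho_n,m_n)$, and its support is contained in $\{\rho\geq \eta\}\cap B_R(0)$ for some $R>0$ depending only on $M$ and $\eta$.

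\textbf{Step 2 ($\mathcal{A}_E$-free barycenter).} The lift $\Theta(\rho_n,m_n)=(\rho_n,m_n,m_n\ocircle m_n/\rho_n,\rho_n^\gamma+|m_n|^2/(d\rho_n))$ satisfies the linear relaxed system \eqref{eq:linearizedeuler} up to the viscous term. The uniform boundedness implies $\Theta(\rho_n,m_n)\overset{*}{\rightharpoonup}\langle\tilde{\nu},\operatorname{id}\rangle$ in $L^\infty$, so in particular $\langle\tilde{\nu},\operatorname{id}\rangle\in L^\infty((0,T)\times\T^d)$. Since $\mathcal{A}_E$ is linear of first order, the identity $\mathcal{A}_E\Theta(\rho_n,m_n)=o(1)$ in distributions passes to the limit, giving $\mathcal{A}_E\langle\tilde{\nu},\operatorname{id}\rangle=0$.

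\textbf{Step 3 (Jensen-type inequality).} This is the main technical step. The sequence $\Theta(\rho_n,m_n)$ generates $\tilde{\nu}$, is uniformly bounded in $L^\infty$, and is asymptotically $\mathcal{A}_E$-free. Along the lines of the necessity part of Theorem 4.1 in \cite{FM99} (or Theorem 3 in \cite{CFKW17}), a localization argument at Lebesgue points of $(t,x)\mapsto\langle\tilde{\nu}_{(t,x)},g\rangle$ for a countable family of test functions $g$ shows that the value of the limit against any continuous $f$ with at most quadratic growth satisfies
\begin{equation*}
\langle\tilde{\nu}_{(t_0,x_0)},f\rangle\geq Q_{\mathcal{A}_E}f(\langle\tilde{\nu}_{(t_0,x_0)},\operatorname{id}\rangle)
\end{equation*}
for a.e.\ $(t_0,x_0)$. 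Quadratic growth of $f$ is handled via the uniform $L^\infty$ bound which provides equi-integrability and allows the Young measure representation of limits to extend from $C_0$ to functions with quadratic growth. In the viscous case, one absorbs the vanishing viscous defect into the test functions by passing through a mollified, approximately $\mathcal{A}_E$-free rescaling of $\Theta(\rho_n,m_n)$ around $(t_0,x_0)$.

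\textbf{Step 4 (admissibility).} In the admissible case, the energy $e$ is convex on $(0,\infty)\times\R^d$ and continuous on the compact region $\{\rho\geq\eta\}\cap\overline{B_M(0)}$, so
\begin{equation*}
\int_{\T^d}\langle\nu_{(t,x)},e\rangle\,\dx\leq\liminf_{n\to\infty}\int_{\T^d}e(\rho_n(t,x),m_n(t,x))\,\dx\leq \lim_{n\to\infty}\int_{\T^d}e(\rho_n^0,m_n^0)\,\dx=\int_{\T^d}e(\rho^0,m^0)\,\dx,
\end{equation*}
where the last equality uses the strong convergence of initial data in $L^\gamma\times L^2$ together with the continuity of $e$ on bounded subsets of $\{\rho\geq\eta\}\times\R^d$. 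This is valid for a.e.\ $t\in(0,T)$. The main obstacle is Step 3, where the localization to Lebesgue points must be combined with a mollification that preserves the asymptotic $\mathcal{A}_E$-freeness while allowing test against continuous functions of quadratic growth; all other steps are standard consequences of the uniform bounds and the Fundamental Theorem of Young Measure Theory.
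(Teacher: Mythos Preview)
Your proposal is correct and follows essentially the same route as the paper. The only notable simplification is in your Step~3: rather than redoing the localization argument, the paper invokes Proposition~3.8 in \cite{FM99} directly, since the lifted sequence $z_n=\Theta(\rho_n,m_n)$ is bounded in $L^\infty\subset L^2$ and satisfies $\mathcal{A}_E z_n\to 0$ in $W^{-1,2}$ (the latter computed exactly as you indicate via the viscous term estimate), which are precisely the hypotheses of that proposition; so what you flag as the ``main obstacle'' is in fact a black-box citation.
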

\begin{rem}
	Note that the second assumption $\mu_n\left\|\nabla_x \frac{m_n}{\rho_n}\right\|_{L^2}\rightarrow 0$ is automatic in the case that $(\rho_n,m_n)$ is an vanishing viscosity sequence satisfying the energy inequality.
\end{rem}
\begin{rem}
	Here, we introduced the \textrm{quasiconvex envelope} $Q_{\mathcal{A}_E}f$ of a function $f$. This is defined for a continuous function $f$ as
	\begin{align*}
		Q_{\mathcal{A}_E}f(z)=\inf\left\{\int\limits_{\T^{d+1}}^{}f(z+w(t,x))\dx\dt\,:\, w\in C^{\infty}(\T^{d+1})\cap \ker\mathcal{A}_E,\ \underset{\T^{d+1}}{\dashint}w\,\dx\dt=0 \right\}
	\end{align*}
	for all $z\in \R^+\times \R^d\times S_0^d\times \R^+$. Note that this definition can be found in a more general setting in \cite{FM99}. Due to the specific properties of $\mathcal{B}_E$ we investigated in the case $d=2$ in Section \ref{sect:application}, we can actually formulate this in terms of the potential $\mathcal{B}_E$ using Corollary 5 in \cite{R18} by
	\begin{align*}
		Q_{\mathcal{A}_E}f(z)=Q_{\mathcal{B}_E}f(z):=\inf\left\{ \int\limits_{\mathcal{Q}}^{}f(z+\mathcal{B}_Ew(t,x))\dx\dt\,:\, w\in C_c^{\infty}(\mathcal{Q}) \right\}.
	\end{align*}
	This is the reason for us to call $Q_{\mathcal{B}_E}^qf$ the truncated quasiconvex envelope.
\end{rem}
\begin{proof}[Proof of Theorem \ref{theo:necessaryconditions}]
	Suppose $(\rho_n,m_n)$ is a vanishing viscosity sequence generating $\nu$. The case of a generating sequence consisting of weak solutions is simpler and follows by basically the same arguments as below, hence this case is omitted.\\
	Since $(\rho_n,m_n)\overset{Y}{\rightharpoonup}\nu$, it holds that $z_n:=\Theta(\rho_n,m_n)\overset{Y}{\rightharpoonup}\tilde{\nu}$. As $\rho_n$ is bounded away from zero by $\eta>0$, we infer that $\|z_n\|_{L^{\infty}((0,T)\times \T^d)}\leq R$ for some $R>0$. Thus, standard Young measure theory implies that
	\begin{align*}
		\supp(\tilde{\nu}_{(t,x)})\subset \{(\rho,m,M,Q)\,:\, \rho \geq \eta\}\cap B_R(0)
	\end{align*}
	for a.e.~$(t,x)\in (0,T)\times \T^d$. Moreover, we have $z_n\overset{*}{\rightharpoonup}\langle\tilde{\nu},\operatorname{id}\rangle$ in $L^{\infty}((0,T)\times \T^d)$. This shows the first two bullet points.\\
	In the proof of Theorem 4.10 in \cite{GW20} it is shown that the vanishing viscosity sequence $(z_n)$ satisfies
	\begin{align}
		\mathcal{A}_Ez_n\rightarrow 0\text{ in }W^{-1,2}((0,T)\times \T^d).\label{eq:vanishingviscosityconvergence}
	\end{align}
	For the reader's convenience we also give the proof here:\\
	Let us denote by $\mu_n$ the sequence of viscosity constants tending to zero. Then
	\begin{align*}
		\|\mathcal{A}_Ez_n\|_{W^{-1,2}((0,T)\times \T^d)}&=\underset{\begin{subarray}{c}
				\|\varphi\|_{W^{1,2}_0((0,T)\times \T^d)}\leq 1, \\
				\varphi\in C_c^{\infty}((0,T)\times \T^d)
		\end{subarray}}{\sup}\left|\left\langle \mu_n \operatorname{div}\mathbb{S}\left(\nabla \frac{m_n}{\rho_n}\right),\varphi\right\rangle\right|\\
		&=\underset{\begin{subarray}{c}
				\|\varphi\|_{W^{1,2}_0((0,T)\times \T^d)}\leq 1, \\
				\varphi\in C_c^{\infty}((0,T)\times \T^d)
		\end{subarray}}{\sup}\left|\int\limits_{(0,T)\times \T^d}^{}\mu_n\mathbb{S}\left(\nabla \frac{m_n}{\rho_n}\right):\nabla\varphi\dx\dt \right|\\
		% &\leq \sqrt{\mu_n}\sqrt{\int\limits_{(0,T)\times \T^d}^{}\mu_n\left|\mathbb{S}\left(\nabla \frac{m_n}{\rho_n}\right)\right|^2\dx\dt}\cdot 1\\
		&\leq C\mu_n\left\|\nabla_x \frac{m_n}{\rho_n}\right\|_{L^2((0,T)\times \T^d)}\\
		% &\leq C\sqrt{\mu_n}\tilde{M}\\
		&\rightarrow 0.
	\end{align*}
	Here $\mathbb{S}(\nabla \cdot)$ denotes viscosity stress tensor from the compressible Navier-Stokes equations. This shows (\ref{eq:vanishingviscosityconvergence}).\\
	Combining $z_n\overset{*}{\rightharpoonup}\langle \tilde{\nu},\operatorname{id}\rangle$ and (\ref{eq:vanishingviscosityconvergence}) yields that $\mathcal{A}_E\langle\tilde{\nu},\operatorname{id}\rangle=0$. The support properties of $\tilde{\nu}$ and Lemma 2.1 in \cite{EJT20} then imply that $\nu$ is a measure-valued solution of (\ref{eq:euler}) with initial data $(\rho_0,m_0)$. Note that $(\rho_0,m_0)\in L^{\infty}(\T^d)$ due to the uniform boundedness of $(\rho_n,m_n)$.\\
	Now Proposition 3.8 in \cite{FM99} yields that
	\begin{align*}
		\langle\tilde{\nu}_{(t,x)},f\rangle\geq Q_{\mathcal{A}_E}f(\langle\tilde{\nu}_{(t,x)},\operatorname{id}\rangle)
	\end{align*}
	for a.e.~$(t,x)\in (0,T)\times \T^d$ and for all $f\in C(\R^m)$ with $|f(z)|\leq C(1+|z|^2)$ for some $C>0$.% Then Corollary 1 in \cite{R18} implies that for such $f$ it holds that
	%\begin{align*}
	%	\langle\tilde{\nu}_{(t,x)},f\rangle\geq Q_{\mathcal{B}_E}f(\langle\tilde{\nu}_{(t,x)},\operatorname{id}\rangle)
	%\end{align*}
	%for a.e. $(t,x)\in (0,T)\times \T^d$.
	\\
	\\
	We now assume that the functions $(\rho_n,m_n)$ are energy admissible with $\|(\rho_n^0,m_n^0)-(\rho^0,m^0)\|_{L^{\gamma}\times L^2}\rightarrow 0$. Note that in the case that $(\rho_n,m_n)$ is a vanishing viscosity sequence, we have that $\sqrt{\mu_n}\left\|\nabla_x \frac{m_n}{\rho_n}\right\|_{L^2((0,T)\times \T^d)}\leq \tilde{M}$ for some $\tilde{M}>0$, which follows from the energy inequality and Korn's inequality. Thus, $\mu_n\left\|\nabla_x \frac{m_n}{\rho_n}\right\|_{L^2}\rightarrow 0$.\\
	It only remains to check the energy inequality for $\nu$. %As an auxiliary consideration we observe that for all $n\in\N$
	%\begin{align*}
	%	\int\limits_{0}^{T}\langle\rho_n(t,\cdot)\varphi\rangle\psi(t)\dt\geq \eta\|\varphi\|_{L^1}\|\psi\|_{L^1}
	%\end{align*}
	%for all $\varphi\in C^{\infty}(\T^d,\R_+)$ and for all $\psi\in C^{\infty}((0,T),\R_+)$. Note that $\langle\rho_n(t),\varphi\rangle$ is continuous in $t$ by Proposition \ref{prop:appendixa}. Hence, choosing for $\psi$ a Dirac sequence yields that $\langle\rho_n(t),\varphi\rangle\geq \eta$ for all $t$. Then the Lebesgue Differentiation Theorem yields that $\rho_n^0\geq \eta$ a.e. on $\T^d$ by choosing for $\varphi$ a Dirac sequence located at the Lebesgue points of $\rho_n^0$.\\
	The energy density function $e$ is continuous on $[\eta,\infty)\times \R^{d}$. Thus,% the boundedness of $(\rho_n,m_n)$ and the fact that $\rho_n\geq \eta$ together with the Fundamental Theorem of Young Measure Theory implies that
	\begin{align}
		e(\rho_n,m_n)\rightharpoonup \langle\nu,e\rangle\text{ in }L^1((0,T)\times \T^d).\label{eq:testingtheinequality}
	\end{align}
	Since $\rho_n\geq \eta$ and $\rho_n\in CL^{\gamma}_{\operatorname{w}}$, it holds that $\rho_n^0\geq \eta$. Therefore, the convergence of $\rho_n^0$ and $m_n^0$ implies
	\begin{align*}
		\int\limits_{\T^d}^{}e(\rho_n^0,m_n^0)\dx\rightarrow \int\limits_{\T^d}^{}e(\rho^0,m^0)\dx.
	\end{align*}
	This together with (\ref{eq:testingtheinequality}) yields the energy inequality for $\nu$ at a.e.~time $t$, which finishes the proof.
\end{proof}
\begin{rem}\label{rem:twodimensionsnecessary}
	Without loss of generality set $T=1$. For general $T>0$ use a reparametrization.\\
	We investigated the potential $\mathcal{B}_E$ for $d=2$ more deeply in the previous section. In particular, we obtained that $\mathcal{B}_E$ is a constant rank potential for $\mathcal{A}_E$ in the sense of (\ref{eq:potentialdefinition}). Let $1<p<\infty$. By a Calder\'on-Zygmund argument there exists a constant $C_p>0$ such that for all $z\in L^p(\T^3)=L^p((0,1)\times \T^2)$ with $\mathcal{A}_Ez=0$ and $\int\limits_{\T^3}^{}z\,\dx\dt=0$ there exists some potential $u$, i.e.~$\mathcal{B}_Eu=z$, such that the inequality
	\begin{align*}
		\|u\|_{W^{2,p}(\T^3)}\leq C_p\|z\|_{L^p(\T^3)}
	\end{align*}
	holds, cf.~Theorem A.2 in \cite{SkW21}.\\
	With this at hand we can further specify the form of $\langle\tilde{\nu},\operatorname{id}\rangle$ in Theorem \ref{theo:necessaryconditions}:\\
	Since $\langle\tilde{\nu},\operatorname{id}\rangle\in L^{\infty}((0,1)\times\T^2)\subset L^p((0,1)\times \T^2)$ is $\mathcal{A}_E$-free, the above argumentation yields for all $1<p<\infty$ some potential $w_p\in W^{2,p}(\T^3)$ satisfying
	\begin{align*}
		\langle\tilde{\nu},\operatorname{id}\rangle=\int\limits_{0}^{1}\int\limits_{\T^2}^{}\langle\tilde{\nu},\operatorname{id}\rangle\dx\dt+\mathcal{B}_Ew_p.
	\end{align*}
	Note that $W^{2,p}(\T^3)\subset W^{2,p}((0,1)\times \T^2)$. Thus, $w_p\in W^{2,p}((0,1)\times \T^2)$ and we can identify $\sigma=\int\limits_{0}^{1}\int\limits_{\T^2}^{}\left\langle\Theta_{\sharp}\nu,\operatorname{id}\right\rangle\dx\dt$ to be actually constant.
\end{rem}
To conclude this paper let us compare the sufficient conditions given in Theorems~\ref{theo:weakmainresult} and \ref{theo:mainresult} for a measure-valued solution to be generated by weak solutions with the necessary conditions from Theorem \ref{theo:necessaryconditions}.
\begin{rem}\label{rem:comparison}
	From our sufficient conditions, we are only able to decide for compactly supported measure-valued solutions with some a priori uniform bounds (in form of the specific form of the barycenter and the uniform bound in the quasiconvex envelope) if they are generated by weak solutions. However, the necessary conditions from above do not provide us with such bounds. This gap, originates from the breakdown of the Calder\'on-Zygmund estimate $\|u\|_{W^{2,p}}\leq C_p\|z\|_{L^p}$ in the case $p=\infty$ for $\mathcal{B}_Eu=z$, cf.~e.g.~\cite{LK64}. Thus, even if we were able to infer an $L^{\infty}$-bounded $\mathcal{A}_E$-free sequence generating the lifted measure, e.g.~by the results of \cite{BGS21}, the uniform $L^{\infty}$-bounds cannot be carried over to the level of potentials. One needs to overcome those technical difficulties for generalizing our proof to a full characterization of compactly supported measure-valued solutions generated by weak solutions. At the current state, this seems to be quite difficult, and it might even be possible that a new strategy of proof is needed for this.\\
	Note that one could argue that physically relevant measure-valued solutions are in fact those who come from a sequence of weak solutions or a vanishing viscosity sequence, cf.~\cite{BTW12} and \cite{GW20}.\\
	In the case of an admissible measure-valued solution $\nu$ our assumption $Q=\frac{2}{d}\langle\nu,e\rangle$ on the support of $\nu$ and the continuity of $\langle\nu,e\rangle$ are certainly too strong, since there is no reason for a general admissible measure-valued solution to fulfill them. Weakening these conditions might be worth some further study.
\end{rem}
%\section*{Acknowledgements}

\end{document}